\newcommand{\interp}{in\-ter\-po\-la\-tion}
\theoremstyle{plain}
\newtheorem{lemma}{Lemma}[section]
\newtheorem{theorem}[lemma]{Theorem}
\newtheorem{proposition}[lemma]{Proposition}
\newtheorem{corollary}[lemma]{Corollary}
\newtheorem*{stat}{\name}
\newcommand{\name}{testing}
\theoremstyle{definition}
\newtheorem{definition}[lemma]{Definition}
\newtheorem{example}[lemma]{Example}
\newtheorem{problem}{Problem}
\theoremstyle{remark}
\newtheorem{notation}[lemma]{Notation}
\newtheorem*{note}{Note}
\newcommand{\qedc}{{\qed}~{\rm Claim~{\theclaim}.}}
\newcommand{\qedsc}{{\qed}~{\rm Claim.}}
\numberwithin{equation}{section}
\numberwithin{figure}{section}
\newcommand{\pup}[1]{\textup{(}{#1}\textup{)}}
\newcommand{\jirr}{join-ir\-re\-duc\-i\-ble}
\newcommand{\mirr}{meet-ir\-re\-duc\-i\-ble}
\newcommand{\jsd}{join-sem\-i\-dis\-trib\-u\-tive}
\newcommand{\msd}{meet-sem\-i\-dis\-trib\-u\-tive}
\newcommand{\Msdy}{Meet-sem\-i\-dis\-trib\-u\-tiv\-i\-ty}
\newcommand{\contr}{a contradiction}
\newcommand{\ardef}{\underset{\scriptstyle\mathrm{def.}}{\Longleftrightarrow}}
\newcommand{\pI}[1]{\bigl({#1}\bigr)}
\newcommand{\set}[1]{\{#1\}}
\newcommand{\setm}[2]{\set{#1\mid#2}}
\newcommand{\seq}[1]{\langle{#1}\rangle}
\newcommand{\vecm}[2]{(#1\mid#2)}
\newcommand{\oce}[1]{\left]{#1}\right]_{\be}}
\newcommand{\coe}[1]{\left[{#1}\right[_{\be}}
\newcommand{\cce}[1]{\left[{#1}\right]_{\be}}
\newcommand{\so}[1]{\boldsymbol{\delta}_{#1}}
\newcommand{\orth}[1]{{#1}^{\perp}}
\newcommand{\oorth}[1]{{#1}^{\perp\perp}}
\DeclareMathOperator{\isol}{isol}
\DeclareMathOperator{\Con}{Con}
\DeclareMathOperator{\con}{con}
\DeclareMathOperator{\Bip}{Bip}
\DeclareMathOperator{\card}{card}
\DeclareMathOperator{\Clop}{Clop}
\DeclareMathOperator{\Reg}{Reg}
\DeclareMathOperator{\Regop}{Reg_{op}}
\DeclareMathOperator{\tcl}{cl}
\DeclareMathOperator{\tin}{int}
\DeclareMathOperator{\Ji}{Ji}
\DeclareMathOperator{\Mi}{Mi}
\newcommand{\pji}[3]{\seq{{#1},{#2};{#3}}}
\newcommand{\go}{\omega}
\newcommand{\gD}{\Delta}
\newcommand{\op}{\mathrm{op}}
\newcommand{\cF}{\mathcal{F}}
\newcommand{\Dr}{\mathbin{D}}
\newcommand{\dnw}{\mathbin{\downarrow}}
\newcommand{\tr}{\vartriangleleft}
\newcommand{\tre}{\vartriangleleft_{\be}}
\newcommand{\utre}{\trianglelefteq_{\be}}
\newcommand{\eqe}{\equiv_{\be}}
\DeclareMathOperator{\Pow}{Pow}
\newcommand{\cpl}{\mathsf{c}}
\newcommand{\jz}{$(\vee,0)$}
\newcommand{\res}{\mathbin{\restriction}}
\newcommand{\Res}[2]{#1\!\,\res_{\!\,#2}}
\newcommand{\es}{\varnothing}
\newlength{\cuplength}
\newcommand{\sA}{\mathsf{A}}
\newcommand{\sB}{\mathsf{B}}
\newcommand{\sL}{\mathsf{L}}
\newcommand{\sG}{\mathsf{G}}
\newcommand{\sK}{\mathsf{K}}
\newcommand{\sM}{\mathsf{M}}
\newcommand{\sN}{\mathsf{N}}
\newcommand{\sP}{\mathsf{P}}
\newcommand{\sR}{\mathsf{R}}
\newcommand{\sS}{\mathsf{S}}
\newcommand{\ba}{\boldsymbol{a}}
\newcommand{\bb}{\boldsymbol{b}}
\newcommand{\bc}{\boldsymbol{c}}
\newcommand{\be}{\boldsymbol{e}}
\newcommand{\bi}{\boldsymbol{i}}
\newcommand{\bp}{\boldsymbol{p}}
\newcommand{\bq}{\boldsymbol{q}}
\newcommand{\br}{\boldsymbol{r}}
\newcommand{\bu}{\boldsymbol{u}}
\newcommand{\bw}{\boldsymbol{w}}
\newcommand{\bx}{\boldsymbol{x}}
\newcommand{\by}{\boldsymbol{y}}
\newcommand{\bz}{\boldsymbol{z}}
\title[Extended permutohedron]{The extended permutohedron on a transitive binary relation}
\author[L. Santocanale]{Luigi Santocanale}
\address{Laboratoire d'Informatique Fondamentale de Marseille\\
Universit\'e de Provence\\
39 rue F. Joliot Curie\\
13453 Marseille Cedex 13\\
France}
\email{luigi.santocanale@lif.univ-mrs.fr}
\urladdr{http://www.lif.univ-mrs.fr/\~{}lsantoca/}
\author[F. Wehrung]{Friedrich Wehrung}
\address{LMNO, CNRS UMR 6139\\
D\'epartement de Math\'ematiques\\
Universit\'e de Caen\\
14032 Caen Cedex\\
France}
\email{wehrung@math.unicaen.fr}
\urladdr{http://www.math.unicaen.fr/\~{}wehrung}
\subjclass[2010]{06A15, 05A18, 06A07, 06B10, 06B25, 20F55}
\keywords{Poset; lattice; semidistributive; bounded; subdirect product; transitive; relation; join-irreducible; join-dependency; permutohedron; bipartition; Cambrian lattice; orthocomplementation; closed; open; clopen; regular closed; square-free; bipartite; clepsydra}
\date{\today}
\begin{document}

\begin{abstract}
  For a given transitive binary relation~$\be$ on a set~$E$, the
  transitive closures of open (i.e., co-transitive in~$\be$) sets,
  called the \emph{regular closed} subsets, form an ortholattice $\Reg(\be)$, the \emph{extended permutohedron on~$\be$}.
  This construction, which contains the poset $\Clop(\be)$ of all clopen sets, is a common generalization of known notions such as the generalized permutohedron on a partially ordered set on the one hand, and the bipartition lattice on a set on the other hand. We obtain a precise description of the completely
  \jirr\ (resp., \mirr) elements of $\Reg(\be)$ and the arrow
  relations between them. In particular, we prove that
  \begin{itemize}
    \item[---] $\Reg(\be)$ is the Dedekind-MacNeille completion of the poset $\Clop(\be)$;
      \item[---] Every open subset of~$\be$ is a set-theoretic union of completely \jirr\ clopen subsets of~$\be$;
  \item[---] $\Clop(\be)$ is a lattice if{f} every regular closed subset of~$\be$ is clopen, if{f}~$\be$ contains no ``square'' configuration, if{f} $\Reg(\be)=\Clop(\be)$;
  \item[---] If~$\be$ is finite, then $\Reg(\be)$ is pseudocomplemented if{f} it is semidistributive, if{f} it is a bounded homomorphic image of a free lattice, if{f}~$\be$ is a disjoint sum of antisymmetric transitive relations and two-element full relations.
  \end{itemize}
We illustrate the strength of our results by proving that, for $n\geq3$, the congruence lattice of the lattice~$\Bip(n)$ of all bipartitions of an $n$-element set is obtained by adding a new top element to a Boolean lattice with $n\cdot 2^{n-1}$ atoms. We also determine the factors of the minimal subdirect decomposition of $\Bip(n)$.
\end{abstract}

\maketitle


\section{Introduction}\label{S:Intro}
The lattice of all permutations $\sP(n)$ on an $n$-element chain, also known as the permutohedron,
even if widely known and studied in combinatorics, is a relatively young
object of study from a pure lattice theoretic perspective. Its
elements, the permutations of~$n$ elements, are endowed with the weak Bruhat order; this order turns out to be a lattice.

There are many possible generalization of this order, arising from the theory of Coxeter groups (Bj\"orner~\cite{Bjo83}), from graph and order theory (Pouzet \emph{et al.} \cite{PRRZ}, Hetyei and Krattenthaler~\cite{HetKra11}), from language
theory (Flath~\cite{Fla93}, Bennett and Birkhoff~\cite{BB94}).

In the present paper, we shall focus on one of the most noteworthy features---at least from the lattice-theoretical viewpoint---of one of the equivalent constructions of the permutohedron, namely that it can be realized as the lattice of all \emph{clopen} (i.e., both closed and open) subsets of a certain \emph{strict ordering} relation (viewed as a set of ordered pairs), endowed with the operation of transitive closure.

It turns out that most of the theory can be done for the transitive closure operator on the pairs of a given \emph{transitive} binary relation~$\be$. While, unlike the situation for ordinary permutohedra, the poset~$\Clop(\be)$ of all clopen subsets of~$\be$ may not be a lattice, it is contained in the larger lattice~$\Reg(\be)$ of all so-called \emph{regular closed} subsets of~$\be$, which we shall call the \emph{extended permutohedron on~$\be$} (cf. Section~\ref{S:RegCl}). As $\Reg(\be)$ is endowed with a natural orthocomplementation $\bx\mapsto\orth{\bx}$ (cf. Definition~\ref{D:OrthReg}), it becomes, in fact, an \emph{ortholattice}.
The natural question, whether $\Clop(\be)$ is a lattice, finds a natural answer in Theorem~\ref{T:SqFree}, where we prove that this is equivalent to the preordering associated with~$\be$ be \emph{square-free}, thus extending (with completely different proofs) known results for both the case of strict orderings (Pouzet \emph{et al.}~\cite{PRRZ}) and the case of full relations (Hetyei and Krattenthaler~\cite{HetKra11}).

However, while most earlier references 
deal with clopen subsets, our present paper focuses on the extended permutohedron~$\Reg(\be)$. One of our most noteworthy results is the characterization, obtained in Theorem~\ref{T:RegeSD}, of all finite transitive relations~$\be$ such that~$\Reg(\be)$ is \emph{semidistributive}. It turns out that this condition is equivalent to~$\Reg(\be)$ being \emph{pseudocomplemented}, also to~$\Reg(\be)$ being a \emph{bounded homomorphic image of a free lattice}, and can also be expressed in terms of forbidden sub-configurations of~$\be$. This result is achieved \emph{via} a precise description, obtained in Section~\ref{S:CJIRegP}, of all completely \jirr\ elements of~$\Reg(\be)$. This is a key technical point of the present paper. This description is further extended to a description of the join-dependency relation (cf. Section~\ref{S:JoinDep}), thus essentially completing the list of tools for proving one direction of Theorem~\ref{T:RegeSD}. The other directions are achieved \emph{via ad hoc} constructions, such as the one of Proposition~\ref{P:NonPseudo}.

Another noteworthy consequence of our description of completely \jirr\ elements of the lattice~$\Reg(\be)$ is the \emph{spatiality} of that lattice: every element is a join of completely \jirr\ elements\dots\ and even more can be said (cf. Lemma~\ref{L:OpenUjirr} and Theorem~\ref{T:JirrRege}). As a consequence, \emph{$\Reg(\be)$ is the Dedekind-MacNeille completion of~$\Clop(\be)$} (cf. Corollary~\ref{blue:C:MD}).

We proved in our earlier paper Santocanale and Wehrung~\cite{SaWe11} that the factors of the minimal subdirect decomposition of the permutohedron~$\sP(n)$ are exactly Reading's Cambrian lattices of type~A, denoted in~\cite{SaWe11} by $\sA_U(n)$. 
As a further application of our methods, we determine here the minimal subdirect decomposition of the lattice~$\Bip(n)$ of all bipartitions (i.e., those transitive binary relations with transitive complement) of an $n$-element set, thus solving the ``equation''
 \[
 \frac{\text{Tamari lattice}}
 {\text{permutohedron}}=
 \frac{x}{\text{bipartition lattice}}
 \]
and in fact, more generally,
 \begin{equation}\label{Eq:BipCambrian}
 \frac{\text{Cambrian lattice of type~A}}
 {\text{permutohedron}}=
 \frac{x}{\text{bipartition lattice}}\,.
 \end{equation}
The lattices~$x$ solving the ``equation'' \eqref{Eq:BipCambrian}, denoted here in the form $\sS(n,k)$ (cf. Section~\ref{S:SubdBip}), offer features quite different from those of the Cambrian lattices; in particular, they are not sublattices of the corresponding bipartition lattice~$\Bip(n)$, and their cardinality does not depend on~$n$ alone.

We also use our tools to determine the congruence lattice of every finite bipartition lattice (cf. Corollary~\ref{C:ConBipn}), which, for a base set with at least three elements, turns out to be Boolean with a top element added.

\section{Basic concepts and notation}
\label{S:NotaTerm}

We refer the reader to Gr\"atzer \cite{LTF} for basic facts, notation, and terminology about lattice theory.

We shall denote by~$0$ (resp., $1$) the least (resp., largest) element of a partially ordered set (from now on \emph{poset})~$(P,\leq)$, if they exist. A \emph{lower cover} of an element $p\in P$ is an element $x\in P$ such that $x<p$ and there is no~$y$ such that $x<y<p$. If~$p$ has a unique lower cover, then we shall denote this element by~$p_*$. \emph{Upper covers}, and the notation~$p^*$, are defined dually.

A nonzero element~$p$ in a lattice~$L$ is \emph{\jirr} if $p=x\vee y$
implies that $p\in\set{x,y}$, for all $x,y\in L$. We say that~$p$ is
\emph{completely \jirr} if it has a unique lower cover~$p_{*}$, and every $x < p$ satisfies $x \leq p_{*}$. \emph{Completely \mirr} elements are defined dually. We denote by~$\Ji L$ (resp., $\Mi L$) the set of all \jirr\ (resp., \mirr) elements of~$L$.

Every completely \jirr\ element is \jirr, and in a finite lattice, the two concepts are equivalent. A lattice~$L$ is \emph{spatial} if every
element of~$L$ is a (possibly infinite) join of completely \jirr\ elements. Equivalently, for all $a,b\in L$, $a\nleq b$ implies
that there exists a completely \jirr\ element~$p$ of~$L$ such that
$p\leq a$ and $p\nleq b$.

For a completely \jirr\ element~$p$ and a completely \mirr\ element~$u$ of~$L$, let $p\nearrow u$ hold if $p\leq u^*$ and $p\nleq u$. Symmetrically, let $u\searrow p$ hold if $p_*\leq u$ and $p\nleq u$. The \emph{join-dependency relation}~$\Dr$ is defined on completely \jirr\ elements by
 \[
 p\Dr q\ \ardef\ \pI{%
   \,p\neq q\text{ and }(\exists x)
   (p\leq q\vee x\text{ and }p\nleq q_*\vee x)
   \,}\,.
 \]
It is well-known (cf. Freese, Je\v{z}ek, and Nation \cite[Lemma~11.10]{FJN}) that the join-dependency relation~$\Dr$ on a finite lattice~$L$ can be conveniently expressed in terms of the arrow relations~$\nearrow$ and~$\searrow$ between~$\Ji L$ and~$\Mi L$.

\begin{lemma}\label{L:Arr2D}
Let $p$, $q$ be distinct \jirr\ elements in a finite lattice~$L$. Then $p\Dr_Lq$ if{f} there exists $u\in\Mi L$ such that $p\nearrow u\searrow q$.
\end{lemma}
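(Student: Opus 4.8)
The plan is to prove both implications directly, using nothing beyond the finiteness of~$L$, which guarantees the existence of maximal elements in any nonempty subset and the existence of a unique upper cover~$u^*$ for each $u\in\Mi L$ (note $u\neq1$ whenever $p\nleq u$, so such a cover exists, and any element strictly above~$u$ must dominate~$u^*$).

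For the forward implication, suppose $p\Dr_Lq$ and fix $x\in L$ with $p\leq q\vee x$ and $p\nleq q_*\vee x$. I would let~$u$ be maximal in the set of all $y\geq q_*\vee x$ with $p\nleq y$; this set is nonempty since $q_*\vee x$ belongs to it, so~$u$ exists. First I would check $u\in\Mi L$: if $u=u_1\wedge u_2$ with $u<u_1$ and $u<u_2$, then maximality of~$u$ forces $p\leq u_1$ and $p\leq u_2$, whence $p\leq u$, \contr. So~$u$ has a unique upper cover~$u^*$, and since every element strictly above~$u$ lies above~$p$ by maximality, in particular $p\leq u^*$; together with $p\nleq u$ this gives $p\nearrow u$. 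For $u\searrow q$: we have $q_*\leq q_*\vee x\leq u$, and if $q\leq u$ then, as also $x\leq q_*\vee x\leq u$, we would get $q\vee x\leq u$, contradicting $p\leq q\vee x$ and $p\nleq u$; hence $q\nleq u$, so $u\searrow q$, producing the chain $p\nearrow u\searrow q$.

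For the converse, suppose $u\in\Mi L$ satisfies $p\leq u^*$, $p\nleq u$, $q_*\leq u$, and $q\nleq u$. I would take $x:=u$ as the join-dependency witness. Since $q\nleq u$, the element $q\vee u$ properly exceeds~$u$, hence $q\vee u\geq u^*$ because $u^*$ is the unique upper cover of~$u$; therefore $p\leq u^*\leq q\vee u=q\vee x$. On the other hand, $q_*\leq u$ gives $q_*\vee x=q_*\vee u=u$, so $p\nleq q_*\vee x$ follows from $p\nleq u$. Combined with the hypothesis $p\neq q$, this yields $p\Dr_Lq$.

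Neither direction presents a genuine obstacle; the only point that requires care—and the one around which both arguments are organized—is the standard fact that a meet-irreducible element of a finite lattice has a unique upper cover dominated by every element strictly above it. This is exactly what makes the maximality choice of~$u$ land on a meet-irreducible element in the forward direction, and what forces $q\vee u\geq u^*$ in the converse.
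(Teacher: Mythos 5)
Your proof is correct and complete. The paper gives no proof of this lemma, simply citing Freese, Je\v{z}ek, and Nation \cite[Lemma~11.10]{FJN}; your argument --- taking $u$ maximal above $q_*\vee x$ with $p\nleq u$ for the forward direction, and $x=u$ for the converse --- is exactly the standard one found there, and all the supporting facts you invoke (existence and uniqueness of $u^*$ for $u\in\Mi L$ in a finite lattice, and that every element strictly above $u$ dominates $u^*$) are justified correctly.
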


We shall denote by~$\Dr^n$ (resp., $\Dr^*$) the $n$th relational power (resp., the reflexive and transitive closure) of the~$\Dr$ relation, so, for example, $p\Dr^2 q$ if{f} there exists $r\in\Ji L$ such that $p\Dr r$ and $r\Dr q$.

It is well-known that the congruence lattice~$\Con L$ of a finite
lattice~$L$ can be conveniently described \emph{via} the~$\Dr$
relation on~$L$, as follows (cf. Freese, Je\v{z}ek, and Nation
\cite[Section~II.3]{FJN}). Denote by~$\con(p)$ the least congruence
of~$L$ containing $(p_*,p)$ as an element, for each $p\in\Ji L$. Then
$\con(p)\subseteq\con(q)$ if{f} $p\Dr^*q$, for all $p,q\in\Ji
L$. Furthermore, $\Con L$ is a distributive lattice and $\Ji(\Con
L)=\setm{\con(p)}{p\in\Ji L}$. A subset $S\subseteq\Ji L$ is a
\emph{$\Dr$-upper subset} if $p\in S$ and $p\Dr q$ implies that $q\in
S$, for all $p,q\in\Ji L$. Set $S\dnw x=\setm{s\in S}{s\leq x}$, for
each $x\in L$.

\begin{lemma}\label{L:D2ConL}
The binary relation $\theta_S=\setm{(x,y)\in L\times L}{S\dnw x=S\dnw y}$ is a congruence of~$L$, for every finite lattice~$L$ and every $\Dr$-upper subset~$S$ of $\Ji L$, and the assignment $x\mapsto x/{\theta_S}$ defines an isomorphism from the \jz-subsemilattice~$S^\vee$ of~$L$ generated by~$S$ onto the quotient lattice $L/{\theta_S}$. Furthermore, the assignment $S\mapsto\theta_S$ defines a dual isomorphism from the lattice of all $\Dr$-upper subsets of~$\Ji L$ onto~$\Con L$. The inverse of that isomorphism is given by
 \[
 \theta\mapsto\setm{p\in\Ji L}
 {(p,p_*)\notin\theta}\,,\quad
 \text{for each }\theta\in\Con L\,.
 \]
\end{lemma}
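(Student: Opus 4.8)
The plan is to analyse, for a $\Dr$-upper subset $S$ of $\Ji L$, the map $\rho\colon L\to L$ defined by $\rho(x)=\bigvee(S\dnw x)$, with the empty join read as~$0$. It is immediate that $\rho$ is order-preserving, that $\rho(x)\leq x$ and $\rho(x)\in S^{\vee}$ for every~$x$, and that $\rho$ fixes $S^{\vee}$ pointwise (each $a\in S^{\vee}$ is a finite join of elements of~$S$, all lying in $S\dnw a$); in particular $S\dnw\rho(x)=S\dnw x$ for all~$x$, whence $\rho(x)=\rho(y)$ if{f} $S\dnw x=S\dnw y$, that is, $\Ker\rho=\theta_{S}$. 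Granting that (a)~$\rho$ is a $(\vee,0)$-homomorphism and (b)~$\theta_{S}$ is compatible with~$\wedge$, the first two assertions follow: $\theta_{S}$ is then a congruence, $\rho$ induces a bijective $(\vee,0)$-homomorphism $L/\theta_{S}\to S^{\vee}$, a bijective $(\vee,0)$-homomorphism between finite lattices is a lattice isomorphism, and since $\rho(a)=a$ for $a\in S^{\vee}$ the map $x\mapsto x/\theta_{S}$ restricts to an isomorphism $S^{\vee}\to L/\theta_{S}$.

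For~(a), only $\rho(x\vee y)\leq\rho(x)\vee\rho(y)$ requires an argument, and for this it suffices that each $s\in S$ with $s\leq x\vee y$ satisfies $s\leq\rho(x)\vee\rho(y)$. If $s\leq x$ or $s\leq y$ this is clear. Otherwise $s\leq x\vee y$ is a nontrivial join cover, so by the theory of minimal join covers in finite lattices \cite{FJN} it refines to a minimal nontrivial join cover $s\leq\bigvee Y$ with $Y\subseteq\Ji L$, each $q\in Y$ lying below~$x$ or below~$y$, and $s\Dr q$ for all $q\in Y$; since $s\in S$ and $S$ is $\Dr$-upper, each such~$q$ lies in~$S$, hence in $S\dnw x$ or in $S\dnw y$, so $q\leq\rho(x)\vee\rho(y)$ and therefore $s\leq\bigvee Y\leq\rho(x)\vee\rho(y)$. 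This use of the minimal-join-cover machinery is the only place where the hypothesis on~$S$ enters, and it is the main obstacle; the remainder is bookkeeping. For~(b), note that for $s\in S$ one has $s\leq\rho(x)$ if{f} $s\leq x$; hence both $\rho(x\wedge y)$ and $\rho\pp{\rho(x)\wedge\rho(y)}$ equal $\bigvee\setm{s\in S}{s\leq x\text{ and }s\leq y}$, so $\rho(x\wedge y)$ depends only on $\rho(x)$ and $\rho(y)$, which, given $\Ker\rho=\theta_{S}$, is precisely the compatibility of $\theta_{S}$ with~$\wedge$.

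For the duality, fix $p\in\Ji L$; as $L$ is finite, $p$ is completely \jirr, so $(S\dnw p)\setminus(S\dnw p_{*})\subseteq\set{p}$ and therefore $(p,p_{*})\in\theta_{S}$ if{f} $p\notin S$. Thus $S=\setm{p\in\Ji L}{(p,p_{*})\notin\theta_{S}}$, so $S\mapsto\theta_{S}$ is injective; it is order-reversing because $S\subseteq S'$ gives $S\dnw x=(S'\dnw x)\cap S$, and conversely $\theta_{S}\subseteq\theta_{S'}$ forces $S'\subseteq S$ by the recovery formula just displayed, so $S\mapsto\theta_{S}$ is a dual order-embedding. For surjectivity, let $\theta\in\Con L$ and put $S_{\theta}=\setm{p\in\Ji L}{(p,p_{*})\notin\theta}$. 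Since $p\Dr q$ implies $\con(p)\subseteq\con(q)$ (a case of $\con(p)\subseteq\con(q)\Leftrightarrow p\Dr^{*}q$), while $(p,p_{*})\notin\theta$ means $\con(p)\nsubseteq\theta$, the set $S_{\theta}$ is $\Dr$-upper, so $\theta_{S_{\theta}}\in\Con L$ by the first part. Now, for every $p\in\Ji L$, $\con(p)\subseteq\theta$ if{f} $(p,p_{*})\in\theta$ if{f} $p\notin S_{\theta}$ if{f} $\con(p)\subseteq\theta_{S_{\theta}}$; as $\Con L$ is distributive with $\Ji(\Con L)=\setm{\con(p)}{p\in\Ji L}$, each of $\theta$ and $\theta_{S_{\theta}}$ is the join of the elements $\con(p)$ below it, whence $\theta=\theta_{S_{\theta}}$. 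Therefore $S\mapsto\theta_{S}$ is a dual isomorphism with inverse $\theta\mapsto\setm{p\in\Ji L}{(p,p_{*})\notin\theta}$.
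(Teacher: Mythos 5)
The paper gives no proof of this lemma: it is quoted as a known result from Freese, Je\v{z}ek, and Nation, and your argument is, in substance, the standard one found there. Your write-up is correct. The map $\rho(x)=\bigvee(S\dnw x)$ is the right device; the only genuinely nontrivial step is showing $\rho(x\vee y)\leq\rho(x)\vee\rho(y)$, and your appeal to the minimal join-cover refinement property of finite lattices (every nontrivial join cover of $s$ refines to a minimal nontrivial one $Y\subseteq\Ji L$ with $s\Dr q$ for all $q\in Y$) is exactly where the $\Dr$-upper hypothesis on $S$ is consumed. Two small points worth being explicit about, both of which you handle correctly: the minimal join cover $Y$ refining $\set{x,y}$ is nontrivial precisely because you are in the case $s\nleq x$ and $s\nleq y$; and in the surjectivity argument you only invoke the elementary direction $p\Dr q\Rightarrow\con(p)\subseteq\con(q)$ (which has a direct two-line computation), so there is no circularity with the equivalence $\con(p)\subseteq\con(q)\Leftrightarrow p\Dr^*q$ that the paper states alongside this lemma.
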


For each $p\in\Ji L$, denote by~$\Psi(p)$ the largest congruence $\theta$ of~$L$ such that $p\not\equiv p_*\pmod{\theta}$. Then $\Psi(p)=\theta_{S_p}$ where we set $S_p=\setm{q\in\Ji L}{p\Dr^*q}$. Equivalently, $\Psi(p)$ is generated by all pairs $(q,q_{*})$ such that $p \Dr^* q$ does not hold. Say that $p \in \Ji L$ is \emph{$\Dr^*$-minimal} if $p \Dr^* q$ implies $q \Dr^* p$, for each $q\in\Ji L$. The set~$\gD(L)$ of all $\Dr^*$-minimal \jirr\ elements of~$L$ defines, \emph{via}~$\Psi$, a subdirect product decomposition of~$L$,
 \begin{equation}\label{Eq:MinSubProd}
 L\hookrightarrow
 \prod\vecm{L/{\Psi(p)}}{p\in\gD(L)}\,,\quad
 x\mapsto\vecm{x/{\Psi(p)}}{p\in\gD(L)}\,,
 \end{equation}
that we shall call the \emph{minimal subdirect product decomposition of~$L$}.

A lattice~$L$ is \emph{\jsd} if $x\vee z=y\vee z$ implies that $x\vee
z=(x\wedge y)\vee z$, for all $x,y,z\in L$. \emph{\Msdy} is defined
dually. A lattice is \emph{semidistributive} if it is both join- and
\msd.

A lattice~$L$ is a \emph{bounded homomorphic image of a free lattice}
if there are a free lattice~$F$ and a surjective lattice homomorphism
$f\colon F\twoheadrightarrow L$ such that~$f^{-1}\set{x}$ has both a
least and a largest element, for each $x\in L$. These lattices,
introduced by McKenzie~\cite{McKe72}, form a quite important class within the theory of lattice
  varieties, and are often called ``bounded lattices'' (not to be
confused with lattices with both a least and a largest element). A
finite lattice is bounded if{f} the join-dependency relations on~$L$
and on its dual lattice are both cycle-free (cf. Freese, Je\v{z}ek,
and Nation \cite[Corollary~2.39]{FJN}). Every bounded lattice is
semidistributive (cf. Freese, Je\v{z}ek, and Nation
\cite[Theorem~2.20]{FJN}), but the converse fails, even for finite
lattices (cf. Freese, Je\v{z}ek, and Nation \cite[Figure~5.5]{FJN}).

An \emph{orthocomplementation} on a poset~$P$ with least and largest element is a map $x\mapsto\orth{x}$ of~$P$ to itself such that
\begin{itemize}
\item[(O1)] $x\leq y$ implies that $\orth{y}\leq\orth{x}$,

\item[(O2)] $\oorth{x}=x$,

\item[(O3)] $x\wedge\orth{x}=0$ (in view of~(O1) and~(O2), this is equivalent to $x\vee\orth{x}=1$),
\end{itemize}
for all $x,y\in P$. Elements $x,y\in P$ are \emph{orthogonal} if $x\leq\orth{y}$, equivalently $y\leq\orth{x}$.

An \emph{orthocomplemented poset} is a poset with an
orthocomplementation. Of course, any orthocomplementation of~$P$ is a
dual automorphism of~$(P,\leq)$. In particular, if~$P$ is a lattice,
then \emph{de Morgan's rules}
 \[
 \orth{(x\vee y)}=\orth{x}\wedge\orth{y}\,,\quad
 \orth{(x\wedge y)}=\orth{x}\vee\orth{y}
 \]
hold for all $x,y\in P$. An \emph{ortholattice} is a lattice endowed with an orthocomplementation.

  A lattice~$L$ with a least element~$0$ is \emph{pseudocomplemented} if $\setm{y \in L}{x \wedge y = 0}$ has a greatest element, for each $x \in P$. 

We shall denote by~$\Pow X$ the powerset of a set~$X$, and we shall set $\Pow^*X=(\Pow X)\setminus\set{\es,X}$. We shall also set $[n]=\set{1,2,\dots,n}$, for every positive integer~$n$.

\section{Regular closed subsets of a transitive relation}\label{S:RegCl}

Unless specified otherwise, by ``relation'' (on a set) we shall always mean a binary relation. For a relation~$\be$ on a set~$E$, we will often write
 \begin{align*}
 x\tre y&\ \ardef\ (x,y)\in\be\,,\\
 x\utre y&\ \ardef\ 
 (\text{either }x\tre y\text{ or }x=y)\,,\\
 x\eqe y&\ \ardef\ (x\utre y
 \text{ and }y\utre x)\,,
 \end{align*}
for all $x,y\in E$. We also set
 \begin{align*}
 \cce{a,b}&=\setm{x}{a\utre x
 \text{ and }x\utre b}\,,\\
 \coe{a,b}&=\setm{x}{a\utre x
 \text{ and }x\tre b}\,,\\
 \oce{a,b}&=\setm{x}{a\tre x
 \text{ and }x\utre b}\,,\\
 \cce{a}&=\cce{a,a}\,,
 \end{align*}
for all $a,b\in E$. As $a\tre a$ may occur, $a$ may belong to $\oce{a,b}$.

Denote by~$\tcl(\ba)$ the transitive closure of any relation~$\ba$. We say that~$\ba$ is \emph{closed} if it is transitive. We say that~$\ba$ is \emph{bipartite} if there are no~$x$, $y$, $z$ such that $(x,y)\in\ba$ and $(y,z)\in\ba$. It is trivial that every bipartite relation is closed.

Let~$\be$ be a transitive relation on a set~$E$. A subset~$\ba\subseteq\be$ is \emph{open} (\emph{relatively to~$\be$}) if $\be\setminus\ba$ is closed; equivalently,
 \[
 \pI{x\tre y\tre z
 \text{ and }(x,z)\in\ba}\Rightarrow
 \pI{\text{either }(x,y)\in\ba
 \text{ or }(y,z)\in\ba}\,,\ \text{for all }
 x,y,z\in E\,.
 \]
 The largest open subset of $\ba\subseteq\be$, called the
 \emph{interior} of~$\ba$ and denoted by $\tin(\ba)$, is
 exactly the set of all pairs $(x,y)\in\be$ such that for every
 subdivision $x=z_0\tre z_1\tre \cdots\tre z_n=y$, with $n>0$, there
 exists $i<n$ such that $(z_i,z_{i+1})\in\ba$. We shall repeatedly use
 the easy observation that both operators $\tcl\circ\tin$ and
 $\tin\circ\tcl$ are idempotent.

A subset $\ba\subseteq\be$ is \emph{clopen} if $\ba=\tcl(\ba)=\tin(\ba)$.  We denote by $\Clop(\be)$ the poset of all clopen subsets of~$\be$.  A subset $\ba\subseteq\be$ is \emph{regular closed} (resp., \emph{regular open}) if $\ba=\tcl\tin(\ba)$ (resp., $\ba=\tin\tcl(\ba)$).  We denote by $\Reg(\be)$ (resp., $\Regop(\be)$) the poset of all regular closed (resp., regular open) subsets.

As a set~$\bx$ is open if{f} its complement $\bx^\cpl=\be\setminus\bx$ is closed (by definition), similarly a set~$\bx$ is closed (regular closed, regular open, clopen, respectively) if{f}~$\bx^\cpl$ is open (regular open, regular closed, clopen, respectively).

The proof of the following lemma is a straightforward exercise.

\begin{lemma}\label{L:BasicReg}
\hfill
\begin{enumerate}
\item A subset~$\bx$ of~$\be$ is regular closed if{f} $\bx=\tcl(\bu)$ for some open set~$\bu$.

\item The poset~$\Reg(\be)$ is a complete lattice, with meet and join given by
 \begin{align*}
 \bigvee\vecm{\ba_i}{i\in I}&=\tcl
 \pI{\bigcup\vecm{\ba_i}{i\in I}}\,,\\
 \bigwedge\vecm{\ba_i}{i\in I}&=\tcl\tin
 \pI{\bigcap\vecm{\ba_i}{i\in I}}\,, 
 \end{align*}
for any family $\vecm{\ba_i}{i\in I}$ of regular closed sets.
\end{enumerate}
\end{lemma}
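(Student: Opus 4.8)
The plan is to prove part~(i) first, since it is the workhorse for part~(ii). For part~(i), the forward implication is immediate: if $\bx$ is regular closed then $\bx=\tcl\tin(\bx)$, so $\bu=\tin(\bx)$ does the job, being open by the very definition of the interior. For the converse, suppose $\bx=\tcl(\bu)$ with $\bu$ open. Since $\bu$ is an open subset of $\tcl(\bu)$, it is contained in the largest one, namely $\tin\tcl(\bu)$; thus $\bu\subseteq\tin\tcl(\bu)\subseteq\tcl(\bu)$, and applying $\tcl$ together with its monotonicity and idempotency gives $\tcl(\bu)\subseteq\tcl\tin\tcl(\bu)\subseteq\tcl(\bu)$, that is, $\bx=\tcl\tin(\bx)$.

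For part~(ii), I would first record the one ingredient not yet in the text: an arbitrary union of open subsets of~$\be$ is open, equivalently an arbitrary intersection of transitive relations is transitive, which is immediate. Now let $\vecm{\ba_i}{i\in I}$ be a family of regular closed sets. For the join, write $\ba_i=\tcl(\bu_i)$ with $\bu_i$ open (part~(i)); then $\bigcup\vecm{\bu_i}{i\in I}$ is open, and a short monotonicity computation identifies $\tcl\pI{\bigcup\vecm{\bu_i}{i\in I}}$ with $\tcl\pI{\bigcup\vecm{\ba_i}{i\in I}}$, so the latter is regular closed by part~(i). It contains each $\ba_i$, and any regular closed---hence transitive---upper bound $\bc$ of the $\ba_i$ satisfies $\bigcup\vecm{\ba_i}{i\in I}\subseteq\bc$, whence $\tcl\pI{\bigcup\vecm{\ba_i}{i\in I}}\subseteq\tcl(\bc)=\bc$; so it is the least upper bound. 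For the meet, $\tcl\tin\pI{\bigcap\vecm{\ba_i}{i\in I}}$ is regular closed by part~(i), since the interior of any set is open. It lies below each $\ba_j$, because $\tin\pI{\bigcap\vecm{\ba_i}{i\in I}}\subseteq\ba_j$ and $\ba_j$ is closed; and it is the greatest lower bound, because any regular closed $\bc$ with $\bc\subseteq\ba_i$ for all $i$ satisfies $\bc\subseteq\bigcap\vecm{\ba_i}{i\in I}$, hence $\tin(\bc)\subseteq\tin\pI{\bigcap\vecm{\ba_i}{i\in I}}$ by monotonicity of $\tin$, hence $\bc=\tcl\tin(\bc)\subseteq\tcl\tin\pI{\bigcap\vecm{\ba_i}{i\in I}}$. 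Since all meets and joins exist, $\Reg(\be)$ is a complete lattice, with $\be$ and $\es$ arising as the empty meet and empty join.

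I do not expect a real obstacle: the statement is flagged in the text as a straightforward exercise. The only points that call for any attention are the bookkeeping with the closure and interior operators---keeping straight which composite is being applied, and where the idempotency and monotonicity of $\tcl$ enter---and remembering to verify that unions of open sets are open before writing down the join formula.
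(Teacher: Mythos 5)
Your proof is correct, and it is exactly the argument the authors intend when they dismiss this lemma as ``a straightforward exercise'' (the paper gives no proof of its own). Both halves are handled with the standard closure-operator bookkeeping---part~(i) via $\bu\subseteq\tin\tcl(\bu)\subseteq\tcl(\bu)$, part~(ii) via part~(i) plus the observation that unions of open sets are open and that regular closed sets are transitive---so there is nothing to add.
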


The complement of a regular closed set may not be closed. Nevertheless, we shall now see that there is an obvious ``complementation-like'' map from the regular closed sets to the regular closed sets.

\begin{definition}\label{D:OrthReg}
We define the \emph{orthogonal} of~$\bx$ as $\orth{\bx}=\tcl(\be\setminus\bx)$, for any $\bx\subseteq\be$.
\end{definition}

A straightforward use of Lemma~\ref{L:BasicReg}(i) yields the following lemma.

\begin{lemma}\label{L:BasicOrth}
\hfill
\begin{enumerate}
\item $\orth{\bx}$ is regular closed, for any $\bx\subseteq\be$.

\item The assignment $\orth{}\colon\bx\mapsto\orth{\bx}$ defines an orthocomplementation of~$\Reg(\be)$.
\end{enumerate}
\end{lemma}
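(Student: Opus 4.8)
The whole lemma is bookkeeping with $\cpl$-complements on top of Lemma~\ref{L:BasicReg}(i) and the dictionary recorded right after it: the map $\bx\mapsto\bx^\cpl=\be\setminus\bx$ interchanges closed sets with open sets and regular closed sets with regular open sets. The first thing I would record is the identity
\[
\orth{\bx}=\be\setminus\tin(\bx)\qquad\text{for every }\bx\subseteq\be\,.
\]
It holds because $\tin(\bx)$ is the largest open subset of~$\bx$, so, passing to $\cpl$-complements, $\be\setminus\tin(\bx)$ is the \emph{smallest} closed set containing $\be\setminus\bx$; and the smallest closed set containing any relation is its transitive closure, i.e.\ $\tcl(\be\setminus\bx)=\orth{\bx}$.

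For part~(i): when $\bx$ is closed --- in particular whenever $\bx\in\Reg(\be)$, since a regular closed set is a transitive closure, hence transitive --- the set $\be\setminus\bx$ is open, so $\orth{\bx}=\tcl(\be\setminus\bx)$ is the transitive closure of an open set, hence regular closed by Lemma~\ref{L:BasicReg}(i). (Equivalently: for $\bx\in\Reg(\be)$ one has $\tin(\bx)=\tin\tcl\tin(\bx)$, so $\tin(\bx)$ is regular open, so $\orth{\bx}=\be\setminus\tin(\bx)$ is regular closed by the dictionary.)

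For part~(ii), knowing from~(i) that $\orth{}$ maps $\Reg(\be)$ into itself, it remains to verify (O1)--(O3). Axiom~(O1) is immediate from monotonicity of~$\tcl$: $\bx\subseteq\by$ forces $\be\setminus\by\subseteq\be\setminus\bx$, whence $\orth{\by}\subseteq\orth{\bx}$. For~(O2), apply the identity $\be\setminus\orth{\by}=\tin(\by)$ with $\by=\bx$ and then with $\by=\orth{\bx}$, obtaining $\be\setminus\oorth{\bx}=\tin(\orth{\bx})=\tin\tcl(\be\setminus\bx)$; for $\bx\in\Reg(\be)$ the set $\be\setminus\bx$ is regular open, so the last term is $\be\setminus\bx$, and $\cpl$-complementing gives $\oorth{\bx}=\bx$. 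For~(O3), using the formula for meets in Lemma~\ref{L:BasicReg}(ii),
\[
\bx\wedge\orth{\bx}=\tcl\tin\pI{\bx\cap\orth{\bx}}=\tcl\tin\pI{\bx\setminus\tin(\bx)}=\tcl(\es)=\es=0\,,
\]
the penultimate equality holding because any open subset of $\bx\setminus\tin(\bx)$ is an open subset of~$\bx$, hence contained in $\tin(\bx)$, hence empty.

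No step here is genuinely hard; the one point to get right is the identity $\orth{\bx}=\be\setminus\tin(\bx)$, which connects the ``transitive closure of the complement'' definition of~$\orth{}$ with the interior operator. Once it is in hand, (i) is a single application of Lemma~\ref{L:BasicReg}(i) and (O1)--(O3) reduce to the facts that regular closed sets are closed, regular open sets are open, and $\cpl$-complementation swaps the two.
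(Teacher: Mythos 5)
Your argument is correct and is essentially the proof the paper has in mind (the paper dispatches the lemma as ``a straightforward use of Lemma~\ref{L:BasicReg}(i)''): your part~(i) is exactly that application, and the identity $\orth{\bx}=\be\setminus\tin(\bx)$ is the natural bridge that makes the verification of (O1)--(O3) routine. The checks of (O1), (O2) (via regular openness of $\be\setminus\bx$ for $\bx\in\Reg(\be)$) and (O3) (via the meet formula of Lemma~\ref{L:BasicReg}(ii)) are all sound.

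One point deserves comment. You prove~(i) only for \emph{closed} $\bx$, whereas the statement asserts it for every $\bx\subseteq\be$. Your restriction is in fact forced, because the unrestricted claim is false: take $\be=\set{(1,2),(2,3),(1,3)}$, the strict ordering of a three-element chain, and $\bx=\set{(1,2),(2,3)}$. Then $\orth{\bx}=\tcl\pI{\set{(1,3)}}=\set{(1,3)}$, whose interior is empty (the subdivision $1\tre 2\tre 3$ witnesses this), so $\tcl\tin(\orth{\bx})=\es\neq\orth{\bx}$ and $\orth{\bx}$ is not regular closed. The hypothesis that $\be\setminus\bx$ be open --- equivalently, that $\bx$ be closed --- is precisely what Lemma~\ref{L:BasicReg}(i) requires, and it holds for every $\bx\in\Reg(\be)$, which is all that part~(ii) and the remainder of the paper use. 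So your proof establishes everything that is true and everything that is needed; it would be worth adding one sentence making the restriction explicit rather than leaving it implicit.
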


In particular, $\Reg(\be)$ is self-dual. As $\bx\mapsto\bx^\cpl$ defines a dual isomorphism from $\Reg(\be)$ to $\Regop(\be)$, we obtain the following.

\begin{corollary}\label{C:RegP2RegopP}
The lattices~$\Reg(\be)$ and~$\Regop(\be)$ are pairwise isomorphic, and also self-dual, for any transitive relation~$\be$.
\end{corollary}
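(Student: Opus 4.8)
The plan is to derive the whole statement formally, with no new combinatorial input about~$\be$: it is pure bookkeeping about order-reversing bijections, combining the orthocomplementation of Lemma~\ref{L:BasicOrth} with the duality between regular closed and regular open sets recorded just before Definition~\ref{D:OrthReg}.

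First I would recall that, by the observations following axiom~(O3) in Section~\ref{S:NotaTerm}, any orthocomplementation of a poset is an order-reversing involution, hence a dual automorphism of that poset. Applying this to the map $\orth{}\colon\bx\mapsto\tcl(\be\setminus\bx)$ supplied by Lemma~\ref{L:BasicOrth}(ii), I conclude that $\Reg(\be)$ is self-dual.

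Next I would check that $\bx\mapsto\bx^\cpl$ restricts to a dual isomorphism from~$\Reg(\be)$ onto~$\Regop(\be)$. Indeed, complementation is an inclusion-reversing involution on~$\Pow(\be)$, and, as noted in the excerpt, $\bx$ is regular closed if{f}~$\bx^\cpl$ is regular open; hence complementation carries $\Reg(\be)$ bijectively onto~$\Regop(\be)$ and reverses the order, i.e.\ it is a dual isomorphism. Composing this dual isomorphism with the dual automorphism~$\orth{}$ of~$\Reg(\be)$ then yields an (order-preserving) isomorphism $\Reg(\be)\to\Regop(\be)$, explicitly $\bx\mapsto(\orth{\bx})^\cpl=\be\setminus\tcl(\be\setminus\bx)$; this establishes $\Reg(\be)\cong\Regop(\be)$. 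Finally, self-duality of~$\Regop(\be)$ follows at once: transporting the orthocomplementation of~$\Reg(\be)$ along this isomorphism (equivalently, along the dual isomorphism $\bx\mapsto\bx^\cpl$) produces an orthocomplementation of~$\Regop(\be)$.

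I do not expect a genuine obstacle here; each step is routine. The only point requiring a little care is keeping track of the parity of the maps — two order-reversing maps compose to an order-preserving one — so that one correctly reads off an \emph{isomorphism}, rather than a dual isomorphism, between $\Reg(\be)$ and~$\Regop(\be)$.
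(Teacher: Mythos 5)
Your proof is correct and follows essentially the same route as the paper, which likewise obtains self-duality of $\Reg(\be)$ from the orthocomplementation of Lemma~\ref{L:BasicOrth} and then composes with the dual isomorphism $\bx\mapsto\bx^\cpl$ onto $\Regop(\be)$. Your write-up merely makes explicit the parity bookkeeping that the paper leaves implicit.
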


We shall call $\Clop(\be)$ the \emph{permutohedron on~$\be$} and~$\Reg(\be)$ the \emph{extended permutohedron on~$\be$}. For example, if~$\be$ is the strict ordering associated to a poset~$(E,\leq)$, then $\Clop(\be)$ is the poset denoted by~$\mathbf{N}(E)$ in Pouzet \emph{et al.} \cite{PRRZ}. On the other hand, if~$\be=[n]\times[n]$ for a positive integer~$n$, then $\Clop(\be)$ is the poset of all \emph{bipartitions} of~$[n]$ introduced in Foata and Zeilberger~\cite{FoZe96} and Han~\cite{Han96}, see also Hetyei and Krattenthaler~\cite{HetKra11} where this poset is denoted by~$\Bip(n)$.

While the lattice~$\Reg(\be)$ is always orthocomplemented (cf. Lemma~\ref{L:BasicOrth}), the following result shows that~$\Reg(\be)$ is not always pseudocomplemented.

\begin{proposition}\label{P:NonPseudo}
  Let~$\be$ be a transitive relation on a set~$E$ with pairwise
  distinct elements $a_0,a_1,b\in E$ such that $a_0\eqe a_1$ and either $b\tre a_0$ or $a_0\tre b$. Then there are clopen subsets~$\ba_0$, $\ba_1$, $\bc$ of~$\be$ such that $\ba_0\wedge\bc=\ba_1\wedge\bc=\es$ while $\es\neq\bc\subseteq\ba_0\vee\ba_1$.  In particular, the lattice $\Reg(\be)$ is neither \msd, nor pseudocomplemented.
\end{proposition}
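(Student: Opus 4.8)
The plan is to exhibit explicit clopen sets witnessing the failure of both \msdy\ and pseudocomplementation, working directly from the definitions of $\tcl$ and $\tin$. Assume without loss of generality that $b\tre a_0$ (the case $a_0\tre b$ follows by applying the argument to the reverse relation $\be^{-1}$, which is transitive, and noting that $\Reg(\be)\cong\Reg(\be^{-1})$ via $\bx\mapsto\bx^{-1}$, or one simply runs the symmetric argument). From $a_0\eqe a_1$ we have $a_0\tre a_1$, $a_1\tre a_0$, $b\tre a_0$, and by transitivity $b\tre a_1$. I would set
 \[
 \ba_0=\set{(b,a_0)}\,,\quad
 \ba_1=\set{(b,a_1)}\,,\quad
 \bc=\set{(a_0,a_1),(a_1,a_0)}\,.
 \]
The first checks are that each of these three sets is clopen. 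Each is a single pair or a pair of ``parallel'' pairs of the form $(x,y)$ with no common midpoint inside the set, so each is already transitive, hence closed; for openness one checks that no pair in the set can be written as $x\tre y\tre z$ with both $(x,y)$ and $(y,z)$ outside the set — for $\ba_0=\set{(b,a_0)}$, any subdivision $b=z_0\tre\cdots\tre z_n=a_0$ with $n>1$ passes through some $z_1\neq a_0$, and I must verify that $(b,z_1)\notin\ba_0$ forces $(z_i,z_{i+1})\in\ba_0$ for some $i$ — actually here the cleaner route is to observe $\tin(\ba_0)=\ba_0$ directly from the characterization in the paper, using that $b,a_0$ are distinct and the only element of $\ba_0$ is $(b,a_0)$ itself, so the subdivision of length $1$ already succeeds; the real content is that $(b,a_0)$ does lie in $\tin(\ba_0)$, which is immediate, and that nothing is added, which is vacuous as $\ba_0$ has one element. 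The same reasoning applies to $\ba_1$ and $\bc$.

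Next I compute the relevant joins and meets. For $\ba_0\wedge\bc$ we use Lemma~\ref{L:BasicReg}(ii): $\ba_0\cap\bc=\es$ since $b\notin\set{a_0,a_1}$, and $\tcl\tin(\es)=\es$, so $\ba_0\wedge\bc=\es$; likewise $\ba_1\wedge\bc=\es$. For the join, $\ba_0\vee\ba_1=\tcl(\ba_0\cup\ba_1)=\tcl\set{(b,a_0),(b,a_1)}$. The key point is that this transitive closure contains $\bc=\set{(a_0,a_1),(a_1,a_0)}$: indeed $(a_0,a_1)\in\be$ and $(a_1,a_0)\in\be$ already, but to land in $\tcl\set{(b,a_0),(b,a_1)}$ I need a $\tre$-path between consecutive generators — and here I should instead build $\bc$ from pairs that genuinely appear as two-step compositions. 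The corrected choice is to note that $b\tre a_0\tre a_1$ and $b\tre a_1\tre a_0$, so I would rather take the join of $\set{(b,a_0)}$ with a set forcing the ``return'' pairs; concretely, since $(a_1,a_0)$ together with $(b,a_0)$ is not obviously enough, the robust construction is $\ba_0=\set{(b,a_0)}$, $\ba_1=\set{(a_1,a_0)}$, and $\bc=\set{(b,a_1)}$: then $\bc\cap\ba_0=\bc\cap\ba_1=\es$ (as $a_1\neq a_0$ and $b\notin\set{a_0,a_1}$), while $(b,a_1)\in\be$ and the pair $(b,a_1)$ need only be shown to lie in $\tin(\ba_0\cup\ba_1)$-free terms inside $\tcl(\ba_0\cup\ba_1)$ — but $(b,a_1)$ is not a composite of $(b,a_0)$ and $(a_1,a_0)$. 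The genuinely working choice, which I would commit to, uses that $a_0\eqe a_1$ gives \emph{both} $a_0\tre a_1$ and $a_1\tre a_0$: put
 \[
 \ba_0=\set{(b,a_0)}\,,\quad
 \ba_1=\set{(a_0,a_1)}\,,\quad
 \bc=\set{(b,a_1)}\,;
 \]
then $\bc\cap\ba_0=\bc\cap\ba_1=\es$, whence $\ba_0\wedge\bc=\ba_1\wedge\bc=\es$, while $(b,a_1)\in\tcl\set{(b,a_0),(a_0,a_1)}=\ba_0\vee\ba_1$, so $\es\neq\bc\subseteq\ba_0\vee\ba_1$.

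Finally I would draw the two stated conclusions. For the failure of \msdy: if $\Reg(\be)$ were \msd, then from $\ba_0\wedge\bc=\ba_1\wedge\bc$ we would get $\ba_0\wedge\bc=(\ba_0\vee\ba_1)\wedge\bc$; but the right-hand side is $\supseteq\bc\neq\es$ since $\bc\subseteq\ba_0\vee\ba_1$, contradicting $\ba_0\wedge\bc=\es$. For the failure of pseudocomplementation: the set $\setm{\by\in\Reg(\be)}{\bc\wedge\by=\es}$ contains both $\ba_0$ and $\ba_1$ but cannot contain any upper bound of $\set{\ba_0,\ba_1}$, since such an upper bound $\by\supseteq\ba_0\vee\ba_1\supseteq\bc$ would have $\bc\wedge\by=\tcl\tin(\bc\cap\by)$; here I need $\bc\cap\by\supseteq\bc$ hence $\tcl\tin(\bc\cap\by)\supseteq\tcl\tin(\bc)=\bc\neq\es$ (using that $\bc$ is clopen). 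Therefore $\set{\ba_0,\ba_1}$ has no upper bound in that set, so it has no greatest element, and $\bc$ has no pseudocomplement. The main obstacle is purely bookkeeping: pinning down a triple of pairs that are simultaneously (a) individually clopen, (b) pairwise disjoint in the right pattern with $\bc$, and (c) such that $\bc$ is absorbed into the join $\ba_0\vee\ba_1$ via a genuine two-step $\tre$-composition — the hypothesis $a_0\eqe a_1$ together with $b\tre a_0$ is exactly what makes such a choice available, and verifying the clopenness of each single-pair set against the interior characterization in Section~\ref{S:RegCl} is the only place requiring a small argument rather than an immediate observation.
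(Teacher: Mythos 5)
Your final choice of witnesses does not work: the singleton sets you commit to are not clopen, and in fact not even regular closed. Take $\ba_0=\set{(b,a_0)}$ in the case $b\tre a_0$. From $a_0\eqe a_1$ and transitivity we get $b\tre a_1\tre a_0$, and this is a subdivision of $(b,a_0)$ none of whose steps $(b,a_1)$, $(a_1,a_0)$ lies in $\ba_0$; by the characterization of the interior in Section~\ref{S:RegCl}, this means $(b,a_0)\notin\tin(\ba_0)$, so $\tin(\ba_0)=\es$ and $\tcl\tin(\ba_0)=\es\neq\ba_0$. The same midpoint argument (with $a_0$ as the midpoint) kills $\bc=\set{(b,a_1)}$. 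Your justification --- that ``the subdivision of length $1$ already succeeds'' --- misreads the definition: membership in $\tin(\ba)$ requires \emph{every} subdivision to contain a step in $\ba$, not just one. Nor is this a repairable piece of bookkeeping: the hypothesis $a_0\eqe a_1$ guarantees that any pair joining $b$ to a member of $\set{a_0,a_1}$ admits an intermediate point from that class, so no singleton of that form can ever be open under the proposition's hypotheses. (Your discarded intermediate attempt $\bc=\set{(a_0,a_1),(a_1,a_0)}$ is not even closed, since it composes to $(a_0,a_0)$.)

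The paper's proof gets around exactly this obstruction by using ``fat'' sets: with $I=\cce{a_0,b}$ it takes $\ba_i=\set{a_i}\times(I\setminus\set{a_i})$ and $\bc=\set{a_0,a_1}\times(I\setminus\set{a_0,a_1})$. These are open because any midpoint $y$ of a pair $(a_i,x)\in\ba_i$ again lies in $I$, so one of the two half-steps stays inside the set; and they are transitive because no two of their pairs compose. The meet computation then uses the midpoint $a_1$ (resp.\ $a_0$) to show that $\tin(\ba_i\cap\bc)=\es$. Your derivation of the two lattice-theoretic consequences from the existence of such $\ba_0,\ba_1,\bc$ is fine, but the construction of the three clopen sets is the actual content of the proposition, and yours fails at the first step.
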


\begin{proof}
We show the proof in the case where $a_0\tre b$. By applying the result to $\be^{\op}=\setm{(x,y)}{(y,x)\in\be}$, the result for the case $b\tre a_0$ will follow. 
We set $I=\cce{a_0,b}=\cce{a_1,b}$ and
 \begin{align*}
 \ba_i&=
 \set{a_i}\times(I\setminus\set{a_i})
 &&(\text{for each }i\in\set{0,1})\,,\\
 \bc&=\set{a_0,a_1}\times
 (I\setminus\set{a_0,a_1})\,. 
 \end{align*}
It is straightforward to verify that~$\ba_0$, $\ba_1$, $\bc$ are all clopen subsets of~$\be$. Furthermore, $(a_0,b)\in\bc$ thus $\bc\neq\es$, and $\bc\subseteq\ba_0\cup\ba_1\subseteq\ba_0\vee\ba_1$.

Now let $(a_0,x)$ be an element of $\ba_0\cap\bc=\set{a_0}\times(I\setminus\set{a_0,a_1})$. Observing that $a_0\tre a_1\tre x$ while $(a_0,a_1)\notin\ba_0\cap\bc$ and $(a_1,x)\notin\ba_0\cap\bc$, we obtain that $(a_0,x)\notin\tin\pI{\ba_0\cap\bc}$; whence $\ba_0\wedge\bc=\es$. Likewise, $\ba_1\wedge\bc=\es$.
\end{proof}

\section{Lattices of clopen subsets of square-free transitive relations}\label{S:SqFree}

\begin{definition}\label{D:SqFree}
A transitive relation~$\be$ is \emph{square-free} if for all $(a,b)\in\be$, any two elements of $\cce{a,b}$ are comparable with respect to~$\utre$. That is,
 \begin{multline*}
 (\forall a,b,x,y)\Bigl(
 \pI{a\utre x\text{ and }a\utre y
 \text{ and }x\utre b\text{ and }y\utre b}\\
 \Longrightarrow
 (\text{either }x\utre y\text{ or }
 y\utre x)\Bigr)\,.
 \end{multline*}
\end{definition}

For the particular case of the natural strict ordering $1<2<\cdots<n$,
the following result originates in Guilbaud and Rosenstiehl
\cite[Section~VI.A]{GuRo63}. The case of the full relation $[n]\times[n]$ is covered
by the proof of Hetyei and
Krattenthaler~\cite[Proposition~4.2]{HetKra11}.

\begin{lemma}\label{L:Int(closed)be}
Let~$\be$ be a square-free transitive relation. Then the set $\tin(\ba)$ is closed, for each closed $\ba\subseteq\be$. Dually, the set $\tcl(\ba)$ is open, for each open $\ba\subseteq\be$.
\end{lemma}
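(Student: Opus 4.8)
The plan is to prove the first assertion directly; the dual statement then follows by complementation, since $\bx$ is open iff $\bx^\cpl$ is closed, $\tin(\bx^\cpl)=(\tcl\bx)^\cpl$, and the square-free property of~$\be$ is manifestly self-dual (swap the roles of $x,y$ in Definition~\ref{D:SqFree}). So let $\ba\subseteq\be$ be closed (i.e., transitive), and I must show $\tin(\ba)$ is transitive. Suppose $(x,y)\in\tin(\ba)$ and $(y,z)\in\tin(\ba)$; since $\ba\subseteq\be$ and $\be$ is transitive, $(x,z)\in\be$, and I must show $(x,z)\in\tin(\ba)$. By the explicit description of the interior recalled in Section~\ref{S:RegCl}, this means: given any subdivision $x=w_0\tre w_1\tre\cdots\tre w_n=z$ with $n>0$, I must produce an index $i<n$ with $(w_i,w_{i+1})\in\ba$.

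First I would observe that because $(x,y)\in\tin(\ba)$ we know $a\mathrel{\utre} x$ fails to matter — rather, the point is that $y$ lies in the ``interval'' $\cce{x,z}$: indeed $x\mathrel{\utre}y$ and $y\mathrel{\utre}z$, so $y\in\cce{x,z}$. Now fix a subdivision $x=w_0\tre\cdots\tre w_n=z$. Each $w_k$ with $0<k<n$ satisfies $x\mathrel{\tre}w_k$ and $w_k\mathrel{\tre}z$, hence $w_k\in\cce{x,z}$; and $w_0=x$, $w_n=z$ are in $\cce{x,z}$ as well. Here is where square-freeness enters: the elements $w_0,\dots,w_n$ together with $y$ all lie in $\cce{x,z}$, which by Definition~\ref{D:SqFree} is a chain under~$\utre$. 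In particular $y$ is $\utre$-comparable with every $w_k$. The idea is to use the position of $y$ in this chain to split the subdivision into a piece lying in $\cce{x,y}$ and a piece lying in $\cce{y,z}$, then invoke $(x,y)\in\tin(\ba)$ on the first piece or $(y,z)\in\tin(\ba)$ on the second.

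Concretely, I would choose $k$ to be the largest index with $w_k\mathrel{\utre}y$ (such $k$ exists since $w_0=x\mathrel{\utre}y$). If $k=n$ then $z=w_n\mathrel{\utre}y$, so together with $y\mathrel{\utre}z$ we get $y\eqe z$; then $x=w_0\tre\cdots\tre w_n=z$ is, up to the harmless identification $w_n\eqe y$, a subdivision of $(x,y)$ — more precisely $x=w_0\tre\cdots\tre w_{n-1}\tre w_n$ and $w_n\mathrel{\utre}y$, and one checks this refines to a genuine subdivision of $(x,y)$, so $(x,y)\in\tin(\ba)$ gives some $i<n$ with $(w_i,w_{i+1})\in\ba$, as desired. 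Otherwise $k<n$, and by maximality $w_{k+1}\mathrel{\utre}y$ fails; since $w_{k+1}$ and $y$ are $\utre$-comparable, this forces $y\mathrel{\utre}w_{k+1}$ (and in fact $y\mathrel{\tre}w_{k+1}$, using $w_{k+1}\not\mathrel{\utre}y$). Now $x=w_0\tre\cdots\tre w_k\mathrel{\utre}y$ is a subdivision witnessing $(x,y)$: applying $(x,y)\in\tin(\ba)$ to it yields $i<k\le n$ with $(w_i,w_{i+1})\in\ba$ — unless $k=0$, in which case $x=w_0\mathrel{\utre}y$ and instead $y\mathrel{\utre}w_1=x$ combine to give $y\eqe x$, and then $y=x\tre w_1\tre\cdots\tre w_n=z$ is a subdivision of $(y,z)$, so $(y,z)\in\tin(\ba)$ supplies the required index. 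When $0<k<n$ and no index $i<k$ works, we have $y\mathrel{\utre}w_{k+1}\tre\cdots\tre w_n=z$, a subdivision witnessing $(y,z)$ (after refining the first edge $y\mathrel{\utre}w_{k+1}$, noting we may have $y=w_{k+1}$, i.e.\ $k+1$ might need care, but $y\tre w_{k+1}$ was just established), so $(y,z)\in\tin(\ba)$ yields $i$ with $k\le i<n$ and $(w_i,w_{i+1})\in\ba$.

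The main obstacle I anticipate is purely bookkeeping: the interior is defined via \emph{strict} subdivisions $z_0\tre z_1\tre\cdots$, whereas the chain structure from square-freeness gives only $\utre$-comparabilities, so the borderline cases where $y$ coincides (under $\eqe$, or even literally) with some $w_k$, or sits at an endpoint, must be handled so that one really obtains a legitimate strict subdivision of $(x,y)$ or of $(y,z)$ to feed into the hypotheses. Once the case analysis on the index $k$ (largest with $w_k\mathrel{\utre}y$) is organized as above, each branch is a routine verification, and the argument closes.
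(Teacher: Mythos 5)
Your overall strategy is the same as the paper's: compare the subdivision points with $y$ inside the chain $\cce{x,z}$ (this is where square-freeness enters), take the largest index $k$ with $w_k\utre y$, and split the subdivision there. But there is a genuine gap, already visible from the fact that your argument never actually uses the hypothesis that $\ba$ is closed. That hypothesis is essential: for the strict order $0\tre 1\tre 2$ (which is square-free) and $\ba=\set{(0,1),(1,2)}$, one has $\tin(\ba)=\ba$, which is not transitive; so any argument that does not invoke the transitivity of~$\ba$ cannot be correct.

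The precise failure is in the two claims ``applying $(x,y)\in\tin(\ba)$ \dots yields $i<k$ with $(w_i,w_{i+1})\in\ba$'' and ``$(y,z)\in\tin(\ba)$ yields $i$ with $k\le i<n$ and $(w_i,w_{i+1})\in\ba$''. When $w_k\neq y$ (resp., $y\neq w_{k+1}$), the subdivision you feed to the hypothesis contains the extra edge $(w_k,y)$ (resp., $(y,w_{k+1})$), and the witnessing edge may well be exactly that extra edge, which is not an edge of the original subdivision. In the worst case all you obtain is $(w_k,y)\in\ba$ and $(y,w_{k+1})\in\ba$, and the missing step --- precisely the last line of the paper's proof --- is to use the transitivity of~$\ba$ to conclude $(w_k,w_{k+1})\in\ba$; similarly, in your case $k=n$ one must combine $(w_{n-1},y)\in\ba$ with $(y,z)\in\tin(\ba)\subseteq\ba$ to get $(w_{n-1},w_n)\in\ba$. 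Your $k=0$ sub-case is also garbled as written ($w_1$ need not equal $x$, so $y\eqe x$ does not follow); it is repaired by the same device. Once this use of closedness is inserted at the boundary edges, your case analysis closes and essentially coincides with the paper's proof.
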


\begin{proof}
  It suffices to prove the
  first statement. Let $x\tre y\tre z$ with $(x,y)\in\tin(\ba)$ and
  $(y,z)\in\tin(\ba)$, we must prove that $(x,z)\in\tin(\ba)$.
  Consider a subdivision $x=s_0\tre s_1\tre\cdots\tre s_n=z$ and
  suppose that
 \begin{equation}\label{Eq:sinowita}
 (s_i,s_{i+1})\notin\ba\text{ for each }i<n
 \end{equation}
 (we say that the subdivision \emph{fails
   witnessing~$(x,z)\in\tin(\ba)$}). Denote by~$l$ the largest integer
 such that $l<n$ and $s_l\utre y$. If $s_l=y$, then the subdivision
 $x=s_0\tre s_1\tre\cdots\tre s_l=y$ fails
 witnessing~$(x,y)\in\tin(\ba)$, \contr; so $s_l\neq y$ and $s_l\tre y$. {}From $x=s_0\tre s_1\tre\cdots\tre s_l\tre y$,
 $(x,y)\in\tin(\ba)$, and~\eqref{Eq:sinowita} it follows that
 \begin{equation}\label{Eq:slyina}
 (s_l,y)\in\ba\,.
 \end{equation}
 As~$\be$ is square-free, either $s_{l+1}\utre y$ or $y\tre s_{l+1}$. In the first case, it follows from the definition of~$l$ that
 $l=n-1$, thus, using~\eqref{Eq:slyina} together with $(y,z)\in\ba$, we get $s_{n-1}=s_l\tr_{\ba}y\tr_{\ba}z = s_{n}$, whence $(s_{n-1},s_{n}) \in \ba$, which contradicts~\eqref{Eq:sinowita}.
  
Hence $y\tre s_{l+1}$. {}From $y\tre s_{l+1}\tre s_{l+2}\tre\cdots\tre s_n=z$, $(y,z)\in\tin(\ba)$, and~\eqref{Eq:sinowita} it follows that $(y,s_{l+1})\in\ba$, thus, by~\eqref{Eq:slyina}, $(s_l,s_{l+1})\in\ba$, in contradiction
 with~\eqref{Eq:sinowita}.
\end{proof}

In the particular case of \emph{strict orderings} (i.e., irreflexive transitive relations), most of the following result is contained (with a completely different argument) in Pouzet \emph{et al.}~\cite[Lemma~12]{PRRZ}.

\begin{theorem}\label{T:SqFree}
The following are equivalent, for any transitive relation~$\be$:
\begin{enumerate}
\item $\be$ is square-free;

\item $\Clop(\be)=\Reg(\be)$;

\item $\Clop(\be)$ is a lattice;
  
\item $\Clop(\be)$ has the \emph{\interp\ property}, that is, for all $\bx_0,\bx_1,\by_0,\by_1\in\Clop(\be)$ such that $\bx_i\subseteq\by_j$ for all $i,j<2$, there exists $\bz\in\Clop(\be)$ such that $\bx_i\subseteq\bz$ and $\bz\subseteq\by_i$ for all $i<2$.
\end{enumerate}
\end{theorem}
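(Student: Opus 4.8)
The plan is to establish the cycle of implications (i)$\Rightarrow$(ii)$\Rightarrow$(iii)$\Rightarrow$(iv)$\Rightarrow$(i); the first three steps are short and the last one carries all the weight. For (i)$\Rightarrow$(ii): if~$\be$ is square-free and $\bx\in\Reg(\be)$, then by Lemma~\ref{L:BasicReg}(i) we may write $\bx=\tcl(\bu)$ for an open set~$\bu$ (namely $\bu=\tin(\bx)$), and the dual half of Lemma~\ref{L:Int(closed)be} shows $\tcl(\bu)$ is open; being also closed, $\bx$ is clopen, so $\Reg(\be)\subseteq\Clop(\be)$, and the reverse inclusion is trivial. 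Implication (ii)$\Rightarrow$(iii) is immediate from Lemma~\ref{L:BasicReg}(ii). For (iii)$\Rightarrow$(iv): given clopen $\bx_0,\bx_1,\by_0,\by_1$ with $\bx_i\subseteq\by_j$ for all $i,j<2$, the join $\bz=\bx_0\vee\bx_1$ formed in the lattice~$\Clop(\be)$ satisfies $\bx_i\subseteq\bz$, and since each~$\by_j$ is an upper bound of $\set{\bx_0,\bx_1}$ in~$\Clop(\be)$ we get $\bz\subseteq\by_j$; thus $\bz$ is an interpolant.

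The main implication is (iv)$\Rightarrow$(i), which I would prove contrapositively: if~$\be$ is not square-free, I construct a failure of the \interp\ property. By Definition~\ref{D:SqFree} there are $(a,b)\in\be$ and $x,y\in\cce{a,b}$ with $x\not\utre y$ and $y\not\utre x$. Since~$\utre$ is transitive, a short case check shows $a,b,x,y$ are pairwise distinct and $(a,x),(a,y),(x,b),(y,b),(a,b)$ all lie in~$\be$ (e.g.\ $a=x$ forces $x\utre y$, and $a=b$ forces $x\eqe a\eqe y$, hence again $x\utre y$). The workhorse observation is that for any $S,T\subseteq E$ the set $\be\cap(S\times T)$ is always closed, and is moreover open --- hence clopen --- precisely when $v\in S\cup T$ for every~$v$ for which there exist $u\in S$ and $w\in T$ with $u\tre v\tre w$.

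Using this I would put
 \begin{align*}
 \bx_0&=\be\cap(\set{a}\times\cce{a,x})\,, &
 \bx_1&=\be\cap(\cce{x,b}\times\set{b})\,,\\
 \by_0&=\be\cap\bigl((\set{a}\cup\cce{x,b})\times\cce{a,b}\bigr)\,, &
 \by_1&=\be\cap\bigl(\cce{a,b}\times(\set{b}\cup\cce{a,x})\bigr)\,.
 \end{align*}
Each interval factor is ``convex enough'' for the criterion above to apply --- for $\bx_0$, say, if $a\tre v\tre w$ with $w\in\cce{a,x}$ and $v\neq a$ then $a\utre v\utre x$, so $v\in\cce{a,x}$ --- so all four sets are clopen. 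One then checks $\bx_i\subseteq\by_j$ for all $i,j<2$, that $(a,x),(x,b)\in\bx_0\cup\bx_1$, and, crucially, that $(y,b)\notin\by_0$ and $(a,y)\notin\by_1$ --- the latter because $y\notin\set{a}\cup\cce{x,b}$ and $y\notin\set{b}\cup\cce{a,x}$, which is exactly where $x\not\utre y$ and $y\not\utre x$ are used.

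Finally, suppose some $\bz\in\Clop(\be)$ satisfies $\bx_i\subseteq\bz\subseteq\by_i$ for $i<2$. Then $(a,x),(x,b)\in\bz$, so $(a,b)\in\tcl(\bz)=\bz$; since~$\bz$ is open, the subdivision $a\tre y\tre b$ must be witnessed, i.e.\ $(a,y)\in\bz$ or $(y,b)\in\bz$. But $\bz\subseteq\by_0$ rules out $(y,b)$ and $\bz\subseteq\by_1$ rules out $(a,y)$, \contr. So $\Clop(\be)$ lacks the \interp\ property, closing the cycle. I expect the one genuinely delicate point to be pinning down the four clopen sets: the criterion for clopenness of $\be\cap(S\times T)$ makes verification routine, but arranging simultaneously that $\bx_0\cup\bx_1$ covers the path $a\tre x\tre b$ while both~$\by_0$ and~$\by_1$ omit one of the two edges $(a,y),(y,b)$ --- so that $\by_0\cap\by_1$ still omits both --- is what forces the specific choice above.
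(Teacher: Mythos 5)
Your proposal is correct and follows essentially the same route as the paper: (i)$\Rightarrow$(ii) via Lemma~\ref{L:Int(closed)be}, the trivial middle implications, and for (iv)$\Rightarrow$(i) the same counterexample scheme in which $(a,x)$ and $(x,b)$ force $(a,b)$ into any interpolant while the incomparable pair blocks both $(a,y)$ and $(y,b)$ via $\by_1$ and $\by_0$ respectively. Your $\by_0,\by_1$ are slightly larger ``rectangles'' $\be\cap(S\times T)$ than the paper's unions of two pieces, and your general clopenness criterion for such rectangles is a clean (and correct) way to organize the verification, but the argument is the same in substance.
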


\begin{proof}
(i)$\Rightarrow$(ii) follows immediately from Lemma~\ref{L:Int(closed)be}.

(ii)$\Rightarrow$(iii) and (iii)$\Rightarrow$(iv) are both trivial.
  
(iv)$\Rightarrow$(i). We prove that if~$\be$ is not square-free, then~$\Clop(\be)$ does not satisfy the \interp\ property. By assumption, there are $(a,b)\in\be$ and $u,v\in\cce{a,b}$ such that $u\not\utre v$ and $v\not\utre u$. It is easy to verify that the subsets
 \begin{align*}
 \bx_0&=\set{a}\times\oce{a,u}\,,&
 \bx_1&=\coe{u,b}\times\set{b}\,,\\
 \by_0&=(\set{a}\times\oce{a,b})
 \cup\bx_1\,,&
 \by_1&=(\coe{a,b}\times\set{b})
 \cup\bx_0
 \end{align*}
are all clopen, and that $\bx_i\subseteq\by_j$ for all $i,j<2$. Suppose that there exists $\bz\in\Clop(\be)$ such that $\bx_i \subseteq\bz \subseteq\by_i$ for each $i<2$. {}From $(a,u)\in\bx_0\subseteq\bz$ and $(u,b)\in\bx_1\subseteq\bz$ and the transitivity of~$\bz$ it follows that $(a,b)\in\bz$, thus, as $a\tre v\tre b$ and~$\bz$ is open, either $(a,v)\in\bz$ or $(v,b)\in\bz$. In the first case, $(a,v)\in\by_1$, thus $v\utre u$, a contradiction. In the second case, $(v,b)\in\by_0$, thus $u\utre v$, a contradiction.
\end{proof}

By applying Theorem~\ref{T:SqFree} to the full relation $[n]\times[n]$ (which is trivially square-free), we obtain
the following result, first proved in Hetyei and
Krattenthaler~\cite[Theorem~4.1]{HetKra11}.

\begin{corollary}[Hetyei and Krattenthaler]
\label{C:HetKra}
The poset~$\Bip(n)$ of all bipartitions of~$[n]$ is a lattice.
\end{corollary}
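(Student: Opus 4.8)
The plan is to obtain this statement as an immediate application of Theorem~\ref{T:SqFree} to the full relation $\be=[n]\times[n]$. Recall from the conventions introduced in Section~\ref{S:RegCl} that $\Bip(n)$ is, by definition, the poset $\Clop([n]\times[n])$; hence it suffices to prove that $\Clop([n]\times[n])$ is a lattice.

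First I would verify that $\be=[n]\times[n]$ satisfies the hypotheses of Theorem~\ref{T:SqFree}. Transitivity is obvious. For square-freeness in the sense of Definition~\ref{D:SqFree}, observe that for the full relation one has $x\tre y$, hence a fortiori $x\utre y$, for all $x,y\in[n]$; thus $\cce{a,b}=[n]$ for every $(a,b)\in\be$, and any two of its elements are $\utre$-comparable. So~$\be$ is square-free.

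It then remains only to invoke the implication (i)$\Rightarrow$(iii) of Theorem~\ref{T:SqFree}, which gives that $\Clop([n]\times[n])=\Bip(n)$ is a lattice, as claimed. There is no genuine obstacle here: the sole computation is the (trivial) square-freeness check above, everything else being a direct citation of Theorem~\ref{T:SqFree}. I would also point out, using the implication (i)$\Rightarrow$(ii), that in fact $\Bip(n)=\Reg([n]\times[n])$, so that $\Bip(n)$ is moreover a complete ortholattice by Lemmas~\ref{L:BasicReg} and~\ref{L:BasicOrth}.
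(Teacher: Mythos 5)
Your proposal is correct and is exactly the paper's argument: the full relation $[n]\times[n]$ is trivially square-free (any two elements are $\utre$-comparable since $x\tre y$ always holds), and Theorem~\ref{T:SqFree} (i)$\Rightarrow$(iii) then yields that $\Bip(n)=\Clop([n]\times[n])$ is a lattice. The additional remark that $\Bip(n)=\Reg([n]\times[n])$ is also consistent with the paper's usage later on.
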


\begin{example}\label{Ex:PermPoset2}
We set $\so{E}=\setm{(x,y)\in E\times E}{x<y}$, for any poset~$E$, and we set $\sP(E)=\Clop(\so{E})$ and $\sR(E)=\Reg(\so{E})$. By Theorem~\ref{T:SqFree} (see also Pouzet \emph{et al.} \cite[Lemma~12]{PRRZ}), $\sP(E)$ is a lattice if{f}~$E$ contains no copy of the four-element Boolean lattice~$\sB_2=\set{0,a,b,1}$ (represented on the left hand side diagram of Figure~\ref{Fig:PB2})---that is, by using the above terminology, $\so{E}$ is \emph{square-free}.

\begin{figure}[htb]
\includegraphics{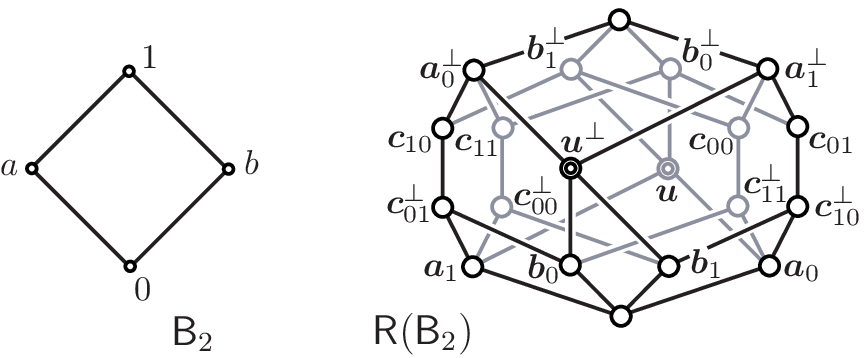}
\caption{The lattice $\sR(\sB_2)$}\label{Fig:PB2}
\end{figure}

The lattice~$\sR(\sB_2)$ has~$20$ elements, while its
subset~$\sP(\sB_2)$ has~$18$ elements. The lattice~$\sR(\sB_2)$ is
represented on the right hand side of Figure~\ref{Fig:PB2}. Its \jirr\ 
elements, all clopen (see a general explanation in
Theorem~\ref{T:JirrRege}), are
\begin{gather*}
\ba_0=\set{(0,a)}\,,\quad\ba_1=\set{(a,1)}\,,
\quad\bb_0=\set{(0,b)}\,,\quad\bb_1=\set{(b,1)}\,,\\
\bc_{00}=\set{(0,a),(0,b),(0,1)}\,,\quad
\bc_{01}=\set{(0,a),(b,1),(0,1)}\,,\\
\bc_{10}=\set{(a,1),(0,b),(0,1)}\,,\quad
\bc_{11}=\set{(a,1),(b,1),(0,1)}\,.
\end{gather*}
The two elements of $\sR(\sB_2)\setminus\sP(\sB_2)$ are
$\bu=\set{(0,a),(a,1),(0,1)}$ together with its orthogonal,
$\orth{\bu}=\set{(0,b),(b,1),(0,1)}$. Those elements are marked by
doubled circles on the right hand side diagram of
Figure~\ref{Fig:PB2}.
\end{example}

\begin{example}\label{Ex:M3inBip3}
It follows from Proposition~\ref{P:NonPseudo} that the lattice~$\Bip(3)$ of all bipartitions of~$[3]$ is not pseudocomplemented.
We can say more: $\Bip(3)$ contains a copy of the five element lattice~$\sM_3$ of length two, namely $\set{\es,\ba,\bb,\bc,\be}$, where
 \begin{align*}
 \ba&=\set{(1,2),(3,1),(3,2)}\,,\\
 \bb&=\set{(2,1),(2,3),(3,1)}\,,\\
 \bc&=\set{(1,2),(1,3),(2,3)}\,,\\
 \be&=[3]\times[3]\,.
 \end{align*}
It is also observed in Hetyei and Krattenthaler
\cite[Example~7.7]{HetKra11} that~$\Bip(3)$ contains a copy of the
five element nonmodular lattice~$\sN_5$; hence it is not modular.
\end{example}

\section{Completely \jirr\ clopen sets}\label{S:CJIRegP}

Throughout this section we shall fix a transitive relation~$\be$ on a set~$E$.

\begin{definition}\label{D:abU}
We denote by~$\cF(\be)$ the set of all triples $(a,b,U)$, where $(a,b)\in\be$, $U\subseteq\cce{a,b}$, and $a\neq b$ implies that $a\notin U$ and $b\in U$. We set $U^\cpl=\cce{a,b}\setminus U$, and
 \[
 \pji{a}{b}{U}=\begin{cases}
 \setm{(x,y)}{a\utre x\tre y\utre b\,,
 \ x\notin U\,,\text{ and }y\in U}\,,&
 \text{if }a\neq b\,,\\
 (\set{a}\cup U^\cpl)\times
 (\set{a}\cup U)\,,&
 \text{if }a=b\,,
 \end{cases}
 \]
for each $(a,b,U)\in\cF(\be)$. Hence 
$\pji{a}{b}{U}=\be\cap\pI{(\set{a}\cup U^\cpl)\times(\set{b}\cup U)}$ in each case.

Observe that $\pji{a}{b}{U}$ is bipartite if{f} $a\neq b$. If $a=b$,
we shall say that $\pji{a}{b}{U}$ is a \emph{clepsydra}.
\footnote{After ``clessidra'', which is the Italian for ``hourglass'',
  the latter describing the pattern of the associated transitive
  relation: the elements of~$U^\cpl$ below; $a$ in the middle; the
  elements of~$U$ above.}
\end{definition}

The proof of the following lemma is a straightforward exercise.

\begin{lemma}\label{L:TwoJirrEq}
Let $(a,b,U),(c,d,V)\in\cF(\be)$. Then $\pji{a}{b}{U}=\pji{c}{d}{V}$ if{f} one of the following statements occurs:
\begin{enumerate}
\item $a\neq b$, $c\neq d$, $a\eqe c$, $b\eqe d$, and $U=V$;
\item $a=b=c=d$ and $U\setminus\set{a}=V\setminus\set{a}$.
\end{enumerate}
\end{lemma}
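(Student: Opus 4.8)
The plan is to verify both implications by a careful but elementary analysis, splitting on whether $a=b$ and $c=d$. The backward direction is routine: in case~(i), the equalities $a\eqe c$, $b\eqe d$, $U=V$ together with the characterization $\pji{a}{b}{U}=\be\cap\bigl((\set{a}\cup U^\cpl)\times(\set{b}\cup U)\bigr)$ show directly that the two sets coincide, since $\cce{a,b}=\cce{c,d}$ whenever $a\eqe c$ and $b\eqe d$ (by transitivity of~$\be$ and the definitions of~$\utre$ and~$\eqe$), so also $U^\cpl=V^\cpl$; in case~(ii) the defining formula for the clepsydra depends only on $a$ and on $U\setminus\set a$, $U^\cpl$, and $U^\cpl=\cce a\setminus U$ is determined by $U\setminus\set a$, so again the two sets agree. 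So I would dispatch this direction in a couple of lines.

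For the forward direction, assume $\pji{a}{b}{U}=\pji{c}{d}{V}$. First I would show that the ``$a\neq b$'' status is shared, using the observation recorded just before the lemma: $\pji{a}{b}{U}$ is bipartite if{f} $a\neq b$. Since bipartiteness is an intrinsic property of the relation, $a\neq b\iff c\neq d$. This leaves the two cases of the lemma. In the clepsydra case $a=b$ and $c=d$: evaluating the common set, its ``diagonal'' pairs are exactly $\set{(a,a)}$ (from $(\set a\cup U^\cpl)\times(\set a\cup U)$, the only pair with equal coordinates is $(a,a)$ since $a\notin U^\cpl$ unless $a\tre a$... here one must be a little careful, but the pair $(a,a)$ is always present because $a\in\set a\cup U^\cpl$ and $a\in\set a\cup U$), and likewise $(c,c)$; hence $a=c$. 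Then from $(\set a\cup U^\cpl)\times(\set a\cup U)=(\set a\cup V^\cpl)\times(\set a\cup V)$ one reads off $\set a\cup U=\set a\cup V$ by looking at second coordinates of pairs with first coordinate $a$, i.e. $U\setminus\set a=V\setminus\set a$, as desired.

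The remaining case, $a\neq b$ and $c\neq d$, is where the real work is. Here $\pji{a}{b}{U}$ is bipartite, and from $(b,b)\notin\be$-free... more precisely, from the definition, the pair $(a,b)$ lies in $\pji{a}{b}{U}$ (since $a\notin U$, $b\in U$, $a\utre a\tre b\utre b$ — using $a\neq b$ hence $a\tre b$), so $(a,b)\in\pji{c}{d}{V}$, giving $c\utre a$, $b\utre d$, $a\notin V$, $b\in V$. Symmetrically $(c,d)\in\pji{a}{b}{U}$ yields $a\utre c$, $d\utre b$, so $a\eqe c$ and $b\eqe d$; in particular $\cce{a,b}=\cce{c,d}$. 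It then remains to show $U=V$. The hard part will be this last equality: I expect to argue that for $y\in\cce{a,b}$ one has $y\in U\iff (a,y)\in\pji{a}{b}{U}$ when $y\neq a$ (taking $x=a$, legitimate since $a\notin U$), while $a\in U$ is impossible and $a\notin V$ likewise, so membership in $U$ of any $y\in\cce{a,b}\setminus\set a$ is detected by $(a,y)$ belonging to the common relation; symmetrically using pairs $(x,b)$ handles the description of $U^\cpl\setminus\set b$. Combining, $U=V$. I would watch out for the degenerate possibilities that $a\tre a$ or $b\tre b$ (so that $a$ or $b$ could a priori sit in $\cce{a,b}$ on the "wrong" side), but the constraints in the definition of $\cF(\be)$ — namely $a\notin U$ and $b\in U$ when $a\neq b$ — rule out the problematic configurations, so the detection arguments go through. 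This case analysis, rather than any single clever step, is the main obstacle, and it is purely a matter of bookkeeping.
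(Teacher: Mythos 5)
Your proposal is correct, and it is exactly the routine case analysis the paper has in mind when it declares the lemma ``a straightforward exercise'': distinguish the bipartite case from the clepsydra case via bipartiteness, detect $a\eqe c$, $b\eqe d$ from $(a,b)$ and $(c,d)$ lying in the common set, and recover $U$ (resp., $U\setminus\set{a}$) from the pairs with first coordinate $a$. One tiny slip: in the clepsydra case the reason the diagonal of $(\set{a}\cup U^{\cpl})\times(\set{a}\cup U)$ is exactly $\set{(a,a)}$ is that $U\cap U^{\cpl}=\es$ forces any diagonal element to be $a$ (your parenthetical about ``$a\notin U^{\cpl}$ unless $a\tre a$'' is not the relevant point), but your stated conclusion is right and the rest goes through.
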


\begin{lemma}\label{L:abUClopJirr}
The set $\bp=\pji{a}{b}{U}$ is clopen and $(a,b)\in\bp$, for each $(a,b,U)\in\cF(\be)$. Furthermore, the set~$\bp_*$ defined by
 \begin{equation}\label{Eq:Defp*}
 \bp_*=\begin{cases}
 \bp\setminus(\cce{a}\times\cce{b})\,,
 &\text{if }a\neq b\,,\\
 \bp\setminus\set{(a,a)}\,,
 &\text{if }a=b
 \end{cases}
 \end{equation}
is clopen, and every proper open subset of~$\bp$ is contained in~$\bp_*$.
\end{lemma}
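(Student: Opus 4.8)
The plan is to treat the two cases $a\neq b$ and $a=b$ in parallel, since both reduce to the same combinatorial mechanism. First I would show that $\bp=\pji abU$ is \emph{open}. Take $(x,z)\in\bp$ and a subdivision $x=z_0\tre z_1\tre\cdots\tre z_n=z$ with $n>0$; I must find $i<n$ with $(z_i,z_{i+1})\in\bp$. By the description $\bp=\be\cap\bigl((\set a\cup U^\cpl)\times(\set b\cup U)\bigr)$, we have $x\in\set a\cup U^\cpl$ and $z\in\set b\cup U$ (and in the $a\neq b$ case also $x\utre b$, $a\utre z$, so every $z_i$ lies in $\cce{a,b}$; in the clepsydra case transitivity of $\be$ gives $a\utre z_i\utre a$, i.e.\ $z_i\eqe a$, so again $z_i\in\cce{a}$). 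Now look at the sequence of memberships $z_0\in U^\cpl\cup\set a$, \dots: since $z_0\notin U$ (it is $a$ or in $U^\cpl$) while $z_n\in U\cup\set b$ — and in the clepsydra case $z_n\in\set a\cup U$ with $z_n\ne a$ impossible to rule out directly, so one argues on whether $z_n\in U$ — pick the least $i$ with $z_{i+1}\in\set b\cup U$; then $z_i\in U^\cpl\cup\set a$ and $(z_i,z_{i+1})\in\be$ lands in $\bp$. This is the routine ``first crossing'' argument; it also immediately shows $(a,b)\in\bp$ since $(a,b)\in\be$, $a\in\set a$, $b\in\set b\cup U$.

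Next, $\bp$ is \emph{closed}: this is the transitivity statement $a\utre x\tre y\utre b$, $x\notin U$, $y\in U$, and $a\utre y'\tre z\utre b$, $y'\notin U$, $z\in U$ with $y\utre y'$ — wait, rather one checks directly that if $(x,y),(y,z)\in\bp$ then $(x,z)\in\bp$: from $x\in\set a\cup U^\cpl$, $z\in\set b\cup U$, and $(x,z)\in\be$ by transitivity of $\be$, membership in $\bp$ follows from the product description. So $\bp$ is clopen. The same computation handles $\bp_*$: in the $a\neq b$ case $\bp_*=\be\cap\bigl((\set a\cup U^\cpl)\times(\set b\cup U)\bigr)\setminus(\cce a\times\cce b)$, and one verifies $\bp_*=\be\cap\bigl((U^\cpl\setminus\cce a)\times(U\setminus\cce b)\cup\cdots\bigr)$ — more cleanly, that removing the ``extreme'' block $\cce a\times\cce b$ preserves both transitivity (no new violations can appear since $\cce a$ is a $\utre$-initial, $\cce b$ a $\utre$-final segment inside $\cce{a,b}$) and openness (a subdivision witness for a pair not in $\cce a\times\cce b$ can be chosen with all intermediate points outside $\cce a$ or outside $\cce b$, essentially by the square/clepsydra structure). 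In the clepsydra case $\bp_*=\bp\setminus\set{(a,a)}$ and one checks removing the single reflexive pair $(a,a)$ keeps it clopen.

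The real content is the last clause: \emph{every proper open subset $\bq\subsetneq\bp$ is contained in $\bp_*$}, equivalently, $\bp_*$ is the unique largest proper open subset. I would argue by contradiction: suppose $\bq\subseteq\bp$ is open and $\bq\not\subseteq\bp_*$. In the $a\neq b$ case this means $\bq$ contains some pair $(x,y)\in\cce a\times\cce b$, so $a\eqe x$ and $y\eqe b$; then $a\utre x\tre y\utre b$ and openness of $\bq$ applied repeatedly along chains must ``propagate'' membership. Concretely, from $(x,y)\in\bq$, $x\eqe a$, $y\eqe b$ one first gets $(a,b)\in\bq$ (using $a\utre x$, $y\utre b$ and that $\bq\subseteq\bp$ is open together with $\bp$ being closed — this needs care, it's really: $(a,x)\in\bp$ or trivial, and transitivity plus openness). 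Then, for an arbitrary $(s,t)\in\bp$, I want $(s,t)\in\bq$: we have $a\utre s$, $t\utre b$, and can interpolate $(a,b)$ in the middle to form $a\utre s\tre t\utre b$ — no, rather $s\utre b$ and $a\utre t$, so consider the chain $a\tre\!\!\!{}^{?}\,s\tre t\tre\!\!\!{}^{?}\,b$; using $(a,b)\in\bq$ and openness of $\bq$ along $a\utre s\utre b$ (if $a\tre s$) forces $(a,s)\in\bq$ or $(s,b)\in\bq$, and a short case analysis — combined with $s\notin U$, $t\in U$ — pins down $(s,t)\in\bq$, so $\bq=\bp$, \contr. The clepsydra case is the same with $(a,a)$ in the role of the extreme block. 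I expect \textbf{this last propagation argument to be the main obstacle}: getting the openness-of-$\bq$ bookkeeping exactly right (which chains to use, and checking the ``wrong'' branch of each openness dichotomy is excluded by $x\notin U$ or $y\in U$) is the one genuinely delicate point; everything else is bookkeeping with the product formula $\bp=\be\cap\bigl((\set a\cup U^\cpl)\times(\set b\cup U)\bigr)$ and the idempotency of $\tcl\circ\tin$ and $\tin\circ\tcl$.
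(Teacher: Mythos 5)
Your proposal is correct and follows essentially the same route as the paper: a direct verification that $\bp$ and $\bp_*$ are clopen from the product description $\bp=\be\cap\bigl((\set{a}\cup U^\cpl)\times(\set{b}\cup U)\bigr)$, followed by the two-step openness propagation (excluding the wrong branch of each openness dichotomy via $s\notin U$, respectively $t\in U$) showing that any open $\bq\subseteq\bp$ meeting $\bp\setminus\bp_*$ must contain all of $\bp$. The only point where your sketch drifts from the actual argument is the openness of $\bp_*$: the right proof is not a choice of subdivision witnesses, but the observation that if $(x,y)$ or $(y,z)$ lands in $\bp\cap(\cce{a}\times\cce{b})$ then the $\eqe$-classes propagate along $x\tre y\tre z$ and force $(x,z)\in\cce{a}\times\cce{b}$ as well, contradicting $(x,z)\in\bp_*$.
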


\begin{note}
The notation~$\bp_*$ will be validated shortly, in Corollary~\ref{C:abUClopJirr}, by proving that~$\bp_*$ is, indeed, the unique lower cover of~$\bp$ in the lattice~$\Reg(\be)$.
\end{note}

\begin{proof}
In both cases it is trivial that $(a,b)\in\bp$.

Now consider the case where $a\neq b$. In that case, $\bp$ is bipartite, thus closed. Let $x\tre y\tre z$ with $(x,z)\in\bp$. If $y\in U$, then $(x,y)\in\bp$, and if $y\notin U$, then $(y,z)\in\bp$. Hence~$\bp$ is clopen.

As $\bp_*\subseteq\bp$ and~$\bp$ is bipartite, $\bp_*$ is bipartite as well, thus~$\bp_*$ is closed. Let $x\tre y\tre z$ with $(x,z)\in\bp_*$, and suppose by way of contradiction that $(x,y)\notin\bp_*$ and $(y,z)\notin\bp_*$. As~$\bp$ is open, either $(x,y)\in\bp$ or $(y,z)\in\bp$, hence either $(x,y)$ or $(y,z)$ belongs to $\bp\cap(\cce{a}\times\cce{b})$. In the first case, $x\eqe a$ and $x\notin U$. Furthermore, $b\eqe y\tre z$, but $z\utre b$ (because $(x,z)\in\bp_*\subseteq\bp$), so $z\eqe b$, and so we get $(x,z)\in\bp\cap(\cce{a}\times\cce{b})=\bp\setminus\bp_*$, \contr. The second case is dealt with similarly. Therefore, $\bp_*$ is open.

Let $\bu\subseteq\bp$ be open and suppose that~$\bu$ is not contained
in~$\bp_*$. This means that there exists $(a',b')\in\bu$ such that
$a\eqe a'$ and $b\eqe b'$. We must prove that $\bp\subseteq\bu$. Let
$(x,y)\in\bp$, so that $x\notin U$ and $y\in U$. {}From $x\notin U$ it follows that $(a',x)\notin\bp$, thus $(a',x)\notin\bu$; whence, from $a'\utre x\tre b'$, $(a',b')\in\bu$, and the openness of~$\bu$, we get $(x,b')\in\bu$.
Now $y\in U$, thus $(y,b')\notin\bp$, and thus $(y,b')\notin\bu$, hence, as $x\tre y\utre b'$, $(x,b')\in\bu$, and~$\bu$ is open, we get
$(x,y)\in\bu$, as required.

{}From now on suppose that $a=b$. It is trivial that~$\bp$ is closed (although it is no longer bipartite). The proof that~$\bp$ is open is similar to the one for the case where $a\neq b$.

Let $x\tre y\tre z$ with $(x,y)\in\bp_*$ and $(y,z)\in\bp_*$. {}From $\bp_*\subseteq\bp$ it follows that $y\in\set{a}\cup U^\cpl$ and $y\in\set{a}\cup U$, thus $y=a$, and thus (as $(x,a)=(x,y)\in\bp_*$) $x\neq a$, and so $(x,z)\neq(a,a)$. This proves that~$\bp_*$ is closed.

Let $x\tre y\tre z$ with $(x,z)\in\bp_*$, and suppose by way of
contradiction that $(x,y)\notin\bp_*$ and $(y,z)\notin\bp_*$. As~$\bp$
is open, either $(x,y)\in\bp$ or $(y,z)\in\bp$, thus either
$(x,y)=(a,a)$ or $(y,z)=(a,a)$. In the first case
$(y,z)=(x,z)\in\bp_*$, and in the second case $(x,y)=(x,z)\in\bp_*$,
\contr\ in both cases. This proves that~$\bp_*$ is open.

Finally let $\bu\subseteq\bp$ be open not contained in~$\bp_*$, so
$(a,a)\in\bu$. Let $(x,y)\in\bp$, we must prove that $(x,y)\in\bu$. If
$(x,y)=(a,a)$ this is trivial. Suppose that $x=a$ and $y\in
U\setminus\set{a}$. Then $a\tre y\tre a$, but $(y,a)\notin\bu$
(because $(y,a)\notin\bp$), $(a,a)\in\bu$, and~$\bu$ is open, thus
$(x,y)=(a,y)\in\bu$, as desired. This completes the case $x=a$. The
case where $y=a$ and $x\in U^\cpl\setminus\set{a}$ is dealt with similarly. Now suppose that $x\in U^\cpl\setminus\set{a}$ and
$y\in U\setminus\set{a}$. As above, we prove that $(y,a)\notin\bu$ and
thus $(a,y)\in\bu$. Now $(a,x)\notin\bu$ (because $(a,x)\notin\bp$),
thus, as $a\tre x\tre y$, $(a,y)\in\bu$, and~$\bu$ is open, we get
$(x,y)\in\bu$, as desired.
\end{proof}

\begin{corollary}\label{C:abUClopJirr}
  Let $(a,b,U)\in\cF(\be)$.  The clopen set $\bp=\pji{a}{b}{U}$ is
  completely \jirr\ in the lattice~$\Reg(\be)$, and the
  element~$\bp_*$ constructed in the statement of
  Lemma~\textup{\ref{L:abUClopJirr}} is the lower cover of~$\bp$ in
  that lattice.
\end{corollary}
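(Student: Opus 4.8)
The plan is to derive the statement quickly from Lemma~\ref{L:abUClopJirr}, once two small facts are in place: first, that the partial order on $\Reg(\be)$ is set-theoretic inclusion, and second, that $\tin(\bx)$ is a proper \emph{open} subset of $\bp$ whenever $\bx\in\Reg(\be)$ with $\bx\subsetneq\bp$. For the first fact, recall from Lemma~\ref{L:BasicReg}(ii) that $\ba\vee\bb=\tcl(\ba\cup\bb)$; since regular closed sets are closed, $\ba\leq\bb$ in $\Reg(\be)$ iff $\ba\vee\bb=\bb$ iff $\ba\subseteq\bb$, so I may freely identify the lattice order with inclusion. I would also record the preliminary observations that $\bp$ and $\bp_*$, being clopen, belong to $\Reg(\be)$; that $\bp\neq\es$ (the least element of $\Reg(\be)$) since $(a,b)\in\bp$; and that $\bp_*\subsetneq\bp$, because $(a,b)$ lies in $\bp$ but, by the formula~\eqref{Eq:Defp*} for $\bp_*$, not in $\bp_*$ (it lies in $\cce{a}\times\cce{b}$ when $a\neq b$, and equals $(a,a)$ when $a=b$).

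The core of the argument is the claim that every $\bx\in\Reg(\be)$ with $\bx\subsetneq\bp$ satisfies $\bx\subseteq\bp_*$. To prove it, put $\bu=\tin(\bx)$, so that $\bu$ is open and $\bu\subseteq\bx\subsetneq\bp$. Moreover $\bu\neq\bp$: otherwise $\bp=\bu\subseteq\bx$, contradicting $\bx\subsetneq\bp$. Thus $\bu$ is a proper open subset of $\bp$, so the last assertion of Lemma~\ref{L:abUClopJirr} gives $\bu\subseteq\bp_*$. Since $\bx$ is regular closed and $\bp_*$ is closed, $\bx=\tcl\tin(\bx)=\tcl(\bu)\subseteq\tcl(\bp_*)=\bp_*$, which proves the claim.

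The corollary now follows. Since $\bp_*\subsetneq\bp$ in $\Reg(\be)$, and any $\by\in\Reg(\be)$ with $\bp_*<\by<\bp$ would, by the claim applied to $\by$, satisfy $\by\subseteq\bp_*$ --- \contr --- the element $\bp_*$ is a lower cover of $\bp$. The claim further says that every $\bx<\bp$ in $\Reg(\be)$ lies below $\bp_*$, so $\bp_*$ is the \emph{unique} lower cover of $\bp$; together with $\bp\neq\es$, this is precisely the definition of $\bp$ being completely \jirr. I do not anticipate any genuine obstacle here: the hard combinatorial work was already carried out in Lemma~\ref{L:abUClopJirr}, and the only points requiring a little care are the identification of the order on $\Reg(\be)$ with inclusion and the verification that $\tin(\bx)$ is a \emph{proper} open subset of $\bp$.
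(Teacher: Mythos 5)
Your proposal is correct and follows essentially the same route as the paper: pass to the interior $\tin(\bx)$ of a regular closed set $\bx\subsetneqq\bp$, note it is a proper open subset of $\bp$, apply the last assertion of Lemma~\ref{L:abUClopJirr} to get $\tin(\bx)\subseteq\bp_*$, and conclude $\bx=\tcl\tin(\bx)\subseteq\bp_*$ since $\bp_*$ is clopen. The extra bookkeeping you include (the order on $\Reg(\be)$ being inclusion, and $(a,b)\in\bp\setminus\bp_*$ witnessing $\bp_*\subsetneqq\bp$) is correct and merely makes explicit what the paper leaves implicit.
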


\begin{proof}
Let $\ba\subsetneqq\bp$ be a regular closed set. As $\tin(\ba)$ is open and properly contained in~$\bp$, it follows from Lemma~\ref{L:abUClopJirr} that $\tin(\ba)\subseteq\bp_*$. Hence, as~$\ba$ is regular closed and~$\bp_*$ is clopen, we get $\ba=\tcl\tin(\ba)\subseteq\bp_*$.
\end{proof}

\begin{lemma}\label{L:OpenUjirr}
Every open subset~$\bu$ of~$\be$ is the set-theoretic union of all its subsets of the form $\pji{a}{b}{U}$, where $(a,b,U)\in\cF(\be)$. In particular, every open subset of~$\be$ is a union of clopen sets.
\end{lemma}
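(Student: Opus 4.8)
The plan is to show that every pair $(x,y)\in\bu$ lies in some $\pji{a}{b}{U}\subseteq\bu$ with $(a,b,U)\in\cF(\be)$; the stated union then follows, and the ``in particular'' clause is immediate since each $\pji{a}{b}{U}$ is clopen by Lemma~\ref{L:abUClopJirr}. So fix $(x,y)\in\bu$. The natural candidate is to take $a=x$, $b=y$, and let $U$ be the set of those $t\in\cce{x,y}$ that are ``reachable in $\bu$ from $x$'', i.e.\ for which $(x,t)$ already belongs to (the transitive-closure-free, purely $\bu$-side of) $\bu$ in the appropriate one-sided sense. More precisely, I would first split on whether $x=y$ or $x\neq y$, since the definition of $\pji{a}{b}{U}$ and the membership constraints in $\cF(\be)$ differ in the two cases.

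First I would treat the case $x\neq y$. Set $a=x$, $b=y$, and define
$U=\setm{t\in\cce{x,y}}{(x,t)\in\bu}\cup\set{y}$ and dually check what $U^\cpl$ must contain. The constraints for $(a,b,U)\in\cF(\be)$ are that $U\subseteq\cce{x,y}$, that $y\in U$, and that $x\notin U$; the first two hold by construction, and $x\notin U$ should follow from $(x,x)\notin\bu$, which I would need to justify — but if $(x,x)\in\be$ and $(x,x)\in\bu$ then, using $x\mathrel{\tre}x\mathrel{\tre}y$ and openness of $\bu$, one already gets a simpler witness, so one can assume $x\notin U$ after possibly relocating the base point. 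The key verifications are then: (a) $(x,y)\in\pji{x}{y}{U}$, which holds because $x\notin U$ and $y\in U$; and (b) $\pji{x}{y}{U}\subseteq\bu$. For (b), take $(s,t)\in\pji{x}{y}{U}$, so $x\mathrel{\utre}s\mathrel{\tre}t\mathrel{\utre}y$ with $s\notin U$ and $t\in U$. From $t\in U$ we have $(x,t)\in\bu$ (or $t=y$, handled separately via $(x,y)\in\bu$). From $x\mathrel{\utre}s\mathrel{\tre}t$ and $(x,t)\in\bu$ and openness of $\bu$ we get $(x,s)\in\bu$ or $(s,t)\in\bu$; but $(x,s)\in\bu$ would put $s\in U$, contradicting $s\notin U$, so $(s,t)\in\bu$, as desired. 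The case $t=y$: from $(x,y)\in\bu$, $x\mathrel{\utre}s\mathrel{\tre}y$, openness, either $(x,s)\in\bu$ (again contradicting $s\notin U$) or $(s,y)\in\bu$.

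Second, the clepsydra case $x=y$, i.e.\ $(x,x)\in\bu$. Here $\cF(\be)$ only requires $U\subseteq\cce{x}=\cce{x,x}$ with no constraint relating $x$ to $U$, and $\pji{x}{x}{U}=(\set{x}\cup U^\cpl)\times(\set{x}\cup U)$ where $U^\cpl=\cce{x}\setminus U$. I would set $U=\setm{t\in\cce{x}}{(x,t)\in\bu}$ and note $U^\cpl\supseteq\setm{s\in\cce{x}}{(s,x)\in\bu\text{-side}}$; the point is to check $\pji{x}{x}{U}\subseteq\bu$ by a case analysis on the four ``quadrants'' $(x,x)$, $(x,t)$ with $t\in U$, $(s,x)$ with $s\in U^\cpl$, and $(s,t)$ with $s\in U^\cpl$, $t\in U$ — exactly mirroring the last paragraph of the proof of Lemma~\ref{L:abUClopJirr}, using openness of $\bu$ together with the relations $s\mathrel{\tre}x\mathrel{\tre}t$ (when the entries differ from $x$) and the fact that the ``wrong-direction'' pairs are excluded from $\bu$ by the very definition of $U$ and $U^\cpl$. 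I expect the main obstacle to be bookkeeping the clepsydra case cleanly and making sure the definitions of $U$ and $U^\cpl$ partition $\cce{x}$ correctly so that $(x,x,U)$ genuinely lies in $\cF(\be)$; the bipartite case $x\neq y$ is routine given Lemma~\ref{L:abUClopJirr}'s openness arguments, and one must only be slightly careful about whether $x$ could lie in $\cce{x,y}$ (it can, if $x\mathrel{\tre}x$), which is precisely why $\cF(\be)$ forces $x\notin U$.
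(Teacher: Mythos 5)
Your overall strategy is exactly the paper's: for each $(a,b)\in\bu$, take $U$ to be the set of points of $\cce{a,b}$ that are ``reachable from $a$ inside $\bu$'', and use openness of~$\bu$ to push membership in~$\bu$ down to an arbitrary pair of $\pji{a}{b}{U}$. The clepsydra case and the generic part of the bipartite case are sound. The one place where the argument as written does not work is your treatment of the possibility $(x,x)\in\bu$ in the case $x\neq y$. With your definition $U=\setm{t\in\cce{x,y}}{(x,t)\in\bu}\cup\set{y}$ you would then have $x\in U$ (note that $x\in\cce{x,y}$ whenever $x\tre x$, and $(x,x)\in\bu$ is perfectly possible, e.g.\ for $\bu=\be$), so $(x,y,U)\notin\cF(\be)$. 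Your proposed escape --- that ``one already gets a simpler witness'' by applying openness to $x\tre x\tre y$, or by ``relocating the base point'' --- is a dead end: openness applied to that chain only returns ``$(x,x)\in\bu$ or $(x,y)\in\bu$'', which is no new information, and no clepsydra can serve as a witness containing $(x,y)$, since in general $y\not\eqe x$.

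The repair is the one the paper makes: define $U=\setm{t\in\cce{x,y}}{t\neq x\text{ and }(x,t)\in\bu}$ outright (the ``$\cup\set{y}$'' is then redundant, since $(x,y)\in\bu$ and $y\neq x$ already put $y\in U$). Then $x\notin U$ by fiat and $y\in U$, so $(x,y,U)\in\cF(\be)$, and your verification of $\pji{x}{y}{U}\subseteq\bu$ survives essentially verbatim provided you isolate the subcase $s=x$ of a pair $(s,t)$ with $s\notin U$ and $t\in U$: there $(s,t)=(x,t)\in\bu$ directly. That subcase has to be split off in any event, because the openness step needs the strict chain $x\tre s\tre t$, which is unavailable when $s=x$ unless $x\tre x$; and it is precisely in this subcase that ``$(x,s)\in\bu$ would put $s\in U$'' fails for the corrected~$U$. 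With these adjustments your proof coincides with the paper's.
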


\begin{proof}
Let $(a,b)\in\bu$, we must find~$U$ such that $(a,b,U)\in\cF(\be)$ and $\pji{a}{b}{U}\subseteq\bu$.

Suppose first that $a\neq b$ and set
 \[
 U=\setm{x\in\cce{a,b}}{x\neq a
 \text{ and }(a,x)\in\bu}\,.
 \]
It is trivial that $a\notin U$ and $b\in U$. Let $(x,y)\in\pji{a}{b}{U}$ (so $x\notin U$ and $y\in U$), we must prove that $(x,y)\in\bu$. If $x=a$ then this is obvious  (because $y\in U$). Now suppose that $x\neq a$. In that case, from $x\notin U$ it follows that $(a,x)\notin\bu$. As $(a,y)\in\bu$ (because $y\in U$), $a\tre x\tre y$, and~$\bu$ is open, we get $(x,y)\in\bu$, as desired.

{}From now on suppose that $a=b$. We set
 \[
 U=\setm{x\in\cce{a}}{(a,x)\in\bu}\,.
 \]
 Observe that $a\in U$, so $\set{a}\cup U=U$.  Let
 $(x,y)\in\pji{a}{a}{U}$, we must prove that $(x,y)\in\bu$. If $x=a$,
 then, as $y\in U$, we get $(x,y)=(a,y)\in\bu$. Hence we may suppose
 from now on that $x\neq a$; it follows that $x\in U^\cpl$ (as $(x,y)\in\pji{a}{a}{U}$) and therefore $(a,x)\notin\bu$. As $y\in U$, we get $(a,y)\in\bu$.  As $a\tre x\tre y$ and~$\bu$ is open, it follows again that $(x,y)\in\bu$, as desired.
\end{proof}

\begin{corollary}\label{C:MacNeille}
  \label{blue:C:MD}
  The lattice~$\Reg(\be)$ is, up to 
  isomorphism, the Dedekind-MacNeille completion of the poset $\Clop(\be)$.
  In particular, every completely \jirr\ element of~$\Reg(\be)$ is clopen.
\end{corollary}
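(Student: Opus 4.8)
The plan is to identify $\Reg(\be)$ with the Dedekind-MacNeille completion of $\Clop(\be)$ through the classical characterization: a complete lattice $L$ containing a poset $P$ as a subposet is, up to isomorphism, the Dedekind-MacNeille completion of $P$ exactly when $P$ is both \emph{join-dense} and \emph{meet-dense} in $L$, that is, every element of $L$ is simultaneously a join and a meet of elements of~$P$. Here $\Reg(\be)$ is a complete lattice by Lemma~\ref{L:BasicReg}(ii), and it contains $\Clop(\be)$ as a subposet, since every clopen set is open, hence equal to its own transitive closure, hence regular closed by Lemma~\ref{L:BasicReg}(i). So the whole statement reduces to proving that $\Clop(\be)$ is both join-dense and meet-dense in $\Reg(\be)$.

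For join-density, I would take an arbitrary regular closed set $\bx$ and write $\bx=\tcl(\bu)$ with $\bu=\tin(\bx)$ open. Lemma~\ref{L:OpenUjirr} exhibits $\bu$ as a set-theoretic union of a family $\vecm{\bp_i}{i\in I}$ of clopen subsets of~$\be$ of the form $\pji{a}{b}{U}$, and then the description of joins in Lemma~\ref{L:BasicReg}(ii) yields
\[
\bx=\tcl(\bu)=\tcl\pI{\bigcup\vecm{\bp_i}{i\in I}}=\bigvee\vecm{\bp_i}{i\in I}\,,
\]
a join, computed inside $\Reg(\be)$, of clopen sets; hence $\Clop(\be)$ is join-dense in $\Reg(\be)$.

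For meet-density I would pass through the orthocomplementation $\orth{}$ of Lemma~\ref{L:BasicOrth}, which is a dual automorphism of $\Reg(\be)$. The key observation is that it maps $\Clop(\be)$ into itself: if $\by$ is clopen, then so is its complement $\by^\cpl$, and therefore $\orth{\by}=\tcl(\by^\cpl)=\by^\cpl$ is clopen. Applying this dual automorphism to the join-density already established then shows that any regular closed $\bx$ is a meet of clopen sets: writing $\orth{\bx}=\bigvee\vecm{\bq_i}{i\in I}$ with the $\bq_i$ clopen, de Morgan's rules give $\bx=\oorth{\bx}=\bigwedge\vecm{\orth{\bq_i}}{i\in I}$. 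This completes the first assertion.

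The ``in particular'' clause then follows at once. If $\bp$ is completely \jirr\ in $\Reg(\be)$, with unique lower cover $\bp_*$, then $\bp$ is the join of the clopen sets it contains; since $\bp\neq\bp_*$, these clopen sets cannot all be contained in $\bp_*$, so there is a clopen $\bq\subseteq\bp$ with $\bq\not\subseteq\bp_*$; as every element of $\Reg(\be)$ strictly below $\bp$ is contained in $\bp_*$, we get $\bq=\bp$, so $\bp$ is clopen. I do not expect any genuine obstacle at this stage: the real content—the construction and analysis of the clopen sets $\pji{a}{b}{U}$ culminating in Lemma~\ref{L:OpenUjirr}—is already in place, and the only mild points requiring care are recalling the correct density-based characterization of the Dedekind-MacNeille completion (together with the completeness of $\Reg(\be)$ from Lemma~\ref{L:BasicReg}(ii)) and the harmless fact that the orthocomplement of a clopen set is clopen.
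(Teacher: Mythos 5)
Your proposal is correct and follows essentially the same route as the paper: join-density of $\Clop(\be)$ via Lemma~\ref{L:OpenUjirr} and the join formula of Lemma~\ref{L:BasicReg}, meet-density by the self-duality of $\Reg(\be)$ (the paper just says ``by duality'' where you make the orthocomplementation argument explicit, correctly noting that $\orth{}$ carries $\Clop(\be)$ onto itself), then the Davey--Priestley density characterization, and the same complete-join-irreducibility argument for the final clause.
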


\begin{proof}
  Let $\ba$ be regular closed, so that $\ba = \tcl(\bb)$, with $\bb$
  open. Write $\bb$ as a union of clopen sets, $\bb = \bigcup_{i \in
    I} \bc_{i}$. Applying the closure operator~$\tcl$ to both sides of the
  equation, we obtain the equality $\ba = \bigvee_{i \in I} \bc_{i}$ in $\Reg(\be)$.
  
  Thus every element of $\Reg(\be)$ is a join of elements from
  $\Clop(\be)$; by duality, every element of $\Reg(\be)$ is a meet of
  elements from $\Clop(\be)$.  It immediately follows, see Davey and
  Priestley \cite[Theorem~7.41]{DP02}, that $\Reg(\be)$ is the
  Dedekind-MacNeille completion of the poset $\Clop(\be)$.
  
  Suppose next that $\ba$ is a completely \jirr\ element of
  $\Reg(\be)$; since $\Reg(\be)$ is join-generated by $\Clop(\be)$,
  we can write $\ba$ as join of clopen sets, $\ba = \bigvee_{i \in I}
  \bc_{i}$. As $\ba$ is completely \jirr, it follows that
  $\ba = \bc_{i}$ for some $i \in I$, thus $\ba$ is clopen.
\end{proof}

Lemma~\ref{L:OpenUjirr} makes it possible to extend Pouzet~\emph{et
  al.} \cite[Lemma~11]{PRRZ}, from permutohedra on posets, to lattices
of regular closed subsets of transitive relations. This result also
refines the equivalence (ii)$\Leftrightarrow$(iii) of
Theorem~\ref{T:SqFree}.

\begin{corollary}
  \label{blue:C:OpenUjirr}
  The following statements hold, for any family $\vecm{\ba_i}{i\in I}$
  of clopen subsets of~$\be$:
  \begin{enumerate}
  \item The set $\setm{\ba_i}{i\in I}$ has a meet in~$\Clop(\be)$
    if{f} $\tin\pI{\bigcap\vecm{\ba_i}{i\in I}}$ is clopen, and then
    the two sets are equal.
    
  \item The set $\setm{\ba_i}{i\in I}$ has a join in~$\Clop(\be)$
    if{f} $\tcl\pI{\bigcup\vecm{\ba_i}{i\in I}}$ is clopen, and then
    the two sets are equal.
  \end{enumerate}
\end{corollary}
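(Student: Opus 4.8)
The plan is to derive the corollary from the spatiality results already in hand, specifically from Corollary~\ref{blue:C:MD}: its proof, through Lemma~\ref{L:OpenUjirr}, shows that $\Clop(\be)$ sits inside the complete lattice $\Reg(\be)$ as a subposet that is simultaneously \emph{join-dense} and \emph{meet-dense}. The first step is to record the elementary consequence that any subposet~$P$ of a complete lattice~$L$ which is both join- and meet-dense inherits from~$L$ all of its existing joins and meets: if $a=\bigvee^{P}S$ exists, then $b:=\bigvee^{L}S$ clearly satisfies $b\leq a$, while meet-density lets us write $b$ as a meet of elements $p\in P$ with $b\leq p$, each of which then dominates every member of~$S$ and hence also dominates~$a$, so that $a\leq b$ and therefore $a=b$; the statement for meets is dual. (This is the familiar fact that the Dedekind--MacNeille completion preserves existing joins and meets; cf.\ Davey and Priestley~\cite{DP02}.)

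Granting this, here is how I would handle~(ii). For the ``if'' part, assume $\bb:=\tcl\bigl(\bigcup_{i\in I}\ba_i\bigr)$ is clopen; then~$\bb$ is a clopen upper bound of $\setm{\ba_i}{i\in I}$, and any clopen upper bound~$\bc$, being closed and containing $\bigcup_{i\in I}\ba_i$, contains $\tcl\bigl(\bigcup_{i\in I}\ba_i\bigr)=\bb$, so~$\bb$ is the join of the family in $\Clop(\be)$. For the ``only if'' part, assume that join, say~$\bc$, exists; by the density observation it coincides with the join of the same family computed in $\Reg(\be)$, which by Lemma~\ref{L:BasicReg}(ii) is precisely~$\bb$; hence $\bb=\bc$ is clopen and the two sets agree.

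Part~(i) is the same argument with $\tcl$, $\tin$ and $\bigcup$, $\bigcap$ suitably interchanged, but with one extra bookkeeping step, because the meet in $\Reg(\be)$ is $\tcl\tin\bigl(\bigcap_{i\in I}\ba_i\bigr)$ rather than $\tin\bigl(\bigcap_{i\in I}\ba_i\bigr)$. For the ``if'' part one checks, exactly as above, that if $\bd:=\tin\bigl(\bigcap_{i\in I}\ba_i\bigr)$ is clopen then it is a clopen lower bound of the family containing every clopen lower bound (using that clopen sets are open), hence the meet in $\Clop(\be)$. For the ``only if'' part, if the meet~$\bc$ exists, the density observation and Lemma~\ref{L:BasicReg}(ii) give $\bc=\tcl\tin\bigl(\bigcap_{i\in I}\ba_i\bigr)$; since moreover $\bc$ is a lower bound of the family, $\bc\subseteq\bigcap_{i\in I}\ba_i$, and~$\bc$ is open, so $\bc=\tin(\bc)\subseteq\tin\bigl(\bigcap_{i\in I}\ba_i\bigr)\subseteq\tcl\tin\bigl(\bigcap_{i\in I}\ba_i\bigr)=\bc$, whence $\bc=\tin\bigl(\bigcap_{i\in I}\ba_i\bigr)$ is clopen and the two sets agree.

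I do not expect any genuine obstacle: all of the real work is already encapsulated in Lemma~\ref{L:OpenUjirr} and Corollary~\ref{blue:C:MD}, and what remains is routine, the only point requiring a second of attention being the closure/interior juggling in the ``only if'' part of~(i). For the record, (i) and~(ii) could also be obtained from one another by applying the proven one to the family $\vecm{\orth{\ba_i}}{i\in I}$, since $\bx\mapsto\orth{\bx}$ is by Lemma~\ref{L:BasicOrth} a dual automorphism of $\Reg(\be)$ that restricts to a dual automorphism of $\Clop(\be)$ (on clopen sets $\orth{\bx}=\be\setminus\bx$); but writing out both cases directly is no more work than setting up the translation.
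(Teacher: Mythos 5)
Your proof is correct, but it is not the route the paper's displayed proof takes: you derive the corollary from Corollary~\ref{blue:C:MD} (join- and meet-density of $\Clop(\be)$ in $\Reg(\be)$, i.e., the Dedekind--MacNeille picture) together with the explicit meet/join formulas of Lemma~\ref{L:BasicReg}(ii), which is precisely the alternative derivation the authors sketch in the remark immediately \emph{after} the corollary --- your closure/interior bookkeeping in the ``only if'' part of~(i) reproduces their displayed computation almost verbatim. The paper's official proof is more direct and bypasses the completion entirely: it reduces~(ii) to~(i) by the involution $\bx\mapsto\be\setminus\bx$ (which you mention only as an afterthought), and for the nontrivial direction of~(i) it argues that if the meet~$\ba$ exists in $\Clop(\be)$ then every pair of $\bu=\tin\pI{\bigcap\vecm{\ba_i}{i\in I}}$ lies, by Lemma~\ref{L:OpenUjirr}, in a clopen subset of the open set~$\bu$; that clopen set is a lower bound of the family, hence contained in~$\ba$, so $\bu=\ba$ is clopen. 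What your approach buys is a cleaner conceptual statement (the corollary is exactly the preservation of existing bounds by a join- and meet-dense extension); what the paper's direct argument buys is independence from Corollary~\ref{blue:C:MD} and a shorter chain of dependencies, since both proofs ultimately rest on the same Lemma~\ref{L:OpenUjirr}. The one step you should make explicit rather than parenthetical is the general order-theoretic fact that a join- and meet-dense subposet of a complete lattice inherits all existing joins and meets; your sketch of it is correct.
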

\begin{proof}
  A simple application of the involution $\bx\mapsto\be\setminus\bx$
  reduces~(ii) to~(i). On the way to proving~(i), we set
  $\bu=\tin\pI{\bigcap\vecm{\ba_i}{i\in I}}$; so~$\bu$ is open.
  
  It is trivial that if~$\bu$ is clopen, then it is the meet of
  $\setm{\ba_i}{i\in I}$ in $\Clop(\be)$. Conversely, suppose that
  $\setm{\ba_i}{i\in I}$ has a meet~$\ba$ in $\Clop(\be)$. It is
  obvious that $\ba\subseteq\bu$. Let $(x,y)\in\bu$. By
  Lemma~\ref{L:OpenUjirr}, there exists $\bb\subseteq\bu$ clopen such
  that $(x,y)\in\bb$. It follows from the definition of~$\ba$ that
  $\bb\subseteq\ba$, thus $(x,y)\in\ba$.  Therefore, $\bu=\ba$ is
  clopen.
\end{proof}

Notice that Corollary~\ref{blue:C:OpenUjirr} can also be derived from
Corollary~\ref{blue:C:MD}: if the inclusion of $\Clop(\be)$ into
$\Reg(\be)$ is, up to isomorphism, the Dedekind-MacNeille completion
of $\Clop(\be)$, then this inclusion preserves existing joins and
meets. Thus, for example, suppose that $\bw =
\bigwedge_{\Clop(\be)}\vecm{\ba_i}{i\in I}$ exists; then $\bw =
\bigwedge_{\Reg(\be)} \vecm{\ba_i}{i\in I}$, so that
\begin{align*}
  \bw = {\bigwedge\!{}_{\Reg(\be)}}
  \vecm{\ba_i}{i\in I} 
  & = \tcl \tin\pI{\bigcap\vecm{\ba_i}{i\in I}} 
  \supseteq \tin\pI{\bigcap\vecm{\ba_i}{i\in I}}\,.
\end{align*}
As $\bw \subseteq \tin\pI{\bigcap\vecm{\ba_i}{i\in I}}$ follows from
the openness of~$\bw$ together with $\bw \subseteq \ba_i$ for each
${i\in I}$, we get $\bw = \tin\pI{\bigcap\vecm{\ba_i}{i\in I}}$,
showing that $\tin\pI{\bigcap\vecm{\ba_i}{i\in I}}$ is closed.

  
  

\begin{theorem}
\label{T:JirrRege}
\label{T:Regespatial}
The completely \jirr\ elements of $\Reg(\be)$ are exactly the elements
$\pji{a}{b}{U}$, where $(a,b,U)\in\cF(\be)$. Furthermore, the lattice
$\Reg(\be)$ is spatial.
\end{theorem}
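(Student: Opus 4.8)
The plan is to observe that one inclusion is already in hand and to obtain the other, together with spatiality, as a direct synthesis of the results established in this section.

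First I would note that, by Corollary~\ref{C:abUClopJirr}, every element of the form $\pji{a}{b}{U}$ with $(a,b,U)\in\cF(\be)$ is completely \jirr\ in $\Reg(\be)$ (with lower cover $\bp_*$). So it remains to prove the converse: that every completely \jirr\ element of $\Reg(\be)$ has this form. Let $\bp$ be completely \jirr\ in $\Reg(\be)$, say with unique lower cover $\bp_*$. By Corollary~\ref{blue:C:MD}, $\bp$ is clopen; in particular $\bp$ is a nonempty open subset of~$\be$. By Lemma~\ref{L:OpenUjirr}, $\bp$ is the set-theoretic union of the (nonempty) family $\cP$ of all its subsets of the form $\pji{a}{b}{U}$, $(a,b,U)\in\cF(\be)$. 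Applying the closure operator $\tcl$ to this union and using that $\bp$ is closed together with the join formula of Lemma~\ref{L:BasicReg}(ii), we get $\bp=\tcl\pI{\bigcup\cP}=\bigvee_{\Reg(\be)}\cP$. Since $\bp$ has a unique lower cover $\bp_*$ which lies above every element strictly below~$\bp$, the join of any family of members of $\cP$ all of which were strictly below $\bp$ would itself be $\leq\bp_*<\bp$; hence at least one member of $\cP$ equals $\bp$, that is, $\bp=\pji{a}{b}{U}$ for some $(a,b,U)\in\cF(\be)$. This proves the first assertion.

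For spatiality, let $\ba\in\Reg(\be)$ be arbitrary. By Lemma~\ref{L:BasicReg}(i) we may write $\ba=\tcl(\bu)$ with $\bu$ open (for instance $\bu=\tin(\ba)$). By Lemma~\ref{L:OpenUjirr}, $\bu$ is a union of sets of the form $\pji{a}{b}{U}$, $(a,b,U)\in\cF(\be)$; applying $\tcl$ and Lemma~\ref{L:BasicReg}(ii) once more, $\ba$ is the join in $\Reg(\be)$ of a family of such sets, each of which is completely \jirr\ by Corollary~\ref{C:abUClopJirr}. Hence every element of $\Reg(\be)$ is a join of completely \jirr\ elements, i.e.\ $\Reg(\be)$ is spatial.

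I do not expect a genuine obstacle here, since all the real technical content has already been absorbed into Lemma~\ref{L:abUClopJirr}, Corollary~\ref{C:abUClopJirr}, Lemma~\ref{L:OpenUjirr}, and Corollary~\ref{blue:C:MD}. The only point requiring a little care is the use of the \emph{complete} \jirr ty of $\bp$: the passage from ``$\bp$ is the join of the family $\cP$'' to ``$\bp$ is a member of $\cP$'' must go through the fact that $\bp_*$ absorbs all elements strictly below $\bp$, rather than through a finitary join-irreducibility argument, because the family $\cP$ may well be infinite.
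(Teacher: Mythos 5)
Your proof is correct and follows essentially the same route as the paper: decompose an open set into pieces $\pji{a}{b}{U}$ via Lemma~\ref{L:OpenUjirr}, apply $\tcl$ to turn the union into a join of completely \jirr\ clopen sets, and conclude both spatiality and the characterization. The only cosmetic difference is that you route the converse direction through Corollary~\ref{blue:C:MD} to get that $\bp$ is clopen before decomposing it, whereas the paper decomposes $\tin(\ba)$ for an arbitrary regular closed $\ba$ and handles both claims with the single identity $\ba=\tcl\tin(\ba)=\bigvee_{i}\pji{a_i}{b_i}{U_i}$; your handling of the possibly infinite join via the lower cover $\bp_*$ is exactly the right point of care.
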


\begin{proof}
  Let $\ba\in\Reg(\be)$. As $\tin(\ba)$ is open, it follows from
  Lemma~\ref{L:OpenUjirr} that we can write $\tin(\ba)=\bigcup_{i\in
    I}\pji{a_i}{b_i}{U_i}$, for a family $\vecm{(a_i,b_i,U_i)}{i\in
    I}$ of elements of~$\cF(\be)$. As the elements
  $\pji{a_i}{b_i}{U_i}$ are all clopen (thus regular closed) and~$\ba$
  is regular closed, it follows that
 \[
 \ba=\tcl\tin(\ba)=
 \bigvee_{i\in I}\pji{a_i}{b_i}{U_i}\quad
 \text{in }\Reg(\be)\,.
 \]
In particular, if~$\ba$ is completely \jirr, then it must be one of the $\pji{a_i}{b_i}{U_i}$. Conversely, by Corollary~\ref{C:abUClopJirr}, every element of the form  $\pji{a}{b}{U}$ is completely \jirr\ in $\Reg(\be)$.
\end{proof}

\section{The arrow relations between clopen sets}\label{S:JoinDep}

Lemma~\ref{L:Arr2D} makes it possible to express the join-dependency relation on a finite lattice in terms of the arrow relations~$\nearrow$ and~$\searrow$. Now let~$\be$ be a transitive relation on a finite set~$E$. By using the dual automorphism $\bx\mapsto\orth{\bx}$ (cf. Lemma~\ref{L:BasicOrth}), $\orth{\bx}\searrow\by$ if{f} $\bx\nearrow\orth{\by}$, for all $\bx,\by\in\Reg(\be)$; hence statements involving~$\searrow$ can always be expressed in terms of~$\nearrow$. Furthermore, the \mirr\ elements of $\Reg(\be)$ are exactly the elements of the form $\orth{\bp}$, where~$\bp$ is a \jirr\ element of $\Reg(\be)$. As every such~$\bp$ is clopen (cf. Theorem~\ref{T:JirrRege}), we get $\orth{\bp}=\be\setminus\bp$. Therefore, we obtain the following lemma.

\begin{lemma}\label{L:ArRelJirr}
$\bp\nearrow\orth{\bq}$ if{f} $\bp\cap\bq\neq\es$ and $\bp\cap\bq_*=\es$, for all \jirr\ clopen sets~$\bp$ and~$\bq$.
\end{lemma}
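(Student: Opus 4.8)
The plan is to recall that $\bp\nearrow\bu$ is defined, for a completely \jirr\ element~$\bp$ and a completely \mirr\ element~$\bu$, by $\bp\leq\bu^*$ and $\bp\nleq\bu$. Here $\bu=\orth{\bq}=\be\setminus\bq$ for a \jirr\ clopen set~$\bq$, and, using the dual automorphism $\bx\mapsto\orth\bx$ together with the fact that~$\bq$ has lower cover~$\bq_*$ (Corollary~\ref{C:abUClopJirr}), the unique upper cover of $\orth\bq$ in $\Reg(\be)$ is $\orth{(\bq_*)}=\be\setminus\bq_*$. So, working inside the Boolean-like structure of complements, $\bp\leq\orth{(\bq_*)}$ reads $\bp\subseteq\be\setminus\bq_*$, i.e.\ $\bp\cap\bq_*=\es$, while $\bp\nleq\orth\bq$ reads $\bp\not\subseteq\be\setminus\bq$, i.e.\ $\bp\cap\bq\neq\es$. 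First I would spell out these two translations carefully, since both~$\bp\leq\orth{(\bq_*)}$ in $\Reg(\be)$ and the literal set inclusion $\bp\subseteq\be\setminus\bq_*$ must be checked to coincide—this is where the orthocomplementation bookkeeping happens.

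The one subtlety is that $\bp\leq\bx$ in the lattice $\Reg(\be)$ is genuine set inclusion (regular closed sets are compared by $\subseteq$, as joins/meets in Lemma~\ref{L:BasicReg}(ii) are computed from $\bigcup$, $\bigcap$ via $\tcl$, $\tin$), so the ordering in $\Reg(\be)$ restricted to its elements is just~$\subseteq$; hence $\bp\leq\orth\bq$ iff $\bp\subseteq\orth\bq=\be\setminus\bq$, and likewise for $\bp\leq\orth{(\bq_*)}$. I would note that $\orth{(\bq_*)}$ is indeed regular closed (Lemma~\ref{L:BasicOrth}(i)), and that it is the upper cover of~$\orth\bq$: since $\bx\mapsto\orth\bx$ is a dual automorphism and $\bq_*\prec\bq$, we get $\orth\bq\prec\orth{(\bq_*)}$, so $(\orth\bq)^*=\orth{(\bq_*)}$. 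Then $\bp\nearrow\orth\bq$ becomes exactly ``$\bp\subseteq\be\setminus\bq_*$ and $\bp\not\subseteq\be\setminus\bq$'', which is ``$\bp\cap\bq_*=\es$ and $\bp\cap\bq\neq\es$.''

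The remaining point is a triviality worth stating: since $\bq_*\subseteq\bq$, the condition $\bp\cap\bq\neq\es$ and $\bp\cap\bq_*=\es$ is what we want, and there is no circularity because $\bq_*$ is the specific clopen set produced in Lemma~\ref{L:abUClopJirr} (equivalently, by Corollary~\ref{C:abUClopJirr}, the lower cover of~$\bq$ in $\Reg(\be)$), which is exactly the $\bq_*$ appearing in the definition of~$\nearrow$. I do not anticipate a real obstacle here; the only care needed is to confirm that in $\Reg(\be)$ the relation $\leq$ is honest inclusion and that passing to orthocomplements turns $\leq$ into $\supseteq$ of complements, both of which are immediate from Lemma~\ref{L:BasicOrth} and the description of $\orth{}$ in Definition~\ref{D:OrthReg} together with $\orth\bq=\be\setminus\bq$ for clopen~$\bq$ (as $\be\setminus\bq$ is already closed). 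Assembling these observations gives the equivalence.
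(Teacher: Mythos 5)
Your proposal is correct and takes exactly the route the paper intends (the paper leaves the proof implicit in the paragraph preceding the lemma): since $\bq$ and $\bq_*$ are clopen, $\orth{\bq}=\be\setminus\bq$ and, because $\orth{}$ is a dual automorphism, $(\orth{\bq})^*=\orth{(\bq_*)}=\be\setminus\bq_*$, so the defining conditions $\bp\leq(\orth{\bq})^*$ and $\bp\nleq\orth{\bq}$ translate into $\bp\cap\bq_*=\es$ and $\bp\cap\bq\neq\es$. No gap.
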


{}From Lemma~\ref{L:Ar2IntDecr} to Lemma~\ref{L:Araneqbcneqd}, we shall fix $(a,b,U),(c,d,V)\in\cF(\be)$. Further, we shall set
$\bp=\pji{a}{b}{U}$, $\bq=\pji{c}{d}{V}$, $U^\cpl=\cce{a,b}\setminus
U$, and $V^\cpl=\cce{c,d}\setminus V$. Notice that if $\bp\nearrow\orth{\bq}$, then 
  \[
  \bp \subseteq(\bq_{*})^{\cpl}
  \subseteq
  \pI{\bq \setminus(\cce{c}\times\cce{d})}^{\cpl}
  = \bq^{\cpl} \cup (\cce{c}\times\cce{d})\,,
  \]
so that $\es \neq \bp \cap \bq \subseteq \cce{c}\times\cce{d}$.

\begin{lemma}
  \label{L:Ar2IntDecr}
  $\bp\nearrow\orth{\bq}$ implies that $\cce{c,d}\subseteq\cce{a,b}$.
\end{lemma}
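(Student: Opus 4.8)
The plan is to exploit the observation recorded just before the statement: $\bp\nearrow\orth{\bq}$ entails $\es\neq\bp\cap\bq\subseteq\cce{c}\times\cce{d}$. So I would begin by fixing a pair $(x,y)\in\bp\cap\bq$; by that observation, $x\eqe c$ and $y\eqe d$.

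Next I would read off from $(x,y)\in\bp$ the two inequalities $a\utre x$ and $y\utre b$. The clean way to do this, avoiding a case split on whether $a=b$, is to use the uniform description $\pji{a}{b}{U}=\be\cap\bigl((\set{a}\cup U^\cpl)\times(\set{b}\cup U)\bigr)$ from Definition~\ref{D:abU}: it gives $x\in\set{a}\cup U^\cpl$ and $y\in\set{b}\cup U$, and since $U,U^\cpl\subseteq\cce{a,b}$ while every element of $\cce{a,b}$ is $\utre$-above $a$ and $\utre$-below $b$ (and trivially $a\utre a$, $b\utre b$), we get $a\utre x$ and $y\utre b$ in every case.

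Finally, for an arbitrary $z\in\cce{c,d}$, that is, $c\utre z$ and $z\utre d$, I would chain these relations using the transitivity of $\utre$ (which holds because $\be$ is transitive, so $\utre$ is a preorder): from $x\eqe c$ we have $x\utre c$, whence $a\utre x\utre c\utre z$ yields $a\utre z$; symmetrically, from $y\eqe d$ we have $d\utre y$, whence $z\utre d\utre y\utre b$ yields $z\utre b$. Therefore $z\in\cce{a,b}$, which is exactly the desired inclusion.

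I do not expect any genuine obstacle here: the substantive content — that $\bp\nearrow\orth{\bq}$ confines $\bp\cap\bq$ to $\cce{c}\times\cce{d}$ — has already been packaged into the remark preceding the statement, so what remains is a short chase through the transitivity of $\utre$. The only point that would need a word of care is the reading-off step $(x,y)\in\bp\Rightarrow a\utre x,\ y\utre b$, and the uniform product description of $\pji{a}{b}{U}$ disposes of it without a case analysis.
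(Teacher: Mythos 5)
Your proof is correct and takes essentially the same route as the paper's: both arguments pick a pair $(x,y)\in\bp\cap\bq$, use the observation preceding the statement to get $x\eqe c$ and $y\eqe d$, read off $a\utre x$ and $y\utre b$ from $(x,y)\in\bp$, and conclude by chaining through the transitivity of~$\utre$. The care you take with the uniform product description of $\pji{a}{b}{U}$ merely spells out a step the paper leaves implicit.
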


\begin{proof}
  It follows from Lemma~\ref{L:abUClopJirr}, and more particularly
  from~\eqref{Eq:Defp*}, that~$\bq_*$ contains
  $\bq\setminus(\cce{c}\times\cce{d})$. By Lemma~\ref{L:ArRelJirr}, it
  follows that $\bp\nearrow\orth{\bq}$ implies that $\bp\cap\bq$ is
  nonempty and contained in $\cce{c}\times\cce{d}$. Pick any element $(x,y)\in \bp\cap\bq$. Then $a\utre x\tre y\utre b$ on the one hand,
  while $x\eqe c$ and $y\eqe d$ on the other hand. The desired conclusion follows.
\end{proof}

\begin{lemma}\label{L:Arc=d}
Suppose that $c=d$. Then $\bp\nearrow\orth{\bq}$ if{f} $a=b=c=d$, $U\cap V\subseteq\set{a}$, and $U^\cpl\cap V^\cpl\subseteq\set{a}$.
\end{lemma}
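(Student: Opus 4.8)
The plan is to turn the arrow relation into a single set-theoretic equation and then compute. Since $\bp$ and $\bq$ are \jirr\ clopen sets (Corollary~\ref{C:abUClopJirr}), Lemma~\ref{L:ArRelJirr} says that $\bp\nearrow\orth{\bq}$ holds if{f} $\bp\cap\bq\neq\es$ and $\bp\cap\bq_*=\es$. Under the hypothesis $c=d$, the set~$\bq$ is a clepsydra, so formula~\eqref{Eq:Defp*} gives $\bq_*=\bq\setminus\set{(c,c)}$; hence $\bp\cap\bq_*=\es$ is the same as $\bp\cap\bq\subseteq\set{(c,c)}$, and combining this with $\bp\cap\bq\neq\es$ shows that $\bp\nearrow\orth{\bq}$ is equivalent to the single equation $\bp\cap\bq=\set{(c,c)}$. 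Everything now reduces to deciding when that equation holds.

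The first step is to force $a=b=c=d$. If $a\neq b$, then $(a,b,U)\in\cF(\be)$ entails $a\notin U$ and $b\in U$, so $(c,c)\in\pji{a}{b}{U}$ would require $c\notin U$ and $c\in U$ simultaneously; thus $(c,c)\notin\bp$ and the equation fails, so $\bp\nearrow\orth{\bq}$ forces $a=b$. Assuming $a=b$, write $\bp=(\set{a}\cup U^\cpl)\times(\set{a}\cup U)$ with $U^\cpl=\cce{a}\setminus U$: if $c\neq a$, then $(c,c)\in\bp$ would place $c$ in both $U^\cpl$ and $U$, which is impossible, so $(c,c)\in\bp$ forces $c=a$, i.e.\ $a=b=c=d$.

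It then remains to treat the case $a=b=c=d$ and to show that $\bp\cap\bq=\set{(a,a)}$ is equivalent to $U\cap V\subseteq\set{a}$ and $U^\cpl\cap V^\cpl\subseteq\set{a}$. Here $\bp=(\set{a}\cup U^\cpl)\times(\set{a}\cup U)$ and $\bq=(\set{a}\cup V^\cpl)\times(\set{a}\cup V)$; intersecting componentwise and using the trivial identity $(\set{a}\cup S)\cap(\set{a}\cup T)=\set{a}\cup(S\cap T)$, we obtain $\bp\cap\bq=\pI{\set{a}\cup(U^\cpl\cap V^\cpl)}\times\pI{\set{a}\cup(U\cap V)}$. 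Both factors contain~$a$, so this product is the singleton $\set{(a,a)}$ if{f} each factor equals $\set{a}$, that is, if{f} $U^\cpl\cap V^\cpl\subseteq\set{a}$ and $U\cap V\subseteq\set{a}$, which is exactly the asserted condition.

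There is no genuine obstacle here; the argument is a short case split followed by an elementary product-of-sets computation. The only thing that needs care is the bookkeeping around the base point~$a$: the whole situation is naturally ``modulo~$\set{a}$'' (as already reflected in Lemma~\ref{L:TwoJirrEq}(ii)), so the unions $\set{a}\cup(\cdot)$ silently absorb stray occurrences of~$a$, and the final conditions come out as inclusions in~$\set{a}$ rather than as genuine disjointness.
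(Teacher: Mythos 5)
Your proof is correct and follows essentially the same route as the paper: reduce $\bp\nearrow\orth{\bq}$ via Lemma~\ref{L:ArRelJirr} and~\eqref{Eq:Defp*} to the equation $\bp\cap\bq=\set{(c,c)}$, rule out $a\neq b$ and $c\neq a$, then analyze the intersection in the clepsydra case. Your componentwise product computation $\bp\cap\bq=\pI{\set{a}\cup(U^\cpl\cap V^\cpl)}\times\pI{\set{a}\cup(U\cap V)}$ is a slightly tidier packaging of the paper's element-by-element argument, but not a genuinely different approach.
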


\begin{proof}
Suppose first that $\bp\nearrow\orth{\bq}$. {}From $\bq_*=\bq\setminus\set{(c,c)}$ (cf.~\eqref{Eq:Defp*}) it follows that
 \begin{equation}\label{Eq:bpcapbqcc}
 \bp\cap\bq=\set{(c,c)}\,,
 \end{equation}
and thus $(c,c)\in\pji{a}{b}{U}$, which rules out $a\neq b$ (cf. Definition~\ref{D:abU}). Hence $a=b$ and~$c$ belongs to $(\set{a}\cup U^\cpl)\cap(\set{a}\cup U)=\set{a}$, so $a=c$. Now let $x\in U^\cpl\cap V^\cpl$. Then $(x,a)$ belongs to $\bp\cap\bq$, thus, by~\eqref{Eq:bpcapbqcc}, $x=a$. The proof of the containment $U\cap V\subseteq\set{a}$ is similar.

Conversely, suppose that $a=b=c=d$, $U\cap V\subseteq\set{a}$, and $U^\cpl\cap V^\cpl\subseteq\set{a}$. Then any $(x,y)\in\bp\cap\bq$ satisfies $x\in U^\cpl\cap V^\cpl$, thus $x=a$. Likewise, $y=a$.
\end{proof}

As a noteworthy consequence, we obtain that if~$\bq$ is a clepsydra, then there is no~$\bp$ such that $\bp\Dr\bq$:

\begin{corollary}\label{C:Arc=d}
If~$c=d$, then the relation $\bp\Dr\bq$ does not hold.
\end{corollary}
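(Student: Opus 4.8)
The plan is to reduce $\bp\Dr\bq$, via Lemma~\ref{L:Arr2D}, to a short chain of arrow relations between \jirr\ clopen sets, then to feed that chain into Lemma~\ref{L:Arc=d} twice, and finally to conclude $\bp=\bq$, which contradicts the very definition of~$\Dr$.

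First I would apply Lemma~\ref{L:Arr2D}: were $\bp\Dr\bq$ to hold, there would be a \mirr\ element~$u$ of $\Reg(\be)$ with $\bp\nearrow u\searrow\bq$. By the observations preceding Lemma~\ref{L:ArRelJirr}, every such~$u$ equals $\orth{\br}$ for some \jirr\ clopen set~$\br$, which by Theorem~\ref{T:JirrRege} has the form $\br=\pji{a'}{b'}{U'}$ with $(a',b',U')\in\cF(\be)$, and $\orth{\br}\searrow\bq$ is equivalent to $\br\nearrow\orth{\bq}$. Hence $\bp\Dr\bq$ produces a \jirr\ clopen set~$\br=\pji{a'}{b'}{U'}$ with $\bp\nearrow\orth{\br}$ and $\br\nearrow\orth{\bq}$.

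Since $c=d$, Lemma~\ref{L:Arc=d} applied to $\br\nearrow\orth{\bq}$ forces $a'=b'=c=d$ together with $U'\cap V\subseteq\set{c}$ and $\cce{c}\setminus(U'\cup V)\subseteq\set{c}$; in particular $\br=\pji{c}{c}{U'}$ is itself a clepsydra, so its describing triple has equal first two coordinates and Lemma~\ref{L:Arc=d} applies once more, this time to $\bp\nearrow\orth{\br}$, yielding $a=b=c$ together with $U\cap U'\subseteq\set{c}$ and $\cce{c}\setminus(U\cup U')\subseteq\set{c}$. Reading these four containments inside $E_0:=\cce{c}$, they say that each of $U'\cup V$ and $U\cup U'$ contains $E_0\setminus\set{c}$ while each of $U'\cap V$ and $U\cap U'$ is contained in $\set{c}$; a one-line membership check then gives, for every $x\in E_0\setminus\set{c}$, the equivalences $x\in U\Leftrightarrow x\notin U'\Leftrightarrow x\in V$, that is, $U\setminus\set{c}=V\setminus\set{c}$. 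As now $a=b=c=d$, Lemma~\ref{L:TwoJirrEq}(ii) gives $\bp=\pji{a}{b}{U}=\pji{c}{d}{V}=\bq$, which contradicts the requirement $\bp\neq\bq$ built into $\bp\Dr\bq$. Therefore $\bp\Dr\bq$ fails.

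The work sits in the two substitutions into Lemma~\ref{L:Arc=d} and the elementary set-theoretic deduction that the two ``almost-partition'' conditions force $U$ and $V$ to agree off $\set{c}$; both are routine. The only point needing care is the legitimacy of the second application of Lemma~\ref{L:Arc=d}, and this is precisely what the first application secures, by collapsing~$\br$ to the clepsydra $\pji{c}{c}{U'}$. I foresee no genuine obstacle.
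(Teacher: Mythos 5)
Your proof is correct and follows essentially the same route as the paper: unfold $\bp\Dr\bq$ into $\bp\nearrow\orth{\br}\searrow\bq$ via Lemma~\ref{L:Arr2D}, apply Lemma~\ref{L:Arc=d} twice (first forcing~$\br$ to be a clepsydra over the same point, then constraining~$U$), and deduce $U\setminus\set{c}=V\setminus\set{c}$, hence $\bp=\bq$ by Lemma~\ref{L:TwoJirrEq}, a contradiction. The only difference is that you spell out the membership check that the paper compresses into ``It follows that $U\cup\set{a}=V\cup\set{a}$''.
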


\begin{proof}
Suppose that $\bp\Dr\bq$, so that there exists a \jirr\ element~$\br$ such that $\bp\nearrow\orth{\br}$ and $\br\nearrow\orth{\bq}$. By
  Lemma~\ref{L:Arc=d}, $a=b=c=d$ and there exists $W\subseteq\cce{a}$
  such that $\br=\pji{a}{a}{W}$ and all the sets $U\cap W$,
  $U^\cpl\cap W^\cpl$, $V\cap W$, and $V^\cpl\cap W^\cpl$ are
  contained in~$\set{a}$. It follows that $U\cup\set{a}=V\cup\set{a}$,
  thus $\bp=\bq$, \contr.
\end{proof}

\begin{lemma}\label{L:Arcneqd}
Suppose that $a=b$ and $c\neq d$. Then $\bp\nearrow\orth{\bq}$ if{f} $a\eqe c\eqe d$, $(\set{a}\cup U)\cap V\neq\es$, and $(\set{a}\cup U^\cpl)\cap V^\cpl\neq\es$.
\end{lemma}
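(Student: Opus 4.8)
The plan is to unwind Lemma~\ref{L:ArRelJirr} together with the description of~$\bq_*$ given in~\eqref{Eq:Defp*}. Since $c \neq d$, we have $\bq_* = \bq \setminus (\cce{c} \times \cce{d})$, so $\bp \cap \bq_* = \es$ is equivalent to $\bp \cap \bq \subseteq \cce{c} \times \cce{d}$; hence, by Lemma~\ref{L:ArRelJirr} (cf.\ also the observation just before Lemma~\ref{L:Ar2IntDecr}), the relation $\bp \nearrow \orth{\bq}$ amounts to the single statement
\[
  \es \neq \bp \cap \bq \subseteq \cce{c} \times \cce{d}\,.
\]
The whole task is then to translate this into the three displayed conditions, and the translation is pure bookkeeping once one keeps two facts in mind: as $a = b$, the clepsydra $\bp = (\set{a} \cup U^\cpl) \times (\set{a} \cup U)$ is a literal Cartesian product, all of whose entries lie in~$\cce{a}$ and are therefore $\eqe a$ (and this product is contained in~$\be$, since $\bp = \be \cap \pI{(\set{a} \cup U^\cpl) \times (\set{a} \cup U)}$); and $\bq = \setm{(x,y)}{c \utre x \tre y \utre d,\ x \notin V,\ y \in V}$ with $V \cup V^\cpl = \cce{c,d}$.

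For the forward direction I would pick $(x,y) \in \bp \cap \bq$ with $(x,y) \in \cce{c} \times \cce{d}$. From $(x,y) \in \bp$ one reads off $x \eqe a \eqe y$; from $(x,y) \in \cce{c} \times \cce{d}$ one reads off $x \eqe c$ and $y \eqe d$; combining, $a \eqe c \eqe d$. Next, $y \in \set{a} \cup U$ (from~$\bp$) and $y \in V$ (from~$\bq$) give $(\set{a} \cup U) \cap V \neq \es$. Finally $x \in \set{a} \cup U^\cpl$ (from~$\bp$), while $x \eqe c \eqe d$ forces $x \in \cce{c,d}$ and $x \notin V$ (from~$\bq$); hence $x \in (\set{a} \cup U^\cpl) \cap V^\cpl$, and so $(\set{a} \cup U^\cpl) \cap V^\cpl \neq \es$. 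For the converse, starting from $a \eqe c \eqe d$, one first notes that any $(x,y) \in \bp$ automatically satisfies $x,y \eqe a$, hence $x \in \cce{c}$ and $y \in \cce{d}$, so $\bp \cap \bq \subseteq \cce{c} \times \cce{d}$ holds for free. To see $\bp \cap \bq \neq \es$, choose $y_0 \in (\set{a} \cup U) \cap V$ and $x_0 \in (\set{a} \cup U^\cpl) \cap V^\cpl$: then $(x_0, y_0) \in \bp$ is immediate, whence $(x_0,y_0) \in \be$, that is, $x_0 \tre y_0$; and the remaining requirements $c \utre x_0$, $y_0 \utre d$, $x_0 \notin V$, $y_0 \in V$ for membership in $\pji{c}{d}{V}$ follow from $V, V^\cpl \subseteq \cce{c,d}$ and the choice of $x_0, y_0$. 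So $\es \neq \bp \cap \bq \subseteq \cce{c} \times \cce{d}$, and Lemma~\ref{L:ArRelJirr} finishes the argument.

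I do not expect a real obstacle: the content is entirely a matter of carefully matching the membership conditions defining~$\bp$ and~$\bq$. The only points that need a little attention are getting the exact shape of~$\bq_*$ right for $c \neq d$ (so that $\bp \cap \bq_* = \es$ becomes the containment $\bp \cap \bq \subseteq \cce{c} \times \cce{d}$), and, once $a \eqe c \eqe d$ is available, freely passing between ``$x \eqe a$'', ``$x \in \cce{c}$'', and ``$x \in \cce{c,d}$''. A mild subtlety worth flagging is that the clepsydra~$\bp$ is defined as a bare product, so one must remember it sits inside~$\be$ in order to extract the relation $x_0 \tre y_0$ needed in the converse.
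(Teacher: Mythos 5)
Your proposal is correct and follows essentially the same route as the paper: reduce via Lemma~\ref{L:ArRelJirr} and the shape of $\bq_*$ from~\eqref{Eq:Defp*} to the condition $\es\neq\bp\cap\bq\subseteq\cce{c}\times\cce{d}$, then match the membership conditions defining the clepsydra~$\bp$ and the bipartite set~$\bq$. The only cosmetic difference is that the paper gets $a\eqe c\eqe d$ by citing Lemma~\ref{L:Ar2IntDecr}, while you rederive it directly from an element of $\bp\cap\bq$; your explicit remark that the product $(\set{a}\cup U^\cpl)\times(\set{a}\cup U)$ sits inside~$\be$ is exactly the point needed to conclude $x_0\tre y_0$ in the converse.
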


\begin{proof}
Suppose first that $\bp\nearrow\orth{\bq}$. It follows from Lemma~\ref{L:Ar2IntDecr} that $a\eqe c\eqe d$. Any element $(u,v)\in\bp\cap\bq$ satisfies that $u\in(\set{a}\cup U^\cpl)\cap V^\cpl$ and $v\in(\set{a}\cup U)\cap V$. Conversely, if $a\eqe c\eqe d$, $u\in(\set{a}\cup U^\cpl)\cap V^\cpl$, and $v\in(\set{a}\cup U)\cap V$, then $(u,v)\in\bp\cap\bq$. {}From $c\neq d$, $c\eqe d$, and $\bq\subseteq\cce{c,d}\times\cce{c,d}$ it follows that $\bq_*=\bq\setminus(\cce{c}\times\cce{d})=\es$; whence $\bp\cap\bq_*=\es$.
\end{proof}

  \begin{lemma}\label{L:Araneqbcneqd}
  Suppose that $a\neq b$ and $c\neq d$. Then $\bp\nearrow\orth{\bq}$ if{f} $\cce{c,d}\subseteq\cce{a,b}$ and $\es\neq \be \cap\pI{(U^\cpl\cap V^\cpl) \times (U\cap V)}\subseteq\cce{c}\times \cce{d}$.
  \end{lemma}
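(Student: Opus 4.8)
The plan is to peel the arrow relation down to a set-theoretic statement about $\bp\cap\bq$ and then compute that intersection directly from the definitions. First I would apply Lemma~\ref{L:ArRelJirr}: $\bp\nearrow\orth{\bq}$ holds precisely when $\bp\cap\bq\neq\es$ and $\bp\cap\bq_*=\es$. Since $c\neq d$, formula~\eqref{Eq:Defp*} gives $\bq_*=\bq\setminus(\cce{c}\times\cce{d})$, hence $\bp\cap\bq_*=(\bp\cap\bq)\setminus(\cce{c}\times\cce{d})$; so the conjunction above is equivalent to the single condition $\es\neq\bp\cap\bq\subseteq\cce{c}\times\cce{d}$. (This refines the remark made just before Lemma~\ref{L:Ar2IntDecr}: the inclusion used there becomes an equality because $c\neq d$.)

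Next I would compute $\bp\cap\bq$ explicitly. Because $a\neq b$, we have $a\in U^\cpl$ and $b\in U$, so the product expression for $\pji{a}{b}{U}$ in Definition~\ref{D:abU} collapses to $\bp=\be\cap(U^\cpl\times U)$; symmetrically $\bq=\be\cap(V^\cpl\times V)$. Intersecting and using $(A\times B)\cap(C\times D)=(A\cap C)\times(B\cap D)$ yields $\bp\cap\bq=\be\cap\bigl((U^\cpl\cap V^\cpl)\times(U\cap V)\bigr)$. Plugging this into the reformulation of the previous paragraph gives exactly the condition $\es\neq\be\cap\bigl((U^\cpl\cap V^\cpl)\times(U\cap V)\bigr)\subseteq\cce{c}\times\cce{d}$ appearing in the statement, so the two sides of the claimed equivalence are already matched apart from the clause $\cce{c,d}\subseteq\cce{a,b}$.

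For that clause: in the forward direction it is handed to us by Lemma~\ref{L:Ar2IntDecr} (one could also extract it directly from $\es\neq\bp\cap\bq\subseteq\cce{c}\times\cce{d}$ by choosing $(x,y)$ in that set and chaining $\utre$ through $x\eqe c$ and $y\eqe d$). In the converse direction the clause is part of the hypothesis and plays no role beyond being assumed: the identity for $\bp\cap\bq$ holds whenever $a\neq b$ and $c\neq d$, so the hypothesised nonemptiness-and-containment gives $\es\neq\bp\cap\bq\subseteq\cce{c}\times\cce{d}$, whence $\bp\cap\bq_*=\es$, whence $\bp\nearrow\orth{\bq}$ by Lemma~\ref{L:ArRelJirr}.

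I do not expect a real obstacle: the proof is an assembly of Lemmas~\ref{L:ArRelJirr} and~\ref{L:Ar2IntDecr} with the definitional formula for $\pji{a}{b}{U}$. The only thing needing a moment's care is the collapse $\bp=\be\cap(U^\cpl\times U)$ --- i.e.\ noting $a\in U^\cpl$ and $b\in U$ when $a\neq b$ --- and remembering that $U^\cpl$ and $V^\cpl$ are complements taken inside the possibly distinct sets $\cce{a,b}$ and $\cce{c,d}$, a distinction that is harmless for the product computation.
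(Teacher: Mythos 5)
Your proof is correct and follows essentially the same route as the paper's: both reduce the arrow relation via Lemma~\ref{L:ArRelJirr} and the formula $\bq_*=\bq\setminus(\cce{c}\times\cce{d})$ to the condition $\es\neq\bp\cap\bq\subseteq\cce{c}\times\cce{d}$, invoke Lemma~\ref{L:Ar2IntDecr} for the clause $\cce{c,d}\subseteq\cce{a,b}$, and use the identity $\bp\cap\bq=\be\cap\pI{(U^\cpl\cap V^\cpl)\times(U\cap V)}$. Your explicit justification of that identity (via $a\in U^\cpl$, $b\in U$ when $a\neq b$) is a welcome bit of extra care that the paper leaves implicit.
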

  
\begin{proof}
  Suppose first that $\bp\nearrow\orth{\bq}$. It follows from
  Lemma~\ref{L:Ar2IntDecr} that $\cce{c,d}\subseteq\cce{a,b}$. Any
  element $(u,v)\in\bp\cap\bq$ belongs to $\be \cap \pI{(U^\cpl\cap V^\cpl)\times(U\cap V)}$, so that this set is nonempty. Moreover, any element $(u,v)$ of this set belongs to $\bq\setminus\bq_*$, thus
  $u\eqe c$ and $v\eqe d$.
  Conversely, suppose that $\cce{c,d}\subseteq\cce{a,b}$ and $\es\neq
  \be \cap\pI{(U^\cpl\cap V^\cpl) \times (U\cap
    V)}\subseteq\cce{c}\times \cce{d}$.  Observing that $\bp\cap\bq =
  \be \cap\pI{(U^\cpl\cap V^\cpl) \times (U\cap V)}$, it follows that
  $\bp\cap\bq$ is both nonempty and contained in
  $\cce{c}\times\cce{d}$; the latter condition implies that
  $\bp\cap\bq_* = \es$. We get therefore $\bp\nearrow\orth{\bq}$.
\end{proof}

\begin{corollary}\label{C:Araneqbcneqd}
Suppose that~$\be$ is antisymmetric, $a\neq b$, and $c\neq d$. Then $\bp\nearrow\orth{\bq}$ if{f} $(c,d)\in\bp$ and $V=(\oce{c,d}\setminus U)\cup\set{d}$.
\end{corollary}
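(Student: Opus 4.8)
The plan is to specialize Lemma~\ref{L:Araneqbcneqd} to the antisymmetric case and then read off the shape of~$V$. Since~$\be$ is antisymmetric, $\eqe$ is the identity, so $\cce{c}=\set{c}$ and $\cce{d}=\set{d}$; moreover, as noted in the proof of Lemma~\ref{L:Araneqbcneqd}, $\bp\cap\bq=\be\cap\pI{(U^\cpl\cap V^\cpl)\times(U\cap V)}$. Thus Lemma~\ref{L:Araneqbcneqd} now reads: $\bp\nearrow\orth{\bq}$ if{f} $\cce{c,d}\subseteq\cce{a,b}$ and $\bp\cap\bq=\set{(c,d)}$. Since $(c,d)\in\bq$ always holds (Lemma~\ref{L:abUClopJirr}), the equality $\bp\cap\bq=\set{(c,d)}$ forces $(c,d)\in\bp$; and $(c,d)\in\bp$ already forces $\cce{c,d}\subseteq\cce{a,b}$, because every $x\in\cce{c,d}$ satisfies $a\utre c\utre x\utre d\utre b$. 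So it suffices to prove, assuming $(c,d)\in\bp$ (equivalently $c\notin U$, $d\in U$, and $a\utre c\tre d\utre b$), that $\bp\cap\bq=\set{(c,d)}$ if{f} $V=(\oce{c,d}\setminus U)\cup\set{d}$.

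For the forward implication I would take $(x,y)\in\bp\cap\bq$, so that $c\utre x\tre y\utre d$, $x\notin U\cup V$ and $y\in U\cap V$; since $y\in U$, the shape of~$V$ gives $y=d$, whence $x\tre d$, and if $x\neq c$ then $c\utre x$ yields $c\tre x$, so $x\in\oce{c,d}\setminus U\subseteq V$, contradicting $x\notin V$. Hence $(x,y)=(c,d)$; as $(c,d)$ lies in both~$\bp$ and~$\bq$, the intersection is exactly~$\set{(c,d)}$.

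For the converse, assuming $\bp\cap\bq=\set{(c,d)}$, I would establish the two inclusions for~$V$ separately, each time exhibiting an element of~$\bp\cap\bq$ and appealing to $\bp\cap\bq=\set{(c,d)}$. For $V\subseteq(\oce{c,d}\setminus U)\cup\set{d}$: a $y\in V$ with $y\neq d$ lies in $\cce{c,d}\setminus\set{c}$, hence (antisymmetry) in $\oce{c,d}$; if additionally $y\in U$, then $(c,y)$ lies in~$\bp$ (via $a\utre c\tre y\utre b$, $c\notin U$, $y\in U$) and in~$\bq$ (via $c\tre y\utre d$, $c\notin V$, $y\in V$), forcing $(c,y)=(c,d)$, i.e.\ $y=d$, a contradiction; thus $y\in\oce{c,d}\setminus U$. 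For $(\oce{c,d}\setminus U)\cup\set{d}\subseteq V$: $d\in V$ always, and given $y\in\oce{c,d}\setminus U$ we have $y\neq d$ (since $d\in U$), so $c\tre y\tre d$; if $y\notin V$, then $(y,d)$ lies in~$\bp$ (via $a\utre y\tre d\utre b$, $y\notin U$, $d\in U$; here $a\utre y$ uses $\cce{c,d}\subseteq\cce{a,b}$) and in~$\bq$ (via $c\utre y\tre d$, $y\notin V$, $d\in V$), forcing $(y,d)=(c,d)$, i.e.\ $y=c$, contradicting $y\in\oce{c,d}$; thus $y\in V$.

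I expect the main thing to get right to be neither conceptual nor the reduction in the first paragraph, but the precise bookkeeping in the last one: for each manufactured pair one must keep track of exactly which of the constraints ``in~$U$'', ``not in~$U$'', ``in~$V$'', ``not in~$V$'' it has to satisfy in order to lie in~$\bp$ and in~$\bq$, and antisymmetry of~$\be$ has to be invoked at each point where $\utre$ between distinct elements is upgraded to~$\tre$. The inclusion $\oce{c,d}\setminus U\subseteq V$ is the heart of the matter; everything else is a routine unwinding of Definition~\ref{D:abU}.
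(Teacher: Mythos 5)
Your route is the intended one: the paper offers no separate argument for this corollary, treating it as the specialization of Lemma~\ref{L:Araneqbcneqd} to the antisymmetric case via $\cce{c}=\set{c}$, $\cce{d}=\set{d}$, and your reduction to ``$\bp\cap\bq=\set{(c,d)}$ if{f} $V=(\oce{c,d}\setminus U)\cup\set{d}$, given $(c,d)\in\bp$'' is correct, as are your first direction and the inclusion $V\subseteq(\oce{c,d}\setminus U)\cup\set{d}$.

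There is, however, a genuine gap in the step you yourself single out as the heart of the matter. In proving $\oce{c,d}\setminus U\subseteq V$ you derive $y=c$ and declare this to contradict $y\in\oce{c,d}$; but $c\in\oce{c,d}$ is possible, since antisymmetry of~$\be$ does not exclude $c\tre c$ (the paper explicitly warns that $a$ may belong to $\oce{a,b}$). When $(c,c)\in\be$ there is no contradiction, and indeed the inclusion --- and the corollary as stated --- fail. Concretely, take $E=\set{0,1,2}$ and $\be=\set{(0,1),(0,2),(1,1),(1,2)}$, which is transitive and antisymmetric; with $\bp=\pji{0}{2}{\set{2}}=\set{(0,2),(1,2)}$ and $\bq=\pji{1}{2}{\set{2}}=\set{(1,2)}$ one has $\bq_*=\es$, hence $\bp\cap\bq\neq\es$ and $\bp\cap\bq_*=\es$, so $\bp\nearrow\orth{\bq}$ by Lemma~\ref{L:ArRelJirr}; yet $\oce{1,2}=\set{1,2}$, so $(\oce{1,2}\setminus\set{2})\cup\set{2}=\set{1,2}\neq\set{2}=V$ (and no admissible $V$ can equal that set, since $1\notin V$ is forced by $(1,2,V)\in\cF(\be)$). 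So your argument cannot be completed as written; it becomes correct, and the statement true, under the additional hypothesis that $(c,c)\notin\be$ --- in particular for strict orderings --- or after replacing $\oce{c,d}$ by $\oce{c,d}\setminus\set{c}$ in the displayed formula for~$V$. You should state explicitly which of these repairs you adopt rather than passing over the reflexive point $c\tre c$ silently.
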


\section{Bounded lattices of regular closed sets}\label{S:Bounded}

Let~$\be$ be a transitive relation on a set~$E$. Suppose, until the statement of Proposition~\ref{P:CharDrel}, that~$\be$ is \emph{antisymmetric} (i.e., the preordering~$\utre$ is an ordering), then some information can be added to the results of Section~\ref{S:JoinDep}. First of all, the clepsydras (cf. Definition~\ref{D:abU}) are exactly the singletons $\set{(a,a)}$, where $a\tre a$. On the other hand, if $a\neq b$, then $\pji{a}{b}{U}$ determines both the ordered pair $(a,b)$ and the set~$U$. (Recall that a clepsydra is never bipartite, so it cannot be of the form $\pji{a}{b}{U}$ with $a\neq b$.)

Let us focus for a while on arrow relations involving bipartite \jirr\ clopen sets. We set
 \[
 \Res{U}{c,d}=(U\cap\oce{c,d})\cup\set{d}\,,\quad\text{for all }(c,d)\in\be
 \text{ and all }U\subseteq E\,.
 \]
 Observe that $(c,d,\Res{U}{c,d})\in\cF(\be)$. The following lemma is an
 easy consequence of the antisymmetry of~$\be$ together with
 Lemma~\ref{L:Araneqbcneqd}.

\begin{lemma}\label{L:Ar2tilde}
  Let $(a,b,U),(c,d,V)\in\cF(\be)$ with $a\neq b$ and $c\neq d$, and
  set $\widetilde{U}=\Res{(E\setminus U)}{a,b}$. Then
  $\pji{a}{b}{U}\nearrow\orth{\pji{c}{d}{V}}$ if{f}
  $\cce{c,d}\subseteq\cce{a,b}$ and $V=\Res{\widetilde{U}}{c,d}$.
\end{lemma}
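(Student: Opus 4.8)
The plan is to derive Lemma~\ref{L:Ar2tilde} directly from Lemma~\ref{L:Araneqbcneqd} by unwinding the set-theoretic conditions. Lemma~\ref{L:Araneqbcneqd} tells us that, for $a\neq b$ and $c\neq d$, the relation $\pji{a}{b}{U}\nearrow\orth{\pji{c}{d}{V}}$ holds if{f} $\cce{c,d}\subseteq\cce{a,b}$ together with
 \[
 \es\neq\be\cap\pI{(U^\cpl\cap V^\cpl)\times(U\cap V)}\subseteq\cce{c}\times\cce{d}\,,
 \]
where $U^\cpl=\cce{a,b}\setminus U$ and $V^\cpl=\cce{c,d}\setminus V$. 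So I must show that, \emph{assuming} $\cce{c,d}\subseteq\cce{a,b}$, this latter condition is equivalent to the single equation $V=\Res{\widetilde U}{c,d}$, where $\widetilde U=\Res{(E\setminus U)}{a,b}=\pI{(E\setminus U)\cap\oce{a,b}}\cup\set b$.

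First I would record, under the hypothesis $\cce{c,d}\subseteq\cce{a,b}$, a clean description of the set $\widetilde U\cap\cce{c,d}$: since $\cce{c,d}\subseteq\cce{a,b}$, intersecting $\widetilde U$ with $\cce{c,d}$ kills the $\oce{a,b}$-restriction except insofar as it matters, and one checks that $\widetilde U\cap\cce{c,d}=\pI{\cce{c,d}\setminus U}\cup(\set b\cap\cce{c,d})=U^\cpl\cap\cce{c,d}$ modulo the single element $b$; this is where the antisymmetry of~$\be$ is used, to guarantee that $\cce c$ and $\cce d$ are singletons and that the ``boundary'' elements $a,b,c,d$ behave rigidly. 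Consequently $\Res{\widetilde U}{c,d}=(\widetilde U\cap\oce{c,d})\cup\set d$, and after the same bookkeeping this equals $(U^\cpl\cap\oce{c,d})\cup\set d$. Thus the candidate equation $V=\Res{\widetilde U}{c,d}$ says precisely that $V=(U^\cpl\cap\oce{c,d})\cup\set d$, i.e.\ $V\setminus\set d=U^\cpl\cap\oce{c,d}=\cce{c,d}\setminus(U\cup\cce c)$, using again that $\cce c=\set c$.

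Next I would translate the displayed ``nonempty-and-small'' condition of Lemma~\ref{L:Araneqbcneqd} into the same equation. Writing $\bq=\pji cdV$, recall $\bq\cap\bq_*^{\cpl}=\cce c\times\cce d=\set{(c,d)}$ by antisymmetry, so the condition $\es\neq\bp\cap\bq\subseteq\cce c\times\cce d$ becomes exactly $(c,d)\in\bp$ and $\bp\cap\bq=\set{(c,d)}$. Spelling out $\bp\cap\bq=\be\cap\pI{(U^\cpl\cap V^\cpl)\times(U\cap V)}$ and using $c\tre d$ with antisymmetry, ``$\bp\cap\bq=\set{(c,d)}$'' forces $U^\cpl\cap V^\cpl=\set c$ and $U\cap V=\set d$ (one inclusion is immediate; for the other, any stray element would produce a second pair in $\bp\cap\bq$, since $c\tre x$ or $x\tre d$ for $x\in\cce{c,d}$). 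Conversely these two equalities plus $(c,d)\in\bp$ give back the displayed condition. Finally, $U^\cpl\cap V^\cpl=\set c$ together with $U\cap V=\set d$ and $V\subseteq\cce{c,d}$ is elementary-set-theoretically equivalent to $V=\cce{c,d}\setminus\bigl(U\setminus\set d\bigr)\setminus\set c$, which is the equation $V=\Res{\widetilde U}{c,d}$ obtained above; this is the crux computation and should be done by checking membership of an arbitrary $x\in\cce{c,d}$ in both sides, splitting on $x=c$, $x=d$, and $x\notin\set{c,d}$. The condition $(c,d)\in\bp$, i.e.\ $a\utre c\tre d\utre b$ with $c\notin U$ and $d\in U$, is subsumed once $\cce{c,d}\subseteq\cce{a,b}$ holds and $V$ has the prescribed form, since then $d\in V\subseteq U^\cpl{}^\cpl$ forces $d\in U$ and $c\in U^\cpl$ forces $c\notin U$.

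The main obstacle is purely bookkeeping: keeping straight the three complementation operations $U^\cpl$ (complement inside $\cce{a,b}$), $V^\cpl$ (complement inside $\cce{c,d}$), and $E\setminus U$ (complement inside $E$), and verifying that under $\cce{c,d}\subseteq\cce{a,b}$ and antisymmetry they interact so that $\Res{(E\setminus U)}{a,b}$ restricted appropriately reproduces the complement of $U$ inside the interval $\cce{c,d}$, up to the forced endpoints $c,d$. There is no conceptual difficulty beyond Lemma~\ref{L:Araneqbcneqd}; the statement is ``an easy consequence'' precisely because every step is a finite case-check on which of $c,d$ an element equals.
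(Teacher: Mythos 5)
Your overall route is the intended one: the paper offers no proof of this lemma beyond the remark that it follows from antisymmetry together with Lemma~\ref{L:Araneqbcneqd}, and your reduction of $\bp\nearrow\orth{\bq}$ to the single equation $\bp\cap\bq=\set{(c,d)}$, and then to the pair of equalities $U^\cpl\cap V^\cpl=\set{c}$ and $U\cap V=\set{d}$, is correct and is exactly the computation the authors must have in mind. The ``only if'' direction of your argument (the one actually used in Proposition~\ref{P:CharDrel}) is sound, modulo one parenthetical caveat below.

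The last step of your ``if'' direction, however, has a genuine gap. You claim that the conditions $c\notin U$ and $d\in U$ (i.e., $(c,d)\in\pji{a}{b}{U}$) are ``subsumed'' by $\cce{c,d}\subseteq\cce{a,b}$ and $V=\Res{\widetilde{U}}{c,d}$, justified by ``$d\in V\subseteq U^{\cpl\cpl}$ forces $d\in U$''. Since $U^{\cpl\cpl}=U$ this would assert $V\subseteq U$, which is false ($V\setminus\set{d}$ lies in the \emph{complement} of $U$); and $d\in\Res{\widetilde{U}}{c,d}$ holds unconditionally by the definition of $\Res{}{}$, so the equation carries no information about whether $d\in U$. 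These conditions genuinely do not follow: take $E=\set{1,2,3}$ with $\be$ the strict order $1<2<3$, $(a,b,U)=(1,3,\set{2,3})$ and $(c,d,V)=(2,3,\set{3})$. Then $\cce{c,d}\subseteq\cce{a,b}$, $\widetilde{U}=\set{3}$, and $V=\Res{\widetilde{U}}{c,d}=\set{3}$, yet $\bp=\set{(1,2),(1,3)}$ and $\bq=\set{(2,3)}$ are disjoint, so $\bp\nearrow\orth{\bq}$ fails (because $c=2\in U$). So this is not merely a hole in your write-up: the ``if'' direction of the statement as printed is false, and the hypothesis $\cce{c,d}\subseteq\cce{a,b}$ must be strengthened to $(c,d)\in\pji{a}{b}{U}$, exactly as in Corollary~\ref{C:Araneqbcneqd}. (A second, smaller slip: your identity equating $\Res{\widetilde{U}}{c,d}$ with $\pI{\cce{c,d}\setminus(U\cup\set{c})}\cup\set{d}$ silently assumes $c\notin\oce{c,d}$, i.e., $(c,c)\notin\be$; when $c\tre c$ and $c\notin U$, the element $c$ lies in $\Res{\widetilde{U}}{c,d}$ but never in $V$, so in the antisymmetric-but-not-irreflexive setting of Section~\ref{S:Bounded} even the corrected equivalence needs $c$ removed from the right-hand side.)
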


This yields, in the finite case, a characterization of the join-dependency relation on the \jirr\ clopen sets.

\begin{proposition}\label{P:CharDrel}
  Suppose that~$E$ is finite, $\be$ is antisymmetric, and let
  $(a_0,b_0,U_0)$, $(a_1,b_1,U_1)\in\cF(\be)$ with $a_0\neq b_0$ and
  $a_1\neq b_1$. Set $\bp_i=\pji{a_i}{b_i}{U_i}$ for $i<2$. Then
  $\bp_0\Dr\bp_1$ in the lattice $\Reg(\be)$ if{f}
  $\cce{a_1,b_1}\subsetneqq\cce{a_0,b_0}$ and $U_1=\Res{U_0}{a_1,b_1}$.
\end{proposition}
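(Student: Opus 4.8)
The plan is to convert $\bp_0\Dr\bp_1$ into a chain of two arrow relations passing through an auxiliary \jirr\ clopen set, and then to reduce everything to a handful of elementary identities about the operator $\Res{X}{c,d}=(X\cap\oce{c,d})\cup\set{d}$. Since $E$ is finite, so is $\Reg(\be)$, and Lemma~\ref{L:Arr2D} applies. By Lemma~\ref{L:BasicOrth} together with Theorem~\ref{T:JirrRege}, the completely \mirr\ elements of $\Reg(\be)$ are exactly the sets $\orth{\br}=\be\setminus\br$ with $\br$ ranging over the \jirr\ clopen subsets, and $\orth{\br}\searrow\bp_1$ if{f} $\br\nearrow\orth{\bp_1}$; hence $\bp_0\Dr\bp_1$ if{f} there is a \jirr\ clopen set $\br$ with $\bp_0\nearrow\orth{\br}$ and $\br\nearrow\orth{\bp_1}$. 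First I would rule out that such a $\br$ be a clepsydra: if $\br=\pji{c}{c}{W}$, then $\bp_0\nearrow\orth{\br}$ and Lemma~\ref{L:Arc=d} would force $a_0=b_0$, contrary to hypothesis; so $\br=\pji{c}{d}{V}$ with $c\neq d$ for some $(c,d,V)\in\cF(\be)$, and conversely every such set is \jirr\ clopen by Corollary~\ref{C:abUClopJirr}.

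Writing $\widetilde{U}=\Res{(E\setminus U)}{a,b}$ for $(a,b,U)\in\cF(\be)$ with $a\neq b$, I would then apply Lemma~\ref{L:Ar2tilde} to each of the two arrows. From $\bp_0\nearrow\orth{\br}$ one gets $\cce{c,d}\subseteq\cce{a_0,b_0}$ and $V=\Res{\widetilde{U_0}}{c,d}$; from $\br\nearrow\orth{\bp_1}$ one gets $\cce{a_1,b_1}\subseteq\cce{c,d}$ and $U_1=\Res{\widetilde{V}}{a_1,b_1}$, where this time $\widetilde{V}=\Res{(E\setminus V)}{c,d}$. The technical core is the following list of identities, valid for all $X\subseteq E$ and $(c,d)\in\be$: the map $\Res{(-)}{c,d}$ is idempotent; if $(a,b)\in\be$ and $\cce{c,d}\subseteq\cce{a,b}$ then $\oce{c,d}\subseteq\oce{a,b}$, whence $\Res{\Res{X}{a,b}}{c,d}=\Res{X}{c,d}$; if $c\neq d$ then $\Res{\pI{E\setminus\Res{(E\setminus U)}{c,d}}}{c,d}=\Res{U}{c,d}$; and $\Res{U}{a,b}=U$ whenever $(a,b,U)\in\cF(\be)$ with $a\neq b$ (because then $U\subseteq\oce{a,b}$). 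These are routine to check, the only subtlety being that whether $c\in\oce{c,d}$ depends on whether the loop $c\tre c$ holds. Granting them, $V=\Res{\widetilde{U_0}}{c,d}=\Res{(E\setminus U_0)}{c,d}$, hence $\widetilde{V}=\Res{U_0}{c,d}$, hence $U_1=\Res{\Res{U_0}{c,d}}{a_1,b_1}=\Res{U_0}{a_1,b_1}$, applying the three main identities in turn. Combined with $\cce{a_1,b_1}\subseteq\cce{c,d}\subseteq\cce{a_0,b_0}$, this gives the ``only if'' direction except for strictness of the inclusion.

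For strictness, I would argue that if $\cce{a_1,b_1}=\cce{a_0,b_0}$, then all three intervals above coincide; since $(E,\utre)$ is a poset, comparing the least and the greatest elements of these intervals forces $a_0=a_1=c$ and $b_0=b_1=d$, and then the equality $U_1=\Res{U_0}{a_0,b_0}$ just obtained collapses to $U_1=U_0$ (by the last identity), so that $\bp_0=\bp_1$, contradicting $\bp_0\Dr\bp_1$. Hence $\cce{a_1,b_1}\subsetneqq\cce{a_0,b_0}$.

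For the converse, assume $\cce{a_1,b_1}\subsetneqq\cce{a_0,b_0}$ and $U_1=\Res{U_0}{a_1,b_1}$, and take as witness $\br=\pji{a_1}{b_1}{V}$ with $V=\Res{\widetilde{U_0}}{a_1,b_1}$; since $(a_1,b_1,V)\in\cF(\be)$ with $a_1\neq b_1$, $\br$ is \jirr\ clopen. Lemma~\ref{L:Ar2tilde} gives $\bp_0\nearrow\orth{\br}$ immediately, while $\br\nearrow\orth{\bp_1}$ (applying Lemma~\ref{L:Ar2tilde} with $c=a_1$, $d=b_1$) follows once more from the identities above: $V=\Res{(E\setminus U_0)}{a_1,b_1}$, so $\widetilde{V}=\Res{U_0}{a_1,b_1}=U_1$, so $\Res{\widetilde{V}}{a_1,b_1}=\Res{U_1}{a_1,b_1}=U_1$. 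Then $\bp_0\nearrow\orth{\br}$ and $\br\nearrow\orth{\bp_1}$ yield $\bp_0\Dr\bp_1$ via the reduction of the first paragraph. The main obstacle is the careful bookkeeping behind the $\Res{(-)}{}$-identities (in particular the loop cases $x\tre x$) and checking throughout that the intermediate triples genuinely belong to $\cF(\be)$; once that is in place, everything else is a short substitution.
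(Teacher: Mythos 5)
Your ``only if'' direction is essentially the paper's own argument: decompose $\Dr$ through an intermediate \jirr\ element via Lemma~\ref{L:Arr2D}, exclude clepsydras with Lemma~\ref{L:Arc=d}, apply the necessary conditions of Lemma~\ref{L:Ar2tilde} to each arrow, compose the $\Res{(-)}{}$ operators, and get strictness from $\bp_0\neq\bp_1$. That part is fine. The gap is in the converse. You take the single witness $\br=\pji{a_1}{b_1}{V}$ with $V=\Res{(E\setminus U_0)}{a_1,b_1}$ and invoke the ``if'' direction of Lemma~\ref{L:Ar2tilde} to conclude $\bp_0\nearrow\orth{\br}$. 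But the conditions ``$\cce{c,d}\subseteq\cce{a,b}$ and $V=\Res{\widetilde U}{c,d}$'' are not sufficient for $\pji{a}{b}{U}\nearrow\orth{\pji{c}{d}{V}}$: by Lemma~\ref{L:Araneqbcneqd} in the antisymmetric case, the arrow also forces $c\in U^\cpl$ and $d\in U$, i.e., $(c,d)\in\pji{a}{b}{U}$ --- exactly the extra hypothesis carried by Corollary~\ref{C:Araneqbcneqd}, which is what the paper actually uses for the converse. With your choice $(c,d)=(a_1,b_1)$, nothing guarantees $(a_1,b_1)\in\bp_0$; the construction fails whenever $a_1\in U_0$ or $b_1\notin U_0$.

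Concretely, let $\be$ be the strict order $1<2<3$, and take $\bp_0=\pji{1}{3}{\set{2,3}}=\set{(1,2),(1,3)}$, $\bp_1=\pji{2}{3}{\set{3}}=\set{(2,3)}$. Here $\cce{2,3}\subsetneqq\cce{1,3}$ and $\Res{U_0}{2,3}=\set{3}=U_1$, so the proposition asserts $\bp_0\Dr\bp_1$. Your witness is $V=\Res{(E\setminus U_0)}{2,3}=\set{3}$, i.e., $\br=\bp_1$, and $\bp_0\cap\br=\es$, so $\bp_0\nearrow\orth{\br}$ fails by Lemma~\ref{L:ArRelJirr}. The paper's Case~2 repairs this by moving the source of the witness back to $a_0$: it takes $\bq=\pji{1}{3}{\set{3}}=\set{(1,3),(2,3)}$, which does meet $\bp_0$ and satisfies both arrows. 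This is the entire point of the four-case analysis you bypass: the witness $(c,d)$ must be chosen among $(a_1,b_1)$, $(a_0,b_1)$, $(a_1,b_0)$, $(a_0,b_0)$ according to whether $a_1,b_1$ lie in $U_0$ or $U_0^\cpl$, so that $(c,d)\in\bp_0$ and $(a_1,b_1)\in\bq$ both hold. To fix your proof you must either restore that case split or add and prove the missing side conditions in your reading of Lemma~\ref{L:Ar2tilde}; as it stands, the converse does not go through.
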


\begin{proof}
  Suppose first that $\bp_0\Dr\bp_1$. By Theorem~\ref{T:JirrRege},
  Lemma~\ref{L:Arr2D}, and the observations at the beginning of
  Section~\ref{S:JoinDep}, there exists $(c,d,V)\in\cF(\be)$ such
  that, setting $\bq=\pji{c}{d}{V}$, the relations
  $\bp_0\nearrow\orth{\bq}$ and $\bq\nearrow\orth{\bp}_1$ both
  hold. It follows from Corollary~\ref{C:Arc=d} that $c\neq d$. By
  Lemma~\ref{L:Ar2tilde},
  $\cce{a_1,b_1}\subseteq\cce{c,d}\subseteq\cce{a_0,b_0}$ and, setting
  $\widetilde{U}_0=\Res{(E\setminus U_0)}{a_0,b_0}$ and
  $\widetilde{V}=\Res{(E\setminus V)}{c,d}$,
  $V=\Res{\widetilde{U}_0}{c,d}$ and
  $U_1=\Res{\widetilde{V}}{a_1,b_1}$; clearly $\widetilde{V} =
  \Res{U_{0}}{c,d}$, whence $U_1=\Res{U_0}{a_1,b_1}$. Since
  $\bp_0\neq\bp_1$, it follows that
  $\cce{a_1,b_1}\subsetneqq\cce{a_0,b_0}$.

Conversely, suppose that $\cce{a_1,b_1}\subsetneqq\cce{a_0,b_0}$ and
$U_1=\Res{U_0}{a_1,b_1}$. In particular, $\bp_0\neq\bp_1$. Set
$U_0^\cpl=\cce{a_0,b_0}\setminus U_0$. We shall separate cases,
according to whether or not~$a_1$, $b_1$ belong to~$U_0$. In each of
those cases, we shall define a certain \jirr\ element
$\bq=\pji{c}{d}{V}$ of~$\Reg(\be)$, with $a_0\utre c\tre d\utre a_1$
and $V=\Res{U_0^\cpl}{c,d}$, so that only~$c$ and~$d$ will need to be
specified. Each of the desired arrow relations will be inferred with
the help of Corollary~\ref{C:Araneqbcneqd}.

\subsubsection*{Case 1} $a_1\in\set{a_0}\cup U_0^\cpl$ and $b_1\in\set{b_0}\cup U_0$. We set $c=a_1$ and $d=b_1$. Then $\bp_0\nearrow\orth{\bq}$ (because $(a_1,b_1)\in\bp_0$) and $\bq\nearrow\orth{\bp}_1$ (because $(a_1,b_1)\in\bq$).

\subsubsection*{Case 2} $a_1\in\set{a_0}\cup U_0$ and $b_1\in\set{b_0}\cup U_0$. We set $c=a_0$ and $d=b_1$. Then $\bp_0\nearrow\orth{\bq}$ (because $(a_0,b_1)\in\bp_0$) and $\bq\nearrow\orth{\bp}_1$ (because $(a_1,b_1)\in\bq$).

\subsubsection*{Case 3} $a_1\in\set{a_0}\cup U_0^\cpl$ and $b_1\in\set{b_0}\cup U_0^\cpl$. We set $c=a_1$ and $d=b_0$. Then $\bp_0\nearrow\orth{\bq}$ (because $(a_1,b_0)\in\bp_0$) and $\bq\nearrow\orth{\bp}_1$ (because $(a_1,b_1)\in\bq$).

\subsubsection*{Case 4} $a_1\in\set{a_0}\cup U_0$ and $b_1\in\set{b_0}\cup U_0^\cpl$. We set $c=a_0$ and $d=b_0$. Then $\bp_0\nearrow\orth{\bq}$ (because $(a_0,b_0)\in\bp_0$) and $\bq\nearrow\orth{\bp}_1$ (because $(a_1,b_1)\in\bq$).

\medskip
In each of those cases, $\bp_0\nearrow\orth{\bq}$ and $\bq\nearrow\orth{\bp}_1$, hence, as $\bp_0\neq\bp_1$ and by Lemma~\ref{L:Arr2D}, $\bp_0\Dr\bp_1$.
\end{proof}

By using the standard description of the congruence lattice of a
finite lattice \emph{via} the join-dependency relation (cf. Freese,
Je\v{z}ek, and Nation \cite[Section~II.3]{FJN}),
Proposition~\ref{P:CharDrel} makes it possible to give a complete description of the congruence lattice of $\Reg(\be)$ in case~$\be$ is antisymmetric. Congruence lattices of permutohedra were originally described in
Duquenne and Cherfouh~\cite[Section~4]{DuCh94}. An implicit description of congruences of permutohedra \emph{via} the join-dependency relation appears in Santocanale~\cite{San07}; in that paper, similar results were established for multinomial lattices.

By Lemma~\ref{L:Arcneqd} together with the antisymmetry of~$\be$, if $\bp\nearrow\orth{\bq}$ and~$\bp$ is a clepsydra, then so is~$\bq$. Hence, by Corollary~\ref{C:Arc=d}, $\bp\Dr\bq$ implies (still in the antisymmetric case) that neither~$\bp$ nor~$\bq$ is a clepsydra. By Proposition~\ref{P:CharDrel}, we thus obtain the following result.

\begin{corollary}\label{C:DrelStrOrd}
Let $\be$ be an antisymmetric, transitive relation on a finite set. Then the join-dependency relation on the \jirr\ elements of~$\Reg(\be)$ is a strict ordering.
\end{corollary}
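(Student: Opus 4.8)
The plan is to deduce Corollary~\ref{C:DrelStrOrd} directly from the structural description of the join-dependency relation furnished by Proposition~\ref{P:CharDrel}, together with the observations already recorded about clepsydras. First I would reduce the statement to the bipartite case: by Lemma~\ref{L:Arcneqd} and antisymmetry, an arrow $\bp\nearrow\orth{\bq}$ with $\bp$ a clepsydra forces $\bq$ to be a clepsydra as well, and then Corollary~\ref{C:Arc=d} shows no relation $\br\Dr\bq$ can hold when $\bq$ is a clepsydra; symmetrically, $\bp\Dr\br$ cannot hold when $\bp$ is a clepsydra (if $\bp\Dr\br$ then $\bp\nearrow\orth{\bs}\searrow\br$ for some $\bs$, and the clepsydra-propagation along $\nearrow$ would make $\bs$ a clepsydra, contradicting the fact established via Corollary~\ref{C:Arc=d} that nothing $\searrow$-arrows out of a clepsydra — this is exactly the remark already made in the paragraph preceding the corollary). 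So $\Dr$ is entirely concentrated on the bipartite \jirr\ elements, and there Proposition~\ref{P:CharDrel} applies verbatim.

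Next I would verify the three defining properties of a strict ordering for $\Dr$ restricted to bipartite \jirr\ clopen sets. Irreflexivity is built into the definition of $\Dr$ ($p\neq q$ is part of the definition of $p\Dr q$). For antisymmetry and transitivity, the key is the first clause of the Proposition~\ref{P:CharDrel} criterion: $\bp_0\Dr\bp_1$ implies $\cce{a_1,b_1}\subsetneqq\cce{a_0,b_0}$. Thus the map sending a bipartite \jirr\ clopen set $\pji{a}{b}{U}$ to the interval $\cce{a,b}$ is strictly order-reversing from $(\Dr,\text{transitive hull})$ into the poset of such intervals ordered by inclusion. Since $E$ is finite, proper-inclusion chains of intervals are finite and in particular acyclic; this immediately gives that $\Dr$ cannot have a cycle $\bp_0\Dr\bp_1\Dr\cdots\Dr\bp_0$, because such a cycle would produce $\cce{a_0,b_0}\subsetneqq\cce{a_0,b_0}$. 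This simultaneously yields antisymmetry (a $2$-cycle is forbidden).

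For transitivity, suppose $\bp_0\Dr\bp_1$ and $\bp_1\Dr\bp_2$ (all bipartite). From Proposition~\ref{P:CharDrel}, $\cce{a_1,b_1}\subsetneqq\cce{a_0,b_0}$, $U_1=\Res{U_0}{a_1,b_1}$, $\cce{a_2,b_2}\subsetneqq\cce{a_1,b_1}$, and $U_2=\Res{U_1}{a_2,b_2}$; I want to conclude $\cce{a_2,b_2}\subsetneqq\cce{a_0,b_0}$ and $U_2=\Res{U_0}{a_2,b_2}$. The interval inclusion is transitive and remains proper, so the first half is immediate. For the second, the point is that the operator $U\mapsto\Res{U}{c,d}=(U\cap\oce{c,d})\cup\set{d}$ composes correctly along nested intervals: since $\cce{a_2,b_2}\subseteq\cce{a_1,b_1}$ one has $\oce{a_2,b_2}\subseteq\oce{a_1,b_1}$ and $d_2=b_2\in\cce{a_1,b_1}$, so restricting $U_1$ to $\oce{a_2,b_2}$ and adjoining $b_2$ is the same as first restricting $U_0$ to $\oce{a_1,b_1}$, adjoining $b_1$, then restricting to $\oce{a_2,b_2}$ and adjoining $b_2$ — the intermediate element $b_1$ is washed out because $b_1\notin\oce{a_2,b_2}$ whenever $\cce{a_2,b_2}\subsetneqq\cce{a_1,b_1}$ in an antisymmetric relation. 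This is a short set-theoretic verification of the identity $\Res{(\Res{U_0}{a_1,b_1})}{a_2,b_2}=\Res{U_0}{a_2,b_2}$ under the nesting hypothesis. The main obstacle — really the only non-bookkeeping point — is precisely this composition identity for the $\Res{}{}$ operator; once it is checked, transitivity follows from Proposition~\ref{P:CharDrel}, and combined with irreflexivity and the acyclicity argument above, $\Dr$ is a strict ordering on all of $\Ji(\Reg(\be))$.
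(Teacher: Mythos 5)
Your proof is correct and follows essentially the same route as the paper: the reduction to bipartite \jirr\ elements via Lemma~\ref{L:Arcneqd} (clepsydra propagation along $\nearrow$ under antisymmetry) and Corollary~\ref{C:Arc=d} is exactly the paragraph preceding the corollary, and the remainder is Proposition~\ref{P:CharDrel} together with the composition identity $\Res{\pI{\Res{U_0}{a_1,b_1}}}{a_2,b_2}=\Res{U_0}{a_2,b_2}$, which the paper leaves implicit and which you rightly single out as the only point needing verification. One micro-correction to your justification of that identity: $b_1\in\oce{a_2,b_2}$ can in fact occur (antisymmetry then forces $b_1=b_2$, e.g.\ for $1\tre 2\tre 3$ with $(a_1,b_1)=(1,3)$ and $(a_2,b_2)=(2,3)$), but this case is harmless because the stray element is absorbed by the $\cup\set{b_2}$ in the definition of $\Res{U_1}{a_2,b_2}$.
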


\begin{example}\label{Ex:DNotTrans}
The transitivity of the~$\Dr$ relation, holding on the \jirr\ elements of~$\Reg(\be)$ for any antisymmetric transitive relation~$\be$, is quite a special property. It does not hold in all finite bounded homomorphic images of free lattices, as shows the lattice~$\sL_9$ (following the notation of Jipsen and Rose~\cite{JiRo}) represented on the left hand side of Figure~\ref{Fig:L9}. The \jirr\ elements marked there by doubled circles satisfy $p\Dr q$ and $q\Dr r$ but not $p\Dr r$.

\begin{figure}[htb]
\includegraphics{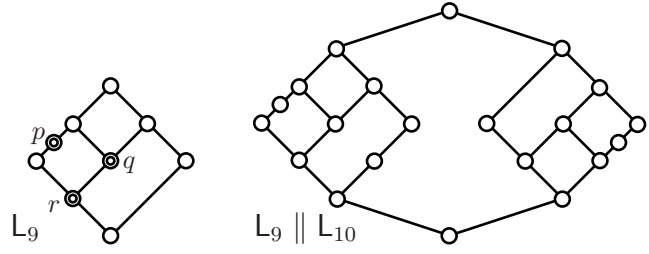}
\caption{Bounded lattices with non-transitive join-de\-pen\-den\-cy relation}
\label{Fig:L9}
\end{figure}

The lattice~$\sL_9$ is not orthocomplemented, but its parallel sum
with its dual lattice~$\sL_{10}$, denoted there by
$\sL_9\parallel\sL_{10}$, is orthocomplemented. As~$\sL_9$, the parallel sum is bounded and has non-transitive~$\Dr$ relation.

In the non-bounded case, the reflexive closure of the~$\Dr$ relation on~$\Reg(\be)$ may not be transitive. This is witnessed by the lattice $\Bip(3)=\Reg([3]\times[3])$, see Lemmas~\ref{L:DRelonBipart} and~\ref{L:TCDBipBip}.
\end{example}

\begin{definition}\label{D:Orthei}
A family $\vecm{\be_i}{i\in I}$ of \emph{pairwise disjoint} transitive relations is \emph{orthogonal} if there are no distinct $i,j\in I$ and no $p,q,r$ such that $p\neq q$, $q\neq r$, $(p,q)\in\be_i$, and $(q,r)\in\be_j$.
\end{definition}

In particular, if $\vecm{\be_i}{i\in I}$ is orthogonal, then $\bigcup_{i\in I}\be_i$ is itself a transitive relation.

\begin{proposition}\label{P:Regsumei}
The following statements hold, for any orthogonal family\linebreak $\vecm{\be_i}{i\in I}$ of transitive relations:
\begin{enumerate}
\item A subset~$\bx$ of~$\be$ is closed \pup{resp., open in~$\be$} if{f} $\bx\cap\be_i$ is closed \pup{resp., open in~$\be_i$} for each $i\in I$.

\item $\Reg(\be)\cong\prod_{i\in I}\Reg(\be_i)$, \emph{via} an isomorphism that carries $\Clop(\be)$ onto $\prod_{i\in I}\Clop(\be_i)$.

\end{enumerate}

\end{proposition}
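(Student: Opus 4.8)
The plan is to prove (i) first, directly from the definition of openness, and then to bootstrap (ii) from (i) by checking that the obvious bijection is the right one. Let me write $\be=\bigcup_{i\in I}\be_i$, which is transitive by the remark following Definition~\ref{D:Orthei}. For (i), the point is that the ``critical triples'' $x\tre y\tre z$ in~$\be$ cannot straddle two of the~$\be_i$: if $(x,y)\in\be_i$ with $x\neq y$ and $(y,z)\in\be_j$ with $y\neq z$, orthogonality forces $i=j$; and if $x=y$ or $y=z$ the condition in the definition of open is vacuous for that triple (one of the two disjuncts holds trivially, since then $(x,z)=(y,z)$ or $(x,z)=(x,y)$). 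So a subset $\bx$ fails to be open in~$\be$ exactly when it fails the condition for a triple living entirely inside some single~$\be_i$ with $(x,z)\in\bx\cap\be_i$; that is exactly the failure of $\bx\cap\be_i$ to be open in~$\be_i$. The closed case is the complementary statement (or can be argued identically, using that the defining implication for closedness is about triples $x\tre y\tre z$). First I would write this out carefully for ``open'', then note ``closed'' follows by the same straddling argument applied to the complement.

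For (ii), define $\Phi\colon\Reg(\be)\to\prod_{i\in I}\Reg(\be_i)$ by $\Phi(\bx)=\vecm{\bx\cap\be_i}{i\in I}$. I first need $\bx\cap\be_i$ to actually be regular closed in~$\be_i$ when $\bx$ is regular closed in~$\be$: write $\bx=\tcl(\bu)$ with $\bu$ open (Lemma~\ref{L:BasicReg}(i)); since the~$\be_i$ partition~$\be$ and distinct blocks have no ``composable'' pairs, transitive closure does not move pairs across blocks, so $\tcl(\bu)\cap\be_i=\tcl_{\be_i}(\bu\cap\be_i)$, and $\bu\cap\be_i$ is open in~$\be_i$ by~(i); hence $\bx\cap\be_i$ is regular closed in~$\be_i$. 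For surjectivity, given $\vecm{\ba_i}{i\in I}$ with each $\ba_i\in\Reg(\be_i)$, set $\bx=\bigcup_{i\in I}\ba_i$; writing $\ba_i=\tcl_{\be_i}(\bu_i)$ and using again that $\tcl$ doesn't cross blocks, $\bx=\tcl(\bigcup_i\bu_i)$ with $\bigcup_i\bu_i$ open in~$\be$ by~(i), so $\bx\in\Reg(\be)$ and $\Phi(\bx)=\vecm{\ba_i}{i\in I}$. Injectivity is immediate since the $\be_i$ cover~$\be$. Finally $\Phi$ is an order-isomorphism: it is clearly order-preserving and order-reflecting for the inclusion orders, and a bijective order-isomorphism between lattices is a lattice isomorphism. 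That $\Phi$ carries $\Clop(\be)$ onto $\prod_i\Clop(\be_i)$ is then the combination of~(i) (for openness) with the block-respecting behaviour of~$\tcl$ (for closedness): $\bx$ is clopen iff $\bx=\tcl(\bx)=\tin(\bx)$ iff each $\bx\cap\be_i$ satisfies the same, again because $\tcl$ and $\tin$ act blockwise.

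The only place that needs real care—and the step I would flag as the main obstacle—is the claim that transitive closure and interior act ``blockwise'', i.e.\ $\tcl(\bu)\cap\be_i=\tcl_{\be_i}(\bu\cap\be_i)$ and the analogue for $\tin$. The inclusion $\supseteq$ is trivial; for $\subseteq$ one takes a pair $(x,y)\in\tcl(\bu)\cap\be_i$, witnessed by a chain $x=z_0\tre z_1\tre\cdots\tre z_n=y$ of pairs in~$\bu$, and must argue every $z_k\utre z_{k+1}$ actually lies in~$\be_i$; this uses orthogonality to rule out the chain leaving block~$i$ (if consecutive pairs $(z_{k-1},z_k)$, $(z_k,z_{k+1})$ were in different blocks with all three distinct, that contradicts Definition~\ref{D:Orthei}; degenerate cases where some $z_k=z_{k+1}$ are handled by deleting repeats). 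For $\tin$ the argument is dual, working with subdivisions. Once this blockwise behaviour is established, everything else is bookkeeping, and I would present~(i) and the blockwise lemma as the two substantive ingredients and then assemble~(ii) from them in a few lines.
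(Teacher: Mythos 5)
Your proposal is correct and follows essentially the same route as the paper, which defines the same maps $\bx\mapsto\vecm{\bx\cap\be_i}{i\in I}$ and $\vecm{\bx_i}{i\in I}\mapsto\bigcup_{i\in I}\bx_i$ and declares the verification a ``straightforward (although somewhat tedious) exercise'' based on part~(i). You have simply supplied the details the paper omits, correctly isolating the blockwise behaviour of $\tcl$ and $\tin$ (with the needed handling of degenerate triples and repeated chain entries) as the substantive point.
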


\begin{proof}
The proof of~(i) is a straightforward exercise. For~(ii), we define $\varphi(\bx)=\vecm{\bx\cap\be_i}{i\in I}$ whenever $\bx\subseteq\be$, and $\psi\vecm{\bx_i}{i\in I}=\bigcup_{i\in I}\bx_i$ whenever all $\bx_i\subseteq\be_i$. By using~(i), it is straightforward (although somewhat tedious) to verify that~$\varphi$ and~$\psi$ restrict to mutually inverse isomorphisms between $\Reg(\be)$ and $\prod_{i\in I}\Reg(\be_i)$, and also between $\Clop(\be)$ and $\prod_{i\in I}\Clop(\be_i)$.
\end{proof}

Set $\Delta_A=\setm{(x,x)}{x\in A}$, for every set~$A$. By applying Proposition~\ref{P:Regsumei} to the $2$-element family $(\be,\Delta_A)$, we obtain the following result, which shows that $\Reg(\be\cup\Delta_A)$ is the product of~$\Reg(\be)$ by a powerset lattice.

\begin{corollary}\label{C:Regsumei}
Let~$\be$ be a transitive relation and let~$A$ be a set with $\be\cap\Delta_A=\es$. Then $\Reg(\be\cup\Delta_A)\cong\Reg(\be)\times(\Pow A)$ and $\Clop(\be\cup\Delta_A)\cong \Clop(\be)\times(\Pow A)$.
\end{corollary}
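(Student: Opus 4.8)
The plan is to apply Proposition~\ref{P:Regsumei} to the two-element family $(\be,\Delta_A)$, after checking that this family is orthogonal, and then to identify $\Reg(\Delta_A)$ and $\Clop(\Delta_A)$ with the powerset lattice $\Pow A$.

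First I would verify the hypotheses of Proposition~\ref{P:Regsumei}. Both $\be$ and $\Delta_A$ are transitive: $\be$ by assumption, and $\Delta_A$ trivially, since $(x,y),(y,z)\in\Delta_A$ forces $x=y=z$. They are disjoint by the assumption $\be\cap\Delta_A=\es$. Orthogonality (Definition~\ref{D:Orthei}) holds vacuously: if $(p,q)\in\Delta_A$ then $p=q$, and if $(q,r)\in\Delta_A$ then $q=r$, so there can be no $p,q,r$ with $p\neq q$, $q\neq r$ and one of $(p,q)$, $(q,r)$ lying in $\Delta_A$. Thus Proposition~\ref{P:Regsumei}(ii) yields an isomorphism $\Reg(\be\cup\Delta_A)\cong\Reg(\be)\times\Reg(\Delta_A)$ that carries $\Clop(\be\cup\Delta_A)$ onto $\Clop(\be)\times\Clop(\Delta_A)$.

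Next I would identify the two ``trivial'' factors. By the same argument as above, every subset of $\Delta_A$ is transitive; hence every $\ba\subseteq\Delta_A$ is closed, and also open relatively to $\Delta_A$, since its complement $\Delta_A\setminus\ba$ is again a subset of $\Delta_A$ and therefore closed. Consequently both $\tcl$ and $\tin$ act as the identity on subsets of $\Delta_A$, so every subset of $\Delta_A$ is clopen, a fortiori regular closed; that is, $\Clop(\Delta_A)=\Reg(\Delta_A)=\Pow(\Delta_A)$, ordered by inclusion. The bijection $(x,x)\mapsto x$ is then an order isomorphism from $\Pow(\Delta_A)$ onto $\Pow A$. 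Combining this with the previous paragraph gives $\Reg(\be\cup\Delta_A)\cong\Reg(\be)\times\Pow A$ and $\Clop(\be\cup\Delta_A)\cong\Clop(\be)\times\Pow A$, as desired.

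There is essentially no obstacle here: the only points requiring (minimal) care are the vacuous verification of orthogonality and the observation that the transitive-closure and interior operators are trivial on a relation all of whose pairs are loops. Everything else is a direct invocation of Proposition~\ref{P:Regsumei}.
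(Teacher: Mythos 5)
Your proposal is correct and follows exactly the paper's route: the paper also obtains this corollary by applying Proposition~\ref{P:Regsumei} to the two-element family $(\be,\Delta_A)$ and identifying $\Reg(\Delta_A)=\Clop(\Delta_A)$ with $\Pow A$. Your explicit verifications of orthogonality and of the triviality of $\tcl$ and $\tin$ on subsets of $\Delta_A$ are accurate fillings-in of details the paper leaves implicit.
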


Duquenne and Cherfouh \cite[Theorem~3]{DuCh94} and Le Conte de Poly-Barbut \cite[Lemme~9]{Poly94} proved that every permutohedron is semidistributive (in the latter paper the result was extended to all \emph{Coxeter lattices}). This result was improved by Caspard~\cite{Casp00}, who proved that every permutohedron is a bounded homomorphic image of a free lattice; and later, by Caspard, Le Conte de Poly-Barbut, and Morvan~\cite{CLM04}, who extended this result to all finite Coxeter groups. Our next result shows exactly to which transitive (not necessarily antisymmetric) relations those results can be extended.

\begin{theorem}\label{T:RegeSD}
The following are equivalent, for any transitive relation~$\be$ on a finite set~$E$:
\begin{enumerate}
\item The lattice~$\Reg(\be)$ is a bounded homomorphic image of a free lattice.

\item The lattice~$\Reg(\be)$ is semidistributive.

\item The lattice~$\Reg(\be)$ is pseudocomplemented.

\item Every connected component of the preordering~$\utre$ either is antisymmetric or has the form $\set{a,b}$ with $a\neq b$ while $(a,b)\in\be$ and $(b,a)\in\be$.
\end{enumerate}
\end{theorem}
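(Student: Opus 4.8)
The plan is to prove the cycle (iv)$\Rightarrow$(i)$\Rightarrow$(ii)$\Rightarrow$(iii)$\Rightarrow$(iv). For (iv)$\Rightarrow$(i), I would invoke Corollary~\ref{C:Regsumei} together with Proposition~\ref{P:Regsumei} to decompose~$\Reg(\be)$ as a finite product: if (iv) holds, the connected components of~$\utre$ form an orthogonal family, so $\Reg(\be)$ is a direct product of factors of the form $\Reg(\be_i)$, where each~$\be_i$ is either an antisymmetric transitive relation or a two-element full relation $\set{a,b}\times\set{a,b}$ possibly with one or both loops; the loops split off as Boolean powerset factors by Corollary~\ref{C:Regsumei}, so it suffices to show each remaining factor is a bounded homomorphic image of a free lattice. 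For the antisymmetric factors this follows from Corollary~\ref{C:DrelStrOrd}: the join-dependency relation on $\Ji\Reg(\be_i)$ is a strict ordering (hence cycle-free), and by self-duality of $\Reg(\be_i)$ (Corollary~\ref{C:RegP2RegopP}) the dual join-dependency relation is too, so by the Freese--Je\v{z}ek--Nation criterion $\Reg(\be_i)$ is bounded. For a two-element full relation, $\Reg(\be_i)$ is a small explicit lattice (it is $\Bip(2)$, a Boolean lattice or close to it) which one checks directly to be bounded; since the class of bounded lattices is closed under finite products, (i) follows.

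The implication (i)$\Rightarrow$(ii) is immediate from the fact that every bounded homomorphic image of a free lattice is semidistributive (cited in Section~\ref{S:NotaTerm}). For (ii)$\Rightarrow$(iii), I would argue that a finite semidistributive lattice that is also a lattice with an orthocomplementation is pseudocomplemented: since $\Reg(\be)$ is \msd\ (being semidistributive), for each $\bx$ the set $\setm{\by}{\bx\wedge\by=\es}$ is closed under finite joins by \msdy, hence, being finite, has a greatest element. Actually the cleanest route is to observe that in any finite \msd\ lattice every element has a pseudocomplement, which is the standard argument; alternatively one derives (iii) directly from (iv) via the product decomposition, since each factor is pseudocomplemented (antisymmetric factors because they are bounded hence \msd, the small full-relation factors by inspection) and pseudocomplementedness is preserved by finite products.

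The real content is (iii)$\Rightarrow$(iv), which I would prove by contraposition: if (iv) fails, exhibit a failure of pseudocomplementedness. If some connected component of~$\utre$ is not antisymmetric and not a two-element full component, then there are elements $a_0\eqe a_1$ with $a_0\neq a_1$, and the component has at least three points or the relation is not full on the two points; in the three-point case one finds a third element~$b$ comparable to~$a_0$ in one direction, i.e.\ $b\tre a_0$ or $a_0\tre b$ with $a_0,a_1,b$ pairwise distinct, and then Proposition~\ref{P:NonPseudo} directly gives clopen sets~$\ba_0,\ba_1,\bc$ witnessing that $\Reg(\be)$ is not pseudocomplemented. The main obstacle here is the bookkeeping: I must check that ``connected component not of the allowed form'' really does produce the configuration hypothesized in Proposition~\ref{P:NonPseudo}, i.e.\ that one cannot have a non-antisymmetric component with $a_0\eqe a_1$, $a_0\neq a_1$, yet \emph{no} third element~$b$ in the component related by a single arrow to~$a_0$. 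Since the component is connected with respect to~$\utre$ and contains the distinct $\eqe$-equivalent pair $a_0,a_1$, if it has a third element~$c$ then connectedness forces a $\utre$-path from~$c$ to~$a_0$, and taking~$b$ to be the neighbour of~$a_0$ on that path (after using transitivity to collapse) yields $b\tre a_0$ or $a_0\tre b$ with $b\notin\set{a_0,a_1}$ unless $b\eqe a_0$, in which case one replaces~$b$ by a genuinely new point further along; and if the component has exactly the two points $a_0,a_1$ but the relation is not the full $\set{a_0,a_1}\times\set{a_0,a_1}$, then by $a_0\eqe a_1$ we have $a_0\tre a_1$ and $a_1\tre a_0$, so the only way to miss ``full'' is that one loop is present, say $a_0\tre a_0$, and then transitivity gives $a_0\tre a_1\tre a_0\tre a_0$; here one takes $b=a_1$ in the role forcing the hypothesis of Proposition~\ref{P:NonPseudo}, or more simply notes that $\be$ then contains a clepsydra-type configuration producing a small nondistributive interval that is not pseudocomplemented. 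Handling this last degenerate case carefully, and confirming that the isolated loops and disjoint components interact correctly via Proposition~\ref{P:Regsumei}, is where I expect the proof to require the most care.
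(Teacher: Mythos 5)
Your proposal is correct and follows essentially the same route as the paper: (i)$\Rightarrow$(ii) by the cited Freese--Je\v{z}ek--Nation result, (ii)$\Rightarrow$(iii) by the standard finite \msd\ argument, (iii)$\Rightarrow$(iv) via Proposition~\ref{P:NonPseudo}, and (iv)$\Rightarrow$(i) by splitting into connected components with Proposition~\ref{P:Regsumei}, handling antisymmetric components by Corollary~\ref{C:DrelStrOrd} plus self-duality and the two-element full components by a direct check of $\Bip(2)$. The only remark worth making is that the degenerate two-point case you agonize over at the end is vacuous: $a_0\tre a_1\tre a_0$ forces both loops $a_0\tre a_0$ and $a_1\tre a_1$ by transitivity, so a non-antisymmetric two-element component is automatically the full relation and (iv) can only fail on a component with at least three points, where Proposition~\ref{P:NonPseudo} applies directly.
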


\begin{proof}
(i)$\Rightarrow$(ii) is well-known, see for example Freese, Je\v{z}ek, and Nation \cite[Theorem~2.20]{FJN}.

(ii)$\Rightarrow$(iii) is trivial.

(iii)$\Rightarrow$(iv) follows immediately from Proposition~\ref{P:NonPseudo}.

(iv)$\Rightarrow$(i). Denote by $\setm{E_i}{i<n}$ the set of all connected components of~$\utre$ and set $\be_i=\be\cap(E_i\times E_i)$ for each $i<n$. By Proposition~\ref{P:Regsumei}, it suffices to consider the case where $\be=\be_i$ for some~$i$, that is, $\utre$ is connected. It suffices to prove that the join-dependency relation on~$E$ has no cycle (cf. Freese, Je\v{z}ek, and Nation \cite[Corollary~2.39]{FJN} together with the self-duality of $\Reg(\be)$). In the antisymmetric case, this follows from Corollary~\ref{C:DrelStrOrd}. If $E=\set{a,b}$ with $a\neq b$ while $(a,b)\in\be$ and $(b,a)\in\be$, then $\Reg(\be)$ is isomorphic to the lattice~$\Bip(2)$ of all bipartitions of a two-element set (cf. Section~\ref{S:RegCl}). This lattice is represented on the left hand side of Figure~\ref{Fig:Bip2}. On the right hand side of Figure~\ref{Fig:Bip2} we represent the~$\Dr$ relation on the \jirr\ elements of~$\Bip(2)$. The \jirr\ elements of~$\Bip(2)$ are denoted there by
 \begin{align*}
 \ba_0&=\set{(1,2)}\,,&
 \ba_1&=\set{(1,1),(1,2)}\,,&
 \ba_2&=\set{(2,2),(1,2)}\,,\\
 \bb_0&=\set{(2,1)}\,,&
 \bb_1&=\set{(1,1),(2,1)}\,,&
 \bb_2&=\set{(2,2),(2,1)}\,.
 \end{align*}
In particular, the~$\Dr$ relation on~$\Ji(\Bip(2))$ has no cycle. Hence, as~$\Bip(2)$ is self-dual, it is a bounded homomorphic image of a free lattice.
\end{proof}

\begin{corollary}\label{C:DrelStrOrd2}
Let~$\be$ be a finite transitive relation. If~$\Reg(\be)$ is semidistributive, then the join-dependency relation defines a strict ordering on the \jirr\ elements of $\Reg(\be)$.
\end{corollary}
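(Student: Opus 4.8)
The plan is to reduce Corollary~\ref{C:DrelStrOrd2} to the antisymmetric case, where Corollary~\ref{C:DrelStrOrd} already gives the desired conclusion, by combining Theorem~\ref{T:RegeSD} with the product decomposition of Proposition~\ref{P:Regsumei}. First I would invoke Theorem~\ref{T:RegeSD}: since $\Reg(\be)$ is semidistributive, condition~(iv) holds, so every connected component $E_i$ of the preordering~$\utre$ is either antisymmetric or a two-element ``full'' component $\set{a,b}$ with $(a,b),(b,a)\in\be$. Writing $\be_i=\be\cap(E_i\times E_i)$, Proposition~\ref{P:Regsumei}(ii) gives $\Reg(\be)\cong\prod_{i<n}\Reg(\be_i)$, and under this isomorphism the completely \jirr\ elements of $\Reg(\be)$ correspond to the completely \jirr\ elements of the individual factors $\Reg(\be_i)$ (each such element is of the form $\seq{0,\dots,0,\bp,0,\dots,0}$ with $\bp$ completely \jirr\ in some $\Reg(\be_i)$, since in a finite product a \jirr\ element is supported on a single coordinate).

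Next I would observe that the join-dependency relation respects this decomposition: for completely \jirr\ elements $\bp,\bq$ supported on coordinates $i$ and $j$ respectively, $\bp\Dr\bq$ in $\prod_i\Reg(\be_i)$ can only happen when $i=j$, and then it coincides with $\bp\Dr\bq$ computed inside the factor $\Reg(\be_i)$. This is a standard fact about join-dependency in finite direct products, and it reduces the problem to showing that $\Dr$ is a strict ordering on $\Ji(\Reg(\be_i))$ for each $i$. For the antisymmetric components this is exactly Corollary~\ref{C:DrelStrOrd}. For a two-element full component, $\Reg(\be_i)\cong\Bip(2)$, and the explicit computation of the $\Dr$ relation on $\Ji(\Bip(2))$ carried out in the proof of Theorem~\ref{T:RegeSD} (see Figure~\ref{Fig:Bip2}) shows it is acyclic; a direct inspection of that six-element set confirms it is in fact a strict order (irreflexive, and its transitive closure remains irreflexive since there are no cycles). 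Since a disjoint union of strict orders on disjoint sets is again a strict order, $\Dr$ on $\Ji(\Reg(\be))$ is a strict ordering.

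The only genuinely delicate point is the behaviour of $\Dr$ under direct products. Strictly speaking, $\bp\Dr\bq$ for $\bp$ and $\bq$ on different coordinates would require some $\bx$ with $\bp\leq\bq\vee\bx$ and $\bp\nleq\bq_*\vee\bx$; projecting to the coordinate $i$ of $\bp$, and using that $\bq$ projects to $0$ there while $\bq_*$ (the lower cover of $\bq$) also projects to $0$ there, one gets $\bp_i\leq\bx_i$ and $\bp_i\nleq\bx_i$, a contradiction. Thus cross-coordinate $\Dr$ never occurs, and within a coordinate $\Dr$ is computed in the factor because joins, lower covers, and the order are all computed coordinatewise. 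I expect this verification — together with the identification of $\Ji$ of a finite product — to be the main (though routine) obstacle; everything else is an immediate appeal to results already established. One can also bypass the product bookkeeping entirely by noting that semidistributivity of $\Reg(\be)$ forces, via Theorem~\ref{T:RegeSD}(i), that $\Reg(\be)$ is bounded, hence (Freese--Je\v zek--Nation \cite[Corollary~2.39]{FJN}) the $\Dr$ relation on $\Reg(\be)$ is cycle-free; it then remains only to rule out the possibility $\bp\Dr\bp$, i.e.\ to check that no \jirr\ element is join-dependent on itself. Since $\bp\Dr\bp$ is excluded by definition of $\Dr$ (which requires $p\neq q$), $\Dr$ is irreflexive, and cycle-freeness then makes its transitive closure a strict partial order; but as $\Dr$ need not itself be transitive in general, the cleanest statement is obtained by combining irreflexivity with acyclicity, which is precisely what boundedness provides. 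I would present the argument via the product decomposition, as it also makes transparent which factors contribute which \jirr\ elements.
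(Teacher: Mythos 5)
Your main argument is correct and is essentially the paper's own proof: use Theorem~\ref{T:RegeSD}(iv) and Proposition~\ref{P:Regsumei} to reduce to the case of an antisymmetric component (where Corollary~\ref{C:DrelStrOrd} applies) or a two-element full component, where $\Reg(\be_i)\cong\Bip(2)$ and the $\Dr$ relation is bipartite, hence vacuously transitive; your extra care about how $\Dr$ behaves under finite direct products fills in exactly the step the paper labels ``easy''. One caveat: the shortcut you sketch at the end (boundedness $\Rightarrow$ cycle-freeness of $\Dr$) would not by itself prove the statement, since acyclicity only makes the \emph{transitive closure} of $\Dr$ a strict order, whereas the corollary asserts that $\Dr$ itself is transitive --- a genuinely stronger property that fails for general bounded lattices (cf.\ Example~\ref{Ex:DNotTrans}) --- but you flag this and rightly fall back on the product decomposition.
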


\begin{proof}
By using Proposition~\ref{P:Regsumei}, together with the characterization~(iv) of semidistributivity of~$\Reg(\be)$ given in Theorem~\ref{T:RegeSD}, 
it is easy to reduce
the problem to the case where~$\be$ is either antisymmetric or a loop
$a\tre b\tre a$ with $a\neq b$. In the first case, the conclusion
follows from Corollary~\ref{C:DrelStrOrd}. In the second case, the
join-dependency relation is bipartite (see the right hand side of
Figure~\ref{Fig:Bip2}), thus transitive.
\end{proof}

The lattices~$\Bip(3)$ and~$\Bip(4)$ are represented on Figure~\ref{Fig:Bip34}; they have~$74$ and~$730$ elements, respectively.

\begin{figure}[htb]
\includegraphics{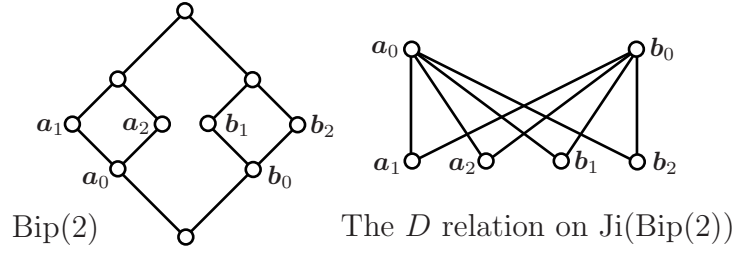}
\caption{The lattice $\Bip(2)$ of all bipartitions of $\set{1,2}$ and its~$\Dr$ relation}\label{Fig:Bip2}
\end{figure}

\begin{figure}[htb]
\centering
\includegraphics[scale=0.5]{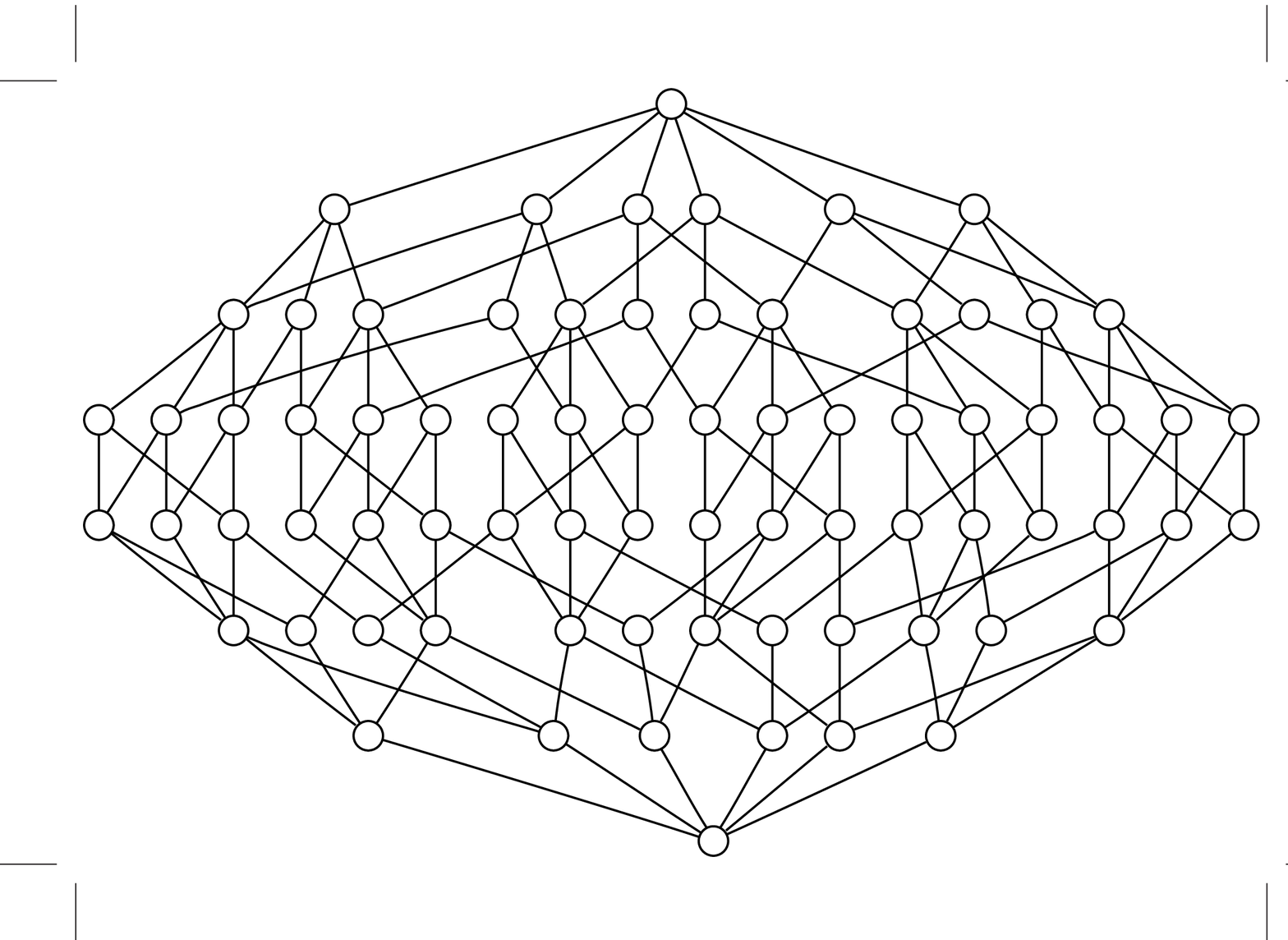}
\includegraphics[scale=0.08]{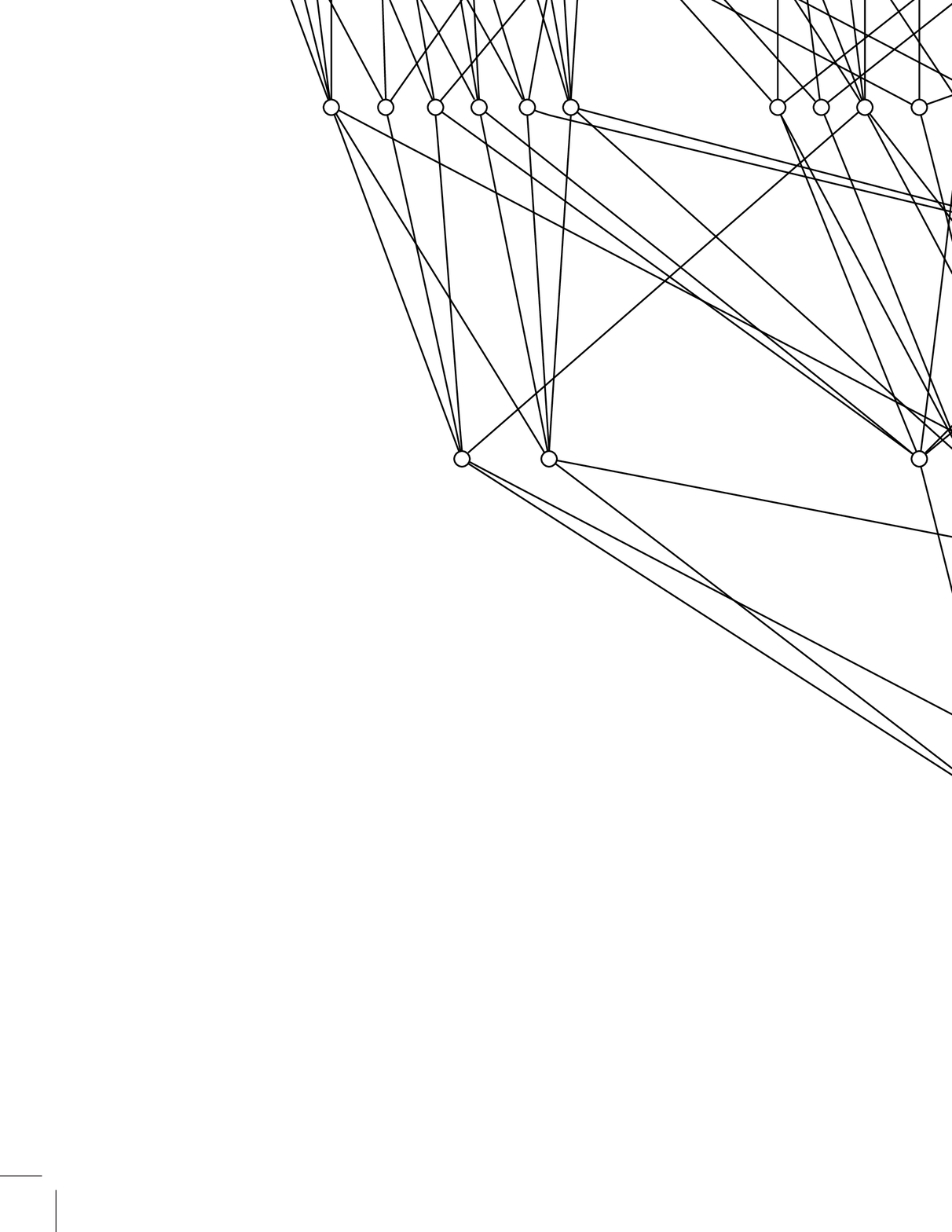}
\caption{The lattices $\Bip(3)$ and $\Bip(4)$}\label{Fig:Bip34}
\end{figure}

\begin{example}\label{Ex:Pomega+1}
The following example shows that none of the implications (iv)$\Rightarrow$(ii) and (iv)$\Rightarrow$(iii) of Theorem~\ref{T:RegeSD} can be extended to the infinite case. Define~$\be$ as the natural strict ordering on the ordinal $\go+1=\set{0,1,2,\dots}\cup\set{\go}$. As~$\be$ is obviously square-free, it follows from Theorem~\ref{T:SqFree} that $\Clop(\be)=\Reg(\be)$ is a lattice, namely the permutohedron~$\sP(\go+1)$ (cf. Example~\ref{Ex:PermPoset2}). It is straightforward to verify that the sets
 \begin{align*}
 \ba&=\setm{(2m,2n+1)}{m\leq n<\go}
 \cup\setm{(2m,\go)}{m<\go}\,,\\
 \bb&=\setm{(2m+1,2n+2)}{m\leq n<\go}
 \cup\setm{(2m+1,\go)}{m<\go}\,,\\
 \bc&=\setm{(m,\go)}{m<\go}\,.
 \end{align*}
are all clopen in~$\be$. Furthermore,
 \[
 \ba\cap\bc=\setm{(2m,\go)}{m<\go}
 \]
has empty interior, so $\ba\wedge\bc=\es$. Likewise, $\bb\wedge\bc=\es$. On the other hand, $\bc\subseteq\ba\cup\bb\subseteq\ba\vee\bb$ and $\bc\neq\es$. In particular,
 \[
 \ba\wedge\bc=\bb\wedge\bc=\es\text{ and }
 (\ba\vee\bb)\wedge\bc\neq\es\,.
 \]
Therefore, the lattice~$\sP(\go+1)$ is neither pseudocomplemented, nor semidistributive.

The result that every (finite) permutohedron~$\sP(n)$ is pseudocomplemented originates in Chameni-Nembua and Monjardet~\cite{ChaMon92}.
\end{example}

\section{Join-dependency and congruences of bipartition lattices}\label{S:JoinBip}

The full relation $\bi_n=[n]\times[n]$ is transitive, for any positive integer~$n$, and $\Reg(\bi_n)=\Clop(\bi_n)=\Bip(n)$ (cf. Section~\ref{S:RegCl}), the bipartition lattice of~$n$. Due to the existence of exactly one $\equiv_{\bi_n}$-class (namely the full set~$[n]$), the description of the \jirr\ elements of~$\Bip(n)$ obtained from Theorem~\ref{T:JirrRege} takes a particularly simple form.

\begin{lemma}\label{L:JirrBipn}
The \jirr\ elements of~$\Bip(n)$ are exactly the sets $\seq{U}=U^\cpl\times U$, for $U\in\Pow^*[n]$ \pup{those are the bipartite ones}, and $\seq{a,U}=\pji{a}{a}{U}$ for $a\in[n]$ and $U\in\Pow[n]$ \pup{those are the clepsydras}.
\end{lemma}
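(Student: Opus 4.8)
The plan is to read off the statement directly from Theorem~\ref{T:JirrRege}, specialized to the full relation $\be=\bi_n=[n]\times[n]$. Since $\Bip(n)=\Reg(\bi_n)$ is finite, its \jirr\ elements coincide with its completely \jirr\ elements, so Theorem~\ref{T:JirrRege} gives that these are exactly the sets $\pji{a}{b}{U}$ with $(a,b,U)\in\cF(\bi_n)$. The whole proof then amounts to unwinding what $\cF(\bi_n)$ and $\pji{a}{b}{U}$ mean for this particular $\be$.

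First I would record the basic observation that $\bi_n$ is the full reflexive relation on $[n]$, so the preordering $\utre$ is total with a single $\eqe$-class, namely $[n]$ itself, and hence $\cce{a,b}=[n]$ for all $a,b\in[n]$. Then I would split into the two cases in the definition of $\cF$. If $a=b$, then $(a,a,U)\in\cF(\bi_n)$ for \emph{every} $U\subseteq[n]$, and by definition $\pji{a}{a}{U}=\seq{a,U}$ is exactly the clepsydra of the statement; so the clepsydra part is immediate (one may note, using Lemma~\ref{L:TwoJirrEq}(ii), that $\seq{a,U}$ depends on $U$ only through $U\setminus\set a$, i.e.\ the parametrization by pairs $(a,U)\in[n]\times\Pow[n]$ is two-to-one, but since only a set equality is claimed this is harmless). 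If $a\neq b$, then membership in $\cF(\bi_n)$ forces $a\notin U$ and $b\in U$, so $U$ is a proper nonempty subset of $[n]$; moreover, using the reformulation $\pji{a}{b}{U}=\be\cap\pI{(\set a\cup U^\cpl)\times(\set b\cup U)}$ from Definition~\ref{D:abU} together with fullness of $\be$, one gets $\pji{a}{b}{U}=U^\cpl\times U=\seq U$ with $U^\cpl=[n]\setminus U$ and $U\in\Pow^*[n]$.

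It remains to check the two descriptions are surjective and non-overlapping: every $U\in\Pow^*[n]$ arises as some $\seq U$ by choosing any $a\in U^\cpl$ and $b\in U$, and by Lemma~\ref{L:TwoJirrEq}(i) the resulting set depends only on $U$ (the conditions $a\eqe c$, $b\eqe d$ being automatic since there is a unique $\eqe$-class); and a clepsydra $\seq{a,U'}$ always contains $(a,a)$, which no bipartite set $U^\cpl\times U=\seq U$ can contain, so the two families are disjoint (alternatively, $\pji{a}{b}{U}$ is bipartite precisely when $a\neq b$, by the remark following Definition~\ref{D:abU}).

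The argument is routine and presents no real obstacle; the only point needing mild care is the bookkeeping of the membership conditions defining $\cF(\bi_n)$ and the identification $\cce{a,b}=[n]$, together with the (inconsequential) redundancy in the clepsydra parametrization.
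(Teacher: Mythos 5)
Your proposal is correct and follows exactly the route the paper intends: the paper states this lemma without a separate proof, deriving it directly from Theorem~\ref{T:JirrRege} together with the observation that the full relation $\bi_n$ has a single $\equiv_{\bi_n}$-class, so that $\cce{a,b}=[n]$ throughout. Your unwinding of the two cases of Definition~\ref{D:abU}, the surjectivity check, and the remark on the two-to-one clepsydra parametrization via Lemma~\ref{L:TwoJirrEq} are all accurate and match the intended argument.
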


\begin{lemma}\label{L:DRelFromClep}
Let $(a,U)\in[n]\times\Pow[n]$ and let $V\in\Pow^*[n]$. Then $\seq{a,U}\Dr\seq{V}$ always holds \pup{within $\Bip(n)$}.
\end{lemma}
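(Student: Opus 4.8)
The plan is to apply Lemma~\ref{L:Arr2D}. Write $\bp=\seq{a,U}$, a clepsydra, and $\bq=\seq V$, a bipartite \jirr\ element; since $V\in\Pow^*[n]$ we have $n\geq2$, and $\bp$ contains the reflexive pair $(a,a)$ whereas $\bq$ contains no reflexive pair, so $\bp$ and $\bq$ are distinct \jirr\ elements of the finite lattice $\Bip(n)=\Reg(\bi_n)$. As $\orth{}$ is a dual automorphism of $\Bip(n)$, $\orth\br\in\Mi(\Bip(n))$ for every \jirr\ $\br$; hence, by Lemma~\ref{L:Arr2D} and the equivalence $\orth\br\searrow\bq\Leftrightarrow\br\nearrow\orth\bq$ noted at the start of Section~\ref{S:JoinDep}, it suffices to produce a \jirr\ element $\br$ of $\Bip(n)$ with $\bp\nearrow\orth\br$ and $\br\nearrow\orth\bq$. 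To test arrow relations I would use Lemma~\ref{L:ArRelJirr}: $\bx\nearrow\orth\by$ amounts to $\bx\cap\by\neq\es$ and $\bx\cap\by_*=\es$. Since $\bi_n$ is the full relation, Lemma~\ref{L:abUClopJirr} gives $\seq W_*=\es$ for every bipartite \jirr\ $\seq W$, and $\seq{c,W}_*=\seq{c,W}\setminus\set{(c,c)}$ for every clepsydra.

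Put $A=\set a\cup U^\cpl$ and $B=\set a\cup U$, so that $\bp=A\times B$ by Definition~\ref{D:abU}, with $A\cup B=[n]$ and $A\cap B=\set a$; recall also $\bq=V^\cpl\times V$ by Lemma~\ref{L:JirrBipn}. The first candidate witness is the ``reversed'' clepsydra $\br=\seq{a,U^\cpl}=B\times A$. Then $\bp\cap\br=(A\cap B)\times(B\cap A)=\set{(a,a)}$, which is disjoint from $\br_*=\br\setminus\set{(a,a)}$, so $\bp\nearrow\orth\br$. Moreover $\br\cap\bq=(B\cap V^\cpl)\times(A\cap V)$ is disjoint from $\bq_*=\es$, so $\br\nearrow\orth\bq$ holds as soon as $B\cap V^\cpl\neq\es$ and $A\cap V\neq\es$, i.e. $B\nsubseteq V$ and $A\nsubseteq V^\cpl$. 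This disposes of every $V$ except those with $A\subseteq V^\cpl$ or $B\subseteq V$.

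For the remaining $V$ I would take instead $\br=\bq=\seq V$. Then $\br\cap\bq=\bq\neq\es$ and $\br\cap\bq_*=\es$, so $\br\nearrow\orth\bq$ is automatic, and there only remains $\bp\nearrow\orth\bq$, i.e. $\bp\cap\bq=(A\cap V^\cpl)\times(B\cap V)\neq\es$. If $A\subseteq V^\cpl$, then $A\cap V^\cpl=A\neq\es$, and the inclusion $\set a\cup U^\cpl=A\subseteq V^\cpl$ forces $V\subseteq U$, whence $B\cap V\supseteq U\cap V=V\neq\es$. Symmetrically, if $B\subseteq V$, then $B\cap V=B\neq\es$, and $\set a\cup U=B\subseteq V$ forces $U\subseteq V$, whence $A\cap V^\cpl\supseteq U^\cpl\cap V^\cpl=V^\cpl\neq\es$. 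In both sub-cases $\bp\cap\bq\neq\es$, so $\bp\nearrow\orth\bq$, and Lemma~\ref{L:Arr2D} yields $\bp\Dr\bq$.

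The whole argument comes down to choosing $\br$: the main point is to see that the reversed clepsydra $B\times A$ witnesses $\bp\Dr\bq$ for all $V$ except those of the two degenerate shapes $A\subseteq V^\cpl$ and $B\subseteq V$, and that $\bq$ itself handles exactly those. Everything else is a handful of one-line set computations; the identities really worth double-checking are $\bp=A\times B$, $\seq{a,U^\cpl}=B\times A$, $A\cap B=\set a$, and the forms of the lower covers in $\Bip(n)$.
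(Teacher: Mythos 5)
Your proof is correct. It follows the same overall scheme as the paper's---produce an intermediate \jirr\ element $\br$ with $\bp\nearrow\orth{\br}$ and $\br\nearrow\orth{\bq}$, then invoke Lemma~\ref{L:Arr2D}---but the case split and the witnesses differ. The paper splits on whether $(U,V)$ is a partition of~$[n]$: in the partition case it uses the reversed clepsydra $\seq{a,V}$ (which coincides with your $\seq{a,U^\cpl}$), and otherwise it uses a \emph{bipartite} intermediate $\seq{W}$ with $W=U\cap V$ or $W=U\cup V$, verified through the specialized arrow criteria of Lemmas~\ref{L:Arcneqd} and~\ref{L:Araneqbcneqd}. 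You instead push the reversed clepsydra as far as it will go (it fails only when $\set{a}\cup U\subseteq V$ or $\set{a}\cup U^\cpl\subseteq V^\cpl$) and dispose of the leftover cases by the degenerate choice $u=\orth{\bq}$, that is, by checking $\bp\nearrow\orth{\bq}$ directly; this exploits the fact that $\orth{\bq}\searrow\bq$ is automatic for bipartite~$\bq$ because $\bq_*=\es$. Both routes are equally short; yours has the small advantage of working straight from the raw intersection criterion of Lemma~\ref{L:ArRelJirr} and of making explicit that a single nonempty intersection $\bp\cap\bq$ with a bipartite~$\bq$ already yields $\bp\Dr\bq$. All the set computations check out, including the identities $\bp=A\times B$, $\seq{a,U^\cpl}=B\times A$, $A\cap B=\set{a}$, the exhaustiveness of your two cases, and the forms of the lower covers in $\Bip(n)$.
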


\begin{proof}
Suppose first that $(U,V)$ forms a partition of~$[n]$. It follows from Lemma~\ref{L:Arc=d} that
 $\seq{a,U}\nearrow\orth{\seq{a,V}}$; moreover, considering that both
 $V$ and $V^{\cpl}$ are nonempty, the relation
 $\seq{a,V}\nearrow\orth{\seq{V}}$  follows from Lemma~\ref{L:Arcneqd}. Hence $\seq{a,U}\Dr\seq{V}$.

Suppose from now on that $(U,V)$ does not form a partition of~$[n]$. We shall find $W\in\Pow[n]$ such that
 \begin{align}
 (\set{a}\cup U^\cpl)\cap W^\cpl&\neq\es\,,
 \label{Eq:aUcmeetsWc}\\
 V^\cpl\cap W^\cpl&\neq\es\,,
 \label{Eq:VcmeetsWc}\\
 (\set{a}\cup U)\cap W&\neq\es\,,
 \label{Eq:UmeetsW}\\
 V\cap W&\neq\es\,.\label{Eq:VmeetsW} 
 \end{align}
By Lemmas~\ref{L:Arcneqd} and~\ref{L:Araneqbcneqd}, this will ensure that $\seq{a,U}\nearrow\orth{\seq{W}}$ and $\seq{W}\nearrow\orth{\seq{V}}$, hence that $\seq{a,U}\Dr\seq{V}$.

If $U\cap V\neq\es$, set $W=U\cap V$.
Then~\eqref{Eq:UmeetsW} and~\eqref{Eq:VmeetsW} are trivial, while
$V\cup W=V\neq[n]$ and $U\cup W=U\neq[n]$, so~\eqref{Eq:aUcmeetsWc}
and~\eqref{Eq:VcmeetsWc} are satisfied as well. If $U\cap V=\es$, then, as $(U,V)$ is not a partition of~$[n]$, we get $U^\cpl\cap V^\cpl\neq\es$; we set in this case $W=U\cup V$.
Again, it is easy to verify that
\eqref{Eq:aUcmeetsWc}--\eqref{Eq:VmeetsW} are satisfied.
\end{proof}

The description of the~$\Dr$ relation on~$\Bip(n)$ is completed by the following result.

We say that a partition $(U,V)$ of~$[n]$ is \emph{unbalanced} if either~$U$ or~$V$ is a singleton.

\begin{lemma}\label{L:DRelonBipart}
Let $U,V\in\Pow^*[n]$. Then $\seq{U}\Dr\seq{V}$ if{f} $(U,V)$ is not an unbalanced partition of~$[n]$.
\end{lemma}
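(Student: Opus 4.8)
The plan is to turn $\seq U\Dr\seq V$ into a purely combinatorial condition on the four sets $U,U^\cpl,V,V^\cpl$, via Lemma~\ref{L:Arr2D} and the reductions opening Section~\ref{S:JoinDep}. Since $\Dr$ is irreflexive we may restrict to $U\neq V$. By that material, together with the description of the \jirr\ elements of $\Bip(n)$ as the $\seq W$ and the clepsydras $\seq{a,W}$ (Lemma~\ref{L:JirrBipn}), we have $\seq U\Dr\seq V$ if{f} there is a \jirr\ element~$\br$ of $\Bip(n)$ with $\seq U\nearrow\orth{\br}$ and $\br\nearrow\orth{\seq V}$. First I would rule out $\br$ being a clepsydra: if $\br=\seq{c,W}$ then $\seq U\nearrow\orth{\br}$ reads $\pji{a}{b}{U}\nearrow\orth{\pji{c}{c}{W}}$ with $a\neq b$, which Lemma~\ref{L:Arc=d} forbids. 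Hence $\br=\seq W$ for some $W\in\Pow^*[n]$.

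Next I would specialize Lemma~\ref{L:Araneqbcneqd} to $\be=\bi_n$. As $\bi_n$ is the full relation, $\cce{x,y}=[n]$ for all $x,y$, so the clause $\cce{c,d}\subseteq\cce{a,b}$ and the upper bound $\subseteq\cce c\times\cce d$ are automatic, and the lemma collapses to: $\seq U\nearrow\orth{\seq W}$ if{f} $U\cap W\neq\es$ and $U^\cpl\cap W^\cpl\neq\es$, and symmetrically $\seq W\nearrow\orth{\seq V}$ if{f} $V\cap W\neq\es$ and $V^\cpl\cap W^\cpl\neq\es$. So $\seq U\Dr\seq V$ becomes equivalent to the existence of $W\in\Pow^*[n]$ with
\[
U\cap W\neq\es,\quad U^\cpl\cap W^\cpl\neq\es,\quad V\cap W\neq\es,\quad V^\cpl\cap W^\cpl\neq\es\,.
\]

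These four conditions are visibly symmetric in~$U$ and~$V$. For the direction ``$\seq U\Dr\seq V\Rightarrow(U,V)$ is not an unbalanced partition'' I would argue contrapositively: if $(U,V)$ is an unbalanced partition we may assume $V=\set{v}$ and $U=[n]\setminus\set{v}$, so $U^\cpl=\set{v}$, and then the second and third conditions force $v\notin W$ and $v\in W$ simultaneously, so no witness exists. For the converse I would produce~$W$ by cases exhausting the ways $(U,V)$ can fail to be an unbalanced partition: if $U\cap V\neq\es$, take $W=U\cap V$; if $U\cap V=\es$ but $U\cup V\neq[n]$, take $W=U\cup V$; and if $(U,V)$ is a partition with $U$ and $V$ each having at least two elements (so $n\geq4$), take $W=\set{u,v}$ with $u\in U$, $v\in V$. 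In each case $W\in\Pow^*[n]$ and the four nonemptiness conditions are immediate, whence $\seq U\Dr\seq V$ by Lemma~\ref{L:Arr2D}.

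The main points needing care are the elimination of clepsydras as the middle term~$\br$ (handled by Lemma~\ref{L:Arc=d}) and, in the balanced-partition case, checking that the two-element witness $\set{u,v}$ is a proper nonempty subset of~$[n]$; beyond that the argument is a routine unwinding of the arrow-relation lemmas of Section~\ref{S:JoinDep} specialized to the full relation.
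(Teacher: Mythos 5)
Your proposal is correct and follows essentially the same route as the paper: rule out a clepsydra as the intermediate join-irreducible via Lemma~\ref{L:Arc=d}, reduce $\seq{U}\Dr\seq{V}$ via Lemmas~\ref{L:Arr2D} and~\ref{L:Araneqbcneqd} to the existence of $W$ with $U\cap W$, $V\cap W$, $U^\cpl\cap W^\cpl$, $V^\cpl\cap W^\cpl$ all nonempty, and then run the identical case analysis with the same witnesses $W=U\cap V$, $W=U\cup V$, and $W=\set{u,v}$. Your explicit restriction to $U\neq V$ and the check that $\set{u,v}$ is a proper nonempty subset are welcome refinements but do not change the argument.
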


\begin{proof}
  The relation $\seq{U}\Dr\seq{V}$ holds if{f} there exists a \jirr\ 
  $\bp\in\Bip(n)$ such that $\seq{U}\nearrow\orth{\bp}$ and
  $\bp\nearrow\orth{\seq{V}}$. By Lemma \ref{L:Arc=d}, $\bp$
  cannot be a clepsydra. Hence, by Lemma~\ref{L:Araneqbcneqd},
  $\seq{U}\Dr\seq{V}$ if{f} there exists $W\in\Pow[n]$ such that
 \begin{equation}\label{Eq:IntersUVWne}
 U\cap W\neq\es\,,\quad V\cap W\neq\es\,,
 \quad U^\cpl\cap W^\cpl\neq\es\,,\quad
 V^\cpl\cap W^\cpl\neq\es\,.
 \end{equation}
 Suppose first that $(U,V)$ is an unbalanced partition of~$[n]$, so
 that either $U=\set{a}$ or $V=\set{a}$, for some $a\in[n]$. In the
 first case, $U\cap W\neq\es$ implies that $a\in W$, hence $V\cup
 W=[n]$, in contradiction with~\eqref{Eq:IntersUVWne}. Likewise, in
 the second case, $a\in W$ and $U\cup W=[n]$, in contradiction
 with~\eqref{Eq:IntersUVWne}. In any case, the relation
 $\seq{U}\Dr\seq{V}$ does not hold.
 Conversely, suppose from now on that if $(U,V)$ is a partition
 of~$[n]$, then it is not unbalanced. Suppose first that $(U,V)$ is a
 partition of~$[n]$ and pick $(u,v)\in U\times V$. Then the set
 $W=\set{u,v}$ meets both~$U$ and~$V$. Furthermore, $U\cup
 W=U\cup\set{v}$ is distinct from~$[n]$ as $V$ is not a singleton.
 Likewise $V\cup W=V\cup\set{u}\neq[n]$.
 Finally, suppose that $(U,V)$ is not a partition of~$[n]$. If $U\cap
 V\neq\es$, then $W=U\cap V$ solves our problem. If $U^\cpl\cap
 V^\cpl\neq\es$, then $W=U\cup V$ solves our problem. In any case,
 \eqref{Eq:IntersUVWne} holds for our choice of~$W$.
\end{proof}

\begin{lemma}\label{L:TCDBipBip}
Suppose that $n\geq3$ and let $U,V\in\Pow^*[n]$. Then either $\seq{U}\Dr\seq{V}$ or $\seq{U}\Dr^2\seq{V}$.
\end{lemma}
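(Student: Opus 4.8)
The plan is to reduce at once to the single case left open by Lemma~\ref{L:DRelonBipart}. That lemma says that $\seq{U}\Dr\seq{V}$ holds \emph{unless} $(U,V)$ is an unbalanced partition of~$[n]$, so if $(U,V)$ is not an unbalanced partition of~$[n]$ we are already done. Hence the only thing to prove is that $\seq{U}\Dr^2\seq{V}$ whenever $(U,V)$ is an unbalanced partition of~$[n]$; in particular, in that situation $V=[n]\setminus U$.

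In this remaining case I will exhibit a bipartite \jirr\ element $\seq{W}$ (with $W\in\Pow^*[n]$, cf.\ Lemma~\ref{L:JirrBipn}) such that $\seq{U}\Dr\seq{W}$ and $\seq{W}\Dr\seq{V}$. Looking among bipartite elements is in fact forced: if the middle term $r$ of a $\Dr$-chain of length two were a clepsydra, then $\seq{U}\Dr r$ would fail by Corollary~\ref{C:Arc=d}; but for the proof it is enough to produce one witness. By Lemma~\ref{L:DRelonBipart} applied twice, it suffices to choose $W\in\Pow^*[n]$ so that neither $(U,W)$ nor $(W,V)$ is an unbalanced partition of~$[n]$, and in fact it is enough that neither of them be a partition of~$[n]$ at all. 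Now $(U,W)$ is a partition of~$[n]$ exactly when $W=[n]\setminus U=V$, and $(W,V)$ is a partition of~$[n]$ exactly when $W=[n]\setminus V=U$; so any $W\in\Pow^*[n]\setminus\set{U,V}$ does the job, and such a $W$ exists because $\card\Pow^*[n]=2^{n}-2\geq 2^{3}-2=6>2$ when $n\geq 3$. For such a $W$, Lemma~\ref{L:DRelonBipart} yields $\seq{U}\Dr\seq{W}$ and $\seq{W}\Dr\seq{V}$, whence $\seq{U}\Dr^2\seq{V}$, as desired.

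There is no real obstacle here: every verification is routine combinatorics on subsets of~$[n]$, the bulk of the work having already been done in Lemma~\ref{L:DRelonBipart}. The only place the hypothesis $n\geq3$ enters is in guaranteeing the existence of a proper nonempty subset of~$[n]$ distinct from both $U$ and $V$ — which is precisely why the statement fails for $n=2$, where $\Pow^*[2]=\set{U,V}$ leaves no third member available to serve as the middle term.
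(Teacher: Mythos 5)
Your proof is correct and follows essentially the same route as the paper: both reduce via Lemma~\ref{L:DRelonBipart} to the case where $(U,V)$ is an unbalanced partition and then insert a bipartite middle term $\seq{W}$ with neither $(U,W)$ nor $(W,V)$ a partition of $[n]$. The only (immaterial) difference is the choice of witness: the paper takes $W=\set{a,b}$ with $\set{U,V}=\set{\set{a},[n]\setminus\set{a}}$, whereas you observe that any $W\in\Pow^*[n]\setminus\set{U,V}$ works.
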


\begin{proof}
By Lemma~\ref{L:DRelonBipart}, if the relation $\seq{U}\Dr\seq{V}$ fails, then there exists $a\in[n]$ such that $\set{U,V}=\set{\set{a},[n]\setminus\set{a}}$. Pick any $b\in[n]\setminus\set{a}$ and set $W=\set{a,b}$. As $n\geq3$, neither $(U,W)$ nor $(W,V)$ is a partition of~$[n]$. By Lemma~\ref{L:DRelonBipart}, it follows that $\seq{U}\Dr\seq{W}$ and $\seq{W}\Dr\seq{V}$.
\end{proof}

By using the end of Section~\ref{S:NotaTerm} (in particular Lemma~\ref{L:D2ConL}), the congruence lattice of~$\Bip(n)$ can be entirely described by the relation~$\Dr^*$ on $\Ji(\Bip(n))$. Hence it can be obtained from the following easy consequence of Corollary~\ref{C:Arc=d} together with Lemmas~\ref{L:DRelFromClep} and~\ref{L:TCDBipBip}.

\begin{corollary}\label{C:ConContBip}
Suppose that $n\geq3$ and let $\bp,\bq$ be \jirr\ elements of~$\Bip(n)$. Then $\con(\bp)\subseteq\con(\bq)$ if{f} either~$\bq$ is bipartite or~$\bp$ is a clepsydra and $\bp=\bq$.
\end{corollary}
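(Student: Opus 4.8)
The plan is to reduce the statement about congruences to the join-dependency preorder and then assemble the structural results already obtained for $\Bip(n)$. By the standard description of the congruence lattice of a finite lattice recalled at the end of Section~\ref{S:NotaTerm} (built on Lemma~\ref{L:D2ConL}), $\con(\bp)\subseteq\con(\bq)$ holds if{f} $\bp\Dr^*\bq$ in $\Bip(n)$. So it suffices to show that, for $\bp,\bq\in\Ji(\Bip(n))$, one has $\bp\Dr^*\bq$ exactly when either $\bq$ is bipartite, or $\bp$ is a clepsydra and $\bp=\bq$. Throughout I will use that, by Lemma~\ref{L:JirrBipn} together with the observation following Definition~\ref{D:abU}, the bipartite join-irreducibles and the clepsydras form two disjoint families whose union is $\Ji(\Bip(n))$.

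First I would handle the case where $\bq$ is bipartite, showing that $\bp\Dr^*\bq$ for \emph{every} $\bp$. If $\bp$ is bipartite, this is immediate from Lemma~\ref{L:TCDBipBip}, which yields $\bp\Dr\bq$ or $\bp\Dr^2\bq$. If $\bp$ is a clepsydra, then $\bp\Dr\bq$ holds by Lemma~\ref{L:DRelFromClep}. In either subcase $\bp\Dr^*\bq$, so the first alternative of the statement is always sufficient.

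Next I would treat the case where $\bq$ is a clepsydra. Here Corollary~\ref{C:Arc=d} tells us that no join-irreducible $\br$ satisfies $\br\Dr\bq$. Hence any $\Dr$-chain $\bp=r_0\Dr r_1\Dr\cdots\Dr r_k=\bq$ in $\Ji(\Bip(n))$ witnessing $\bp\Dr^*\bq$ must have $k=0$: otherwise $r_{k-1}$ would be a join-irreducible with $r_{k-1}\Dr\bq$, a contradiction. Therefore $\bp\Dr^*\bq$ forces $\bp=\bq$, in which case $\bp$, being equal to $\bq$, is a clepsydra; conversely $\bp=\bq$ gives $\bp\Dr^*\bq$ by reflexivity of $\Dr^*$. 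Thus, when $\bq$ is a clepsydra, $\bp\Dr^*\bq$ holds if{f} $\bp$ is a clepsydra and $\bp=\bq$. Combining the two cases yields the asserted equivalence.

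The argument has essentially no hard part: it is purely a matter of putting together Corollary~\ref{C:Arc=d}, Lemma~\ref{L:DRelFromClep}, and Lemma~\ref{L:TCDBipBip}. The only places requiring a little care are the bookkeeping with the reflexive–transitive closure $\Dr^*$ in the clepsydra case—making sure that the absence of any $\br$ with $\br\Dr\bq$ genuinely pins $\bp$ down to $\bq$—and the trivial but needed remark that a clepsydra is never bipartite, so that the two clauses of the statement do not overlap in a misleading way.
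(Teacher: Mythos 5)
Your proof is correct and follows exactly the route the paper intends: the paper presents this corollary as an ``easy consequence'' of Corollary~\ref{C:Arc=d}, Lemma~\ref{L:DRelFromClep}, and Lemma~\ref{L:TCDBipBip} via the translation $\con(\bp)\subseteq\con(\bq)\Leftrightarrow\bp\Dr^*\bq$, and you have supplied precisely that argument, including the needed observation that a clepsydra admits no $\Dr$-predecessor so that $\bp\Dr^*\bq$ with $\bq$ a clepsydra forces $\bp=\bq$.
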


In particular, the congruences~$\con(\bp)$, for~$\bp$ a clepsydra, are pairwise
incomparable, so they are the atoms of $\Con\Bip(n)$.  Each such congruence is thus determined by the corresponding clepsydra~$\bp$, and those clepsydras are in one-to-one correspondence with the
associated ordered pairs $(a,U\setminus\set{a})$. Hence there are
$n\cdot2^{n-1}$ clepsydras, and we get the following result.

\begin{corollary}\label{C:ConBipn}
The congruence lattice of the bipartition lattice~$\Bip(n)$ is obtained from a Boolean lattice with $n\cdot2^{n-1}$ atoms by adding a new top element, for every integer $n\geq3$.
\end{corollary}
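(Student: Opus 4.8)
The plan is to apply the standard Birkhoff duality between finite distributive lattices and finite posets to $\Con\Bip(n)$, using Corollary~\ref{C:ConContBip}. Recall from the end of Section~\ref{S:NotaTerm} that $\Con\Bip(n)$ is a finite distributive lattice with $\Ji(\Con\Bip(n))=\setm{\con(\bp)}{\bp\in\Ji(\Bip(n))}$ and that $\con(\bp)\subseteq\con(\bq)$ if{f} $\bp\Dr^*\bq$. Hence $\Con\Bip(n)$ is isomorphic to the lattice of down-sets of the poset $P=(\Ji(\Con\Bip(n)),{\subseteq})$, so it suffices to identify $P$ and then read off its lattice of down-sets.

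First I would describe $P$, using Lemma~\ref{L:JirrBipn}, Corollary~\ref{C:ConContBip} (legitimate since $n\geq3$), and Lemma~\ref{L:TwoJirrEq}(ii). By Lemma~\ref{L:JirrBipn}, every \jirr\ element of $\Bip(n)$ is either bipartite or a clepsydra. By Corollary~\ref{C:ConContBip}, if $\bq$ is bipartite then $\con(\bp)\subseteq\con(\bq)$ for \emph{every} \jirr\ $\bp$; thus all bipartite \jirr\ elements define one and the same congruence $\mu$, which is the greatest element of $P$. For a clepsydra $\bp$, Corollary~\ref{C:ConContBip} shows that $\con(\bp)\subseteq\con(\bq)$ forces $\bq$ to be bipartite or equal to $\bp$; so the congruences $\con(\bp)$ with $\bp$ a clepsydra are pairwise incomparable, each of them lies strictly below $\mu$ (since $\mu\subseteq\con(\bp)$ would make $\bp$ bipartite), and distinct clepsydras yield distinct such congruences. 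By Lemma~\ref{L:TwoJirrEq}(ii), the clepsydras $\pji{a}{a}{U}$ of $\Bip(n)$ are classified by the pairs $(a,U\setminus\set{a})$ with $a\in[n]$ and $U\setminus\set{a}\subseteq[n]\setminus\set{a}$, so there are exactly $n\cdot2^{n-1}$ of them. Therefore $P$ consists of an $n\cdot2^{n-1}$-element antichain $X$ together with one extra element $\mu$ placed above all of $X$.

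Next I would compute the lattice of down-sets of $P$. A down-set of $P$ not containing $\mu$ is an arbitrary subset of the antichain $X$, and these, ordered by inclusion, form a Boolean lattice with $n\cdot2^{n-1}$ atoms; the only down-set of $P$ containing $\mu$ is all of $P$, and it lies strictly above every subset of $X$. Hence the lattice of down-sets of $P$, that is, $\Con\Bip(n)$, is obtained from a Boolean lattice with $n\cdot2^{n-1}$ atoms by adjoining a new top element, which is the assertion.

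I do not expect a real obstacle: Corollary~\ref{C:ConContBip} already encodes the whole structure of the relation $\Dr^*$ on $\Ji(\Bip(n))$, and the rest is bookkeeping: checking that $\mu$ is a \jirr\ congruence lying at the top of $P$, counting the clepsydras via Lemma~\ref{L:TwoJirrEq}(ii), and translating $P$ through Birkhoff duality. The only point needing slight care is that the clepsydra-congruences be pairwise distinct and distinct from $\mu$, which is exactly what the ``only if'' part of Corollary~\ref{C:ConContBip} (together with its clause $\bp=\bq$) delivers, so no additional argument is required.
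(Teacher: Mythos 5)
Your proposal is correct and follows essentially the same route as the paper: it reads off the poset of join-irreducible congruences from Corollary~\ref{C:ConContBip} (an $n\cdot2^{n-1}$-element antichain of clepsydra congruences, counted via Lemma~\ref{L:TwoJirrEq}(ii), with the single bipartite congruence on top) and then applies Birkhoff duality. The paper phrases the last step slightly differently, identifying the clepsydra congruences directly as the atoms of $\Con\Bip(n)$, but the argument is the same.
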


Corollary~\ref{C:ConBipn} does not extend to the case where $n=2$: the congruence lattice of~$\Bip(2)$ is isomorphic to the lattice of all lower subsets of the poset represented on the right hand side of Figure~\ref{Fig:Bip2}.

\section{Minimal subdirect product decompositions of bipartition lattices}\label{S:SubdBip}

\begin{notation}\label{Not:Cs(n)}
For any positive integer~$n$, we define
\begin{itemize}
\item $\sG(n)$, the set of all bipartite \jirr\ elements of~$\Bip(n)$ (i.e., those of the form $U^\cpl\times U$, where $U\in\Pow^*[n]$);

\item $\sK(n)$, the \jz-subsemilattice of~$\Bip(n)$ generated by~$\sG(n)$;

\item $\theta_n$, the congruence of~$\Bip(n)$ 
generated by all~$\Psi(\bp)$, for $\bp\in\sG(n)$; 

\item $\sS(n,\bp)$, the \jz-subsemilattice of~$\Bip(n)$ generated by $\sG(n)\cup\set{\bp}$, for each $\bp\in\Ji(\Bip(n))$.

\end{itemize}
\end{notation}

By the results of Section~\ref{S:JoinBip}, the $\Dr^*$-minimal
\jirr\ of~$\Bip(n)$ are exactly the clepsydras $\seq{a,U}$, where
$a\in[n]$ and $U\subseteq[n]$ (note that the clopen set $\seq{a,U}$ is
uniquely determined by the ordered pair $(a,U\setminus\set{a})$). This
is proved in Section~\ref{S:JoinBip} for $n\geq3$, but it is also
trivially valid for $n\in\set{1,2}$ (cf. Figure~\ref{Fig:Bip2}). Hence
the minimal subdirect product decomposition of~$\Bip(n)$, given
by~\eqref{Eq:MinSubProd}, is the subdirect product
\begin{equation}\label{Eq:MinSDBipn}
  \Bip(n)\hookrightarrow\prod
  \vecm{\Bip(n)/{\Psi(\seq{a,U})}}
  {a\in[n]\,,\ U\subseteq[n]\setminus\set{a}}\,.
\end{equation}
By Lemma~\ref{L:D2ConL}, the factors of the
decomposition~\eqref{Eq:MinSDBipn} are exactly the lattices
$\Bip(n)/{\Psi(\seq{a,U})}\cong\sS(n,\seq{a,U})$. Likewise, we
can also observe that $\Bip(n)/{\theta_n}\cong\sK(n)$.  We shall now
identify, within~$\Bip(n)$, the elements of $\sS(n,\seq{a,U})$.

\begin{definition}\label{D:IsolPt}
An element $a\in[n]$ is an \emph{isolated point} of a bipartition~$\bx$ if $(a,i)\in\bx$ and $(i,a)\in\bx$ if{f} $i=a$, for each $i\in[n]$. We denote by $\isol(\bx)$ the set of all isolated points of~$\bx$.
\end{definition}

\begin{lemma}\label{L:IsolPt}
Let $\bx=\bigvee\vecm{\bx_i}{i\in I}$ in $\Bip(n)$. Then any isolated point~$a$ of~$\bx$ is an isolated point of some~$\bx_i$.
\end{lemma}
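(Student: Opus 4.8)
The plan is to combine two ingredients: the description of joins in $\Bip(n)=\Reg([n]\times[n])$ furnished by Lemma~\ref{L:BasicReg}, namely $\bx=\tcl\pI{\bigcup_{i\in I}\bx_i}$, together with the trivial inclusions $\bx_i\subseteq\bx$. First I would locate a single index $i_0\in I$ with $(a,a)\in\bx_{i_0}$, and then check that this very $\bx_{i_0}$ already has $a$ among its isolated points.

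For the first step, note that since $a$ is an isolated point of~$\bx$ we have $(a,a)\in\bx=\tcl\pI{\bigcup_{i\in I}\bx_i}$, so there are $m\geq1$ and elements $a=z_0,z_1,\dots,z_m=a$ with $(z_j,z_{j+1})\in\bigcup_{i\in I}\bx_i$ for each $j<m$. I claim that $z_1=a$. If $m=1$ this is immediate. If $m\geq2$, then $(a,z_1)\in\bx$, while transitivity of~$\bx$ applied along the subpath $z_1,z_2,\dots,z_m=a$ yields $(z_1,a)\in\bx$; were $z_1\neq a$, the element $i=z_1$ would then violate the defining property of an isolated point of~$\bx$ (Definition~\ref{D:IsolPt}). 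Hence $z_1=a$, so that $(a,a)=(z_0,z_1)\in\bigcup_{i\in I}\bx_i$, i.e.\ $(a,a)\in\bx_{i_0}$ for some $i_0\in I$.

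For the second step, fix such an $i_0$. Since $\bx_{i_0}\subseteq\bigcup_{i\in I}\bx_i\subseteq\tcl\pI{\bigcup_{i\in I}\bx_i}=\bx$, any $b\in[n]\setminus\set{a}$ with $(a,b)\in\bx_{i_0}$ and $(b,a)\in\bx_{i_0}$ would satisfy $(a,b),(b,a)\in\bx$, contradicting that $a$ is an isolated point of~$\bx$. Combined with $(a,a)\in\bx_{i_0}$, this is precisely the statement that $a$ is an isolated point of~$\bx_{i_0}$, which is what we want.

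The only slightly delicate point is the collapsing argument in the first step: one must rule out that the loop at~$a$ is manufactured from a genuinely longer closed walk through $\bigcup_{i\in I}\bx_i$, and this is exactly where the hypothesis that~$a$ is isolated in~$\bx$ does the work. Everything else is a direct unfolding of the definitions of join, transitive closure, and isolated point.
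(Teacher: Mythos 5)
Your proof is correct and follows essentially the same route as the paper's: reduce to showing $(a,a)\in\bx_{i_0}$ for some $i_0$ (the second step being immediate from $\bx_{i_0}\subseteq\bx$), then collapse a closed walk through $\bigcup_{i\in I}\bx_i$ at~$a$ by using transitivity of~$\bx$ and the isolatedness of~$a$ to force the first step of the walk to be the loop $(a,a)$ itself. The only difference is that you spell out the easy second step, which the paper dismisses with ``it suffices to prove that $(a,a)\in\bx_i$ for some~$i$''.
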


\begin{proof}
It suffices to prove that $(a,a)\in\bx_i$ for some~$i$. As~$\bx$ is
the transitive closure of the union of the~$\bx_i$, there are a
positive integer~$\ell$ and $a=a_0,a_1,\dots,a_{\ell-1}\in[n]$ such
that, setting $a_\ell=a$, the pair $(a_k,a_{k+1})$ belongs to
$\bigcup_{i\in I}\bx_i$ for each~$k<\ell$. As~$\bx$ is transitive, $(a_0,a_1),(a_1,a_0)\in\bx$, and $a=a_0$ is an isolated point of~$\bx$, we get $a_0=a_1$, so that $(a,a)$ belongs to $\bigcup_{i\in I}\bx_i$.
\end{proof}

\begin{proposition}\label{P:DescrSn}
The elements of~$\sK(n)$ are exactly the bipartitions without isolated points.
\end{proposition}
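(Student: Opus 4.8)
The plan is to establish the two inclusions of the asserted equality separately, the inclusion ``no isolated points $\Rightarrow$ in $\sK(n)$'' being the substantial one.

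One inclusion is immediate. Every element of $\sK(n)$ is either $\es$ or a finite join $\seq{U_1}\vee\dots\vee\seq{U_k}$ of elements of $\sG(n)$, where $\seq{U_i}=U_i^\cpl\times U_i$ with $U_i\in\Pow^*[n]$. Since no $a$ can satisfy $a\in U_i^\cpl\cap U_i$, the bipartition $\seq{U_i}$ contains no loop $(a,a)$, hence no isolated point; and $\es$ has no isolated point either. By Lemma~\ref{L:IsolPt}, applied to the join $\seq{U_1}\vee\dots\vee\seq{U_k}$, any isolated point of such a join is an isolated point of some $\seq{U_i}$, which is impossible. Hence every element of $\sK(n)$ is a bipartition without isolated points.

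For the converse I would fix a bipartition $\bx$ without isolated points and show $\bx\in\sK(n)$. Set $\bw=\bigvee\setm{\seq{U}}{U\in\Pow^*[n]\text{ and }\seq{U}\subseteq\bx}$, computed in $\Bip(n)$; this is a finite join, so $\bw\in\sK(n)$, and by Lemma~\ref{L:BasicReg}(ii) it equals the transitive closure of the union of the sets $\seq{U}$ appearing, so $\bw\subseteq\bx$ because $\bx$ is transitive and contains each of them. It remains to prove $\bx\subseteq\bw$, and the crucial step is the following claim: for every $(x,y)\in\bx$ with $x\neq y$ there exists $U\in\Pow^*[n]$ with $x\notin U$, $y\in U$, and $\seq{U}=U^\cpl\times U\subseteq\bx$; granting this, $(x,y)\in\seq{U}\subseteq\bw$. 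To prove the claim I would take
\[
U=\set{y}\cup\setm{q\in[n]}{(x,q)\in\bx\text{ and }(q,x)\notin\bx}\,.
\]
Then $y\in U$ and $x\notin U$ (for $q=x$ the displayed condition is contradictory), so $U\in\Pow^*[n]$. Given $s\in[n]\setminus U$ and $t\in U$ one must check $(s,t)\in\bx$. From $s\notin U$ one gets $(x,s)\notin\bx$ or $(s,x)\in\bx$; moreover $t=y$, or else $(x,t)\in\bx$ and $(t,x)\notin\bx$. If $(s,x)\in\bx$, then transitivity of $\bx$ applied to $(s,x)$ together with $(x,y)\in\bx$ (when $t=y$) or with $(x,t)\in\bx$ yields $(s,t)\in\bx$. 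If $(x,s)\notin\bx$, then openness of $\bx$ in $\bi_n$ — which, since every pair of $[n]$ lies in $\bi_n$, reads: $(p,r)\in\bx$ implies $(p,q)\in\bx$ or $(q,r)\in\bx$, for all $p,q,r$ — applied to $(x,y)\in\bx$ (when $t=y$) or to $(x,t)\in\bx$, with middle element $s$, forces $(s,t)\in\bx$. This proves the claim.

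Granting the claim, every pair of $\bx$ with distinct coordinates lies in $\bw$. For a loop $(a,a)\in\bx$, the hypothesis that $\bx$ has no isolated point provides $b\neq a$ with $(a,b),(b,a)\in\bx$; by the claim $(a,b)$ and $(b,a)$ both lie in $\bw$, and transitivity of $\bw$ gives $(a,a)\in\bw$. Hence $\bx\subseteq\bw$, so $\bx=\bw\in\sK(n)$. The main obstacle is the claim: the separating set $U$ must be chosen so that both the closedness (transitivity) and the openness of $\bx$ can be exploited, and it is precisely here that the hypothesis that $\bx$ is a genuine bipartition — a clopen, not merely transitive, subset of $\bi_n$ — is used, since for an arbitrary transitive relation both the claim and the proposition can fail.
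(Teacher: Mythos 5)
Your proof is correct and follows essentially the same route as the paper's: the easy inclusion via Lemma~\ref{L:IsolPt}, the off-diagonal pairs of~$\bx$ captured by bipartite \jirr\ elements $\seq{U}\subseteq\bx$, and the loops recovered by transitivity from the non-isolation hypothesis. The only difference is that where the paper simply invokes Lemma~\ref{L:OpenUjirr} (already proved for general transitive relations) to produce the set~$U$, you re-derive the needed special case inline with a slightly different, but equally valid, choice of~$U$.
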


\begin{proof}
No bipartite \jirr\ element of~$\Bip(n)$ has any isolated point, hence, by Lemma~\ref{L:IsolPt}, no element of~$\sK(n)$ has any isolated point. 

Conversely, let $\bx\in\Bip(n)$ with no isolated point and let $(i,j)\in\bx$. If $i\neq j$, then, by Lemma~\ref{L:OpenUjirr}, there exists $U\in\Pow^*[n]$ such that $(i,j)\in\seq{U}\subseteq\bx$.

If $i=j$, then, as~$\bx$ has no isolated point, there exists $k\neq i$ such that $(i,k)\in\bx$ and $(k,i)\in\bx$. By the paragraph above, there are $U,V\in\Pow^*[n]$ such that $(i,k)\in\seq{U}\subseteq\bx$ and $(k,i)\in\seq{V}\subseteq\bx$. Hence $(i,i)\in\seq{U}\vee\seq{V}\subseteq\bx$. Therefore, $\bx$ is a join of clopen sets of the form~$\seq{U}$.
\end{proof}

\begin{proposition}\label{P:DescrSnaU}
Let $a\in[n]$ and let $U\subseteq[n]$. The elements of $\sS(n,\seq{a,U})$ are exactly the bipartitions~$\bx$ such that
\begin{enumerate}
\item $\isol(\bx)\subseteq\set{a}$,

\item if $\isol(\bx)=\set{a}$, then $U^\cpl\times\set{a}$ and $\set{a}\times U$ are both contained in~$\bx$.
\end{enumerate}
\end{proposition}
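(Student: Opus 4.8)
The plan is to use the standard description of a $(\vee,0)$-subsemilattice generated by a set: $\sS(n,\seq{a,U})$ consists of $\es$ together with all finite joins, computed in $\Bip(n)$, of elements of $\sG(n)\cup\set{\seq{a,U}}$. I would first record two elementary facts about the clepsydra $\bp=\seq{a,U}=\pI{\set{a}\cup U^\cpl}\times\pI{\set{a}\cup U}$ (where, since $\be=[n]\times[n]$, one has $\cce{a}=[n]$, so $U^\cpl=[n]\setminus U$): first, the unique isolated point of~$\bp$ is~$a$, because $(a,i)\in\bp$ and $(i,a)\in\bp$ force $i\in\pI{\set{a}\cup U}\cap\pI{\set{a}\cup U^\cpl}=\set{a}$; second, both $U^\cpl\times\set{a}$ and $\set{a}\times U$ are contained in~$\bp$. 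In contrast, a bipartite \jirr\ element $\seq{W}=W^\cpl\times W$ has no isolated point, since $(c,c)\in\seq{W}$ would force $c\in W\cap W^\cpl=\es$. These remarks, together with Proposition~\ref{P:DescrSn} (describing $\sK(n)$) and Lemma~\ref{L:IsolPt} (isolated points pass to joinands), carry almost all of the argument.

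For the inclusion ``$\sS(n,\seq{a,U})\subseteq\set{\bx:\text{(i), (ii)}}$'': given $\bx\in\sS(n,\seq{a,U})$ with $\bx\neq\es$, write $\bx$ as a join of generators. By Lemma~\ref{L:IsolPt}, each point of $\isol(\bx)$ is isolated in some joinand, hence equals~$a$ by the remarks above; this gives~(i). If moreover $\isol(\bx)=\set{a}$, then $\bx\notin\sK(n)$ by Proposition~\ref{P:DescrSn}, so the chosen expression of~$\bx$ must involve the generator~$\seq{a,U}$ itself (otherwise $\bx$ would be a join of elements of $\sG(n)$); thus $\seq{a,U}\subseteq\bx$, and the second fact above yields~(ii).

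For the reverse inclusion, let $\bx$ satisfy (i) and (ii). If $\isol(\bx)=\es$ then $\bx\in\sK(n)\subseteq\sS(n,\seq{a,U})$ by Proposition~\ref{P:DescrSn}. Otherwise $\isol(\bx)=\set{a}$, so $(a,a)\in\bx$ and, by~(ii) together with the transitivity of~$\bx$ (which upgrades $U^\cpl\times\set{a}$ and $\set{a}\times U$ to $U^\cpl\times U\subseteq\bx$), we get $\bp=\seq{a,U}\subseteq\bx$. I would then establish the equality
\[
\bx=\seq{a,U}\vee\bigvee\Setm{\seq{W}}{W\in\Pow^*[n]\text{ and }\seq{W}\subseteq\bx}
\]
by the argument used for Proposition~\ref{P:DescrSn}: the right-hand side is contained in the closed set~$\bx$ because each joinand is; conversely, a pair $(i,j)\in\bx$ with $i\neq j$ lies in a bipartite $\seq{W}\subseteq\bx$ by Lemma~\ref{L:OpenUjirr} (which applies since every member of $\Bip(n)$ is clopen, hence open), the pair $(a,a)$ lies in $\seq{a,U}$, and a pair $(i,i)\in\bx$ with $i\neq a$ lies in $\seq{W_1}\vee\seq{W_2}$ for suitable bipartite $\seq{W_1},\seq{W_2}\subseteq\bx$, since $i\notin\isol(\bx)$ provides some $k\neq i$ with $(i,k),(k,i)\in\bx$. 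As $[n]$ is finite, this exhibits~$\bx$ as a finite join of generators, whence $\bx\in\sS(n,\seq{a,U})$.

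The genuinely routine parts are the bookkeeping with $U^\cpl$ inside $\Bip(n)$ and the verification that $\seq{a,U}$, as a set, really does contain $U^\cpl\times\set{a}$ and $\set{a}\times U$. The one step deserving attention is the last one: it is not enough to know $\seq{a,U}\subseteq\bx$, one must also account for the loop pairs $(i,i)$ with $i\neq a$, and what makes this work is precisely that condition~(i), via $\isol(\bx)=\set{a}$, forbids such~$i$ from being isolated, so that the Proposition~\ref{P:DescrSn}-style argument applies. I do not anticipate any deeper obstacle.
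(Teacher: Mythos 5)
Your proposal is correct and follows essentially the same route as the paper: the forward inclusion rests on Lemma~\ref{L:IsolPt} (isolated points descend to joinands, forcing the clepsydra $\seq{a,U}$ to appear whenever $\isol(\bx)=\set{a}$), and the reverse inclusion reduces to Proposition~\ref{P:DescrSn}'s argument after noting that~(ii) plus transitivity yields $\seq{a,U}\subseteq\bx$. The extra care you take with the loop pairs $(i,i)$, $i\neq a$, is exactly the point the paper delegates to ``the argument of the proof of Proposition~\ref{P:DescrSn}''.
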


\begin{proof}
 Every \jirr\ element of~$\Bip(n)$ which is either bipartite or equal to $\seq{a,U}$ satisfies both~(i) and~(ii) above, hence, by Lemma~\ref{L:IsolPt}, so do all elements of $\sS(n,\seq{a,U})$.

Conversely, let $\bx\in\Bip(n)$ satisfy both~(i) and~(ii) above. If $\isol(\bx)=\es$, then, by Proposition~\ref{P:DescrSn}, $\bx\in\sK(n)$, hence, \emph{a fortiori}, $\bx\in\sS(n,\seq{a,U})$.

Now suppose that $\isol(\bx)=\set{a}$.
It follows from~(ii) together with the transitivity of~$\bx$ that $\seq{a,U}\subseteq\bx$. For each $(i,j)\in\bx\setminus\set{(a,a)}$, it follows from the argument of the proof of Proposition~\ref{P:DescrSn} that there are $U,V\in\Pow^*[n]$ such that $(i,j)\in\seq{U}\vee\seq{V}\subseteq\bx$. Hence $\bx\in\sS(n,\seq{a,U})$.
\end{proof}

Proposition~\ref{P:DescrSnaU} makes it possible to identify the
factors of the minimal subdirect product
decomposition~\eqref{Eq:MinSDBipn} of the bipartition
lattice~$\Bip(n)$. Observe that a similar result is established, in
Santocanale and Wehrung~\cite{SaWe11}, for the
\emph{permutohedron}~$\sP(n)$. In that paper, it is proved, in
particular, that the corresponding subdirect factors are exactly the
\emph{Cambrian lattices of type A} (cf. Reading~\cite{Read06}),
denoted there by~$\sA_U(n)$, for $U\subseteq[n]$.

Hence the lattices $\sS(n,\seq{a,U})$ can be viewed as the analogues,
for bipartition lattices, of the Cambrian lattices of type~A (i.e.,
the~$\sA_U(n)$). For either $U=\nobreak\es$ or $U=[n]$ (the
corresponding lattices are isomorphic, \emph{via}
$\bx\mapsto\bx^\op$), we get the bipartition analogue of the
\emph{Tamari lattice} $\sA(n)=\sA_{[n]}(n)$ (cf. Santocanale and Wehrung~\cite{SaWe11}), namely
$\sS(n,\seq{a,\es})$ (whose isomorphism class does not depend on~$a$).

\begin{proposition}\label{P:SnkSelfDual}
Every lattice $\sS(n,\seq{a,U})$ is both isomorphic and dually isomorphic to $\sS(n,\seq{a,U^{\cpl}})$. In particular, each $\sS(n,\seq{a,U})$ is self-dual.
\end{proposition}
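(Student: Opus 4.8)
The plan is to derive the isomorphism $\sS(n,\seq{a,U})\cong\sS(n,\seq{a,U^{\cpl}})$ from the transposition automorphism of $\Bip(n)$, and the dual isomorphism from the orthocomplementation of $\Bip(n)$ together with the explicit description of $\Con\Bip(n)$ given by Corollary~\ref{C:ConBipn}. For the isomorphism, $\bx\mapsto\bx^{\op}$ is an automorphism of $\Bip(n)$, and a direct check from Definition~\ref{D:abU} shows that it sends each bipartite \jirr\ element $\seq{V}=V^{\cpl}\times V$ to $\seq{V^{\cpl}}$ and sends the clepsydra $\seq{a,U}=\pji{a}{a}{U}$ to $\seq{a,U^{\cpl}}$; hence it carries the generating set $\sG(n)\cup\set{\seq{a,U}}$ onto $\sG(n)\cup\set{\seq{a,U^{\cpl}}}$, and therefore restricts to an isomorphism from $\sS(n,\seq{a,U})$ onto $\sS(n,\seq{a,U^{\cpl}})$. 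More generally, for each permutation $\sigma$ of $[n]$ the relabelling $r_{\sigma}\colon\bx\mapsto\setm{(\sigma i,\sigma j)}{(i,j)\in\bx}$ is an automorphism of $\Bip(n)$ taking $\seq{a,U}$ to $\seq{\sigma a,\sigma U}$; so, up to isomorphism, $\sS(n,\seq{a,U})$ depends only on $k=\card(U\setminus\set{a})$, and $k$ may be replaced by $n-1-k$ (the corresponding integer for $\seq{a,U^{\cpl}}$).

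For the dual isomorphism I would first assume $n\geq3$. By Lemma~\ref{L:D2ConL} (as noted after~\eqref{Eq:MinSDBipn}), $\sS(n,\seq{a,U})\cong\Bip(n)/{\Psi(\seq{a,U})}$. The orthocomplementation $\orth{}$ of $\Bip(n)$ (Lemma~\ref{L:BasicOrth}) is a dual automorphism, hence induces a lattice automorphism $\phi$ of $\Con\Bip(n)$. By Corollary~\ref{C:ConBipn} and the discussion preceding it, the congruences $\con(\bp)$ with $\bp$ a clepsydra are precisely the atoms of $\Con\Bip(n)$, and $\bp\mapsto\con(\bp)$ is injective on clepsydras; since $\phi$ permutes atoms, there is a permutation $\rho$ of the set of clepsydras with $\phi(\con(\seq{a,U}))=\con(\seq{\rho(a,U)})$. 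Because $\Psi(\seq{a,U})$ is the largest congruence not above $\con(\seq{a,U})$ and $\phi$ is an order-automorphism, $\phi(\Psi(\seq{a,U}))$ is the largest congruence not above $\con(\seq{\rho(a,U)})$, i.e. $\phi(\Psi(\seq{a,U}))=\Psi(\seq{\rho(a,U)})$. Consequently $\orth{}$ induces a dual isomorphism $\Bip(n)/{\Psi(\seq{a,U})}\to\Bip(n)/{\Psi(\seq{\rho(a,U)})}$, so that $\sS(n,\seq{a,U})$ is dually isomorphic to $\sS(n,\seq{\rho(a,U)})$.

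It remains to control $\rho$. Since $\orth{\bx}=([n]\times[n])\setminus\bx$ for $\bx\in\Bip(n)$, and $r_{\sigma}$ commutes with complementation in $[n]\times[n]$, we have $\orth{}\circ r_{\sigma}=r_{\sigma}\circ\orth{}$; hence so do the induced automorphisms of $\Con\Bip(n)$, and since $r_{\sigma}$ acts on the atoms by $\con(\bp)\mapsto\con(r_{\sigma}\bp)$, it follows that $\rho\circ r_{\sigma}=r_{\sigma}\circ\rho$ on clepsydras. Thus $\rho$ is $S_{n}$-equivariant, so it maps the orbit of clepsydras of shape $k$ bijectively onto an orbit of the same cardinality $n\binom{n-1}{k}$, i.e. onto the orbit of shape $k$ or of shape $n-1-k$. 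Therefore $\seq{\rho(a,U)}$ has shape $k$ or $n-1-k$, and by the first paragraph $\sS(n,\seq{\rho(a,U)})\cong\sS(n,\seq{a,U})\cong\sS(n,\seq{a,U^{\cpl}})$; combined with the previous paragraph, $\sS(n,\seq{a,U})$ is dually isomorphic to $\sS(n,\seq{a,U^{\cpl}})$, and, together with the isomorphism already proved, self-dual. The cases $n\leq2$ are immediate: $\Bip(1)$ is a two-element chain, and for $n=2$ every lattice $\sS(2,\seq{a,U})$ is a five-element lattice isomorphic to $\sN_{5}$ (cf. Figure~\ref{Fig:Bip2}), hence self-dual, and all of them are mutually isomorphic.

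The main obstacle is exactly the identification and control of $\rho$ in the last two paragraphs: one genuinely needs the precise shape of $\Con\Bip(n)$ from Corollary~\ref{C:ConBipn} (in particular that the clepsydra congruences are precisely the atoms), together with a careful verification that $\orth{}$ induces a lattice automorphism of $\Con\Bip(n)$ which commutes, on congruences, with all the relabellings $r_{\sigma}$. The first paragraph is routine.
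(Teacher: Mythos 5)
Your proof is correct, and while the first half (the isomorphism via $\bx\mapsto\bx^{\op}$) coincides with the paper's argument, your treatment of the dual isomorphism takes a genuinely different route. The paper also starts from the orthocomplementation, but then works inside the lattice itself: it applies Lemma~\ref{L:D2ConL} to the dual lattice to realize $\Bip(n)^{\op}/{\Psi(\bp)}$ as a $(\wedge,1)$-subsemilattice $\sM(\bp)$, transports $\sM(\bp)$ by $\bx\mapsto\orth{\bx}$ to a \jz-subsemilattice generated by a set $\sG'(\bp)$ of \jirr\ elements, and then computes $\sG'(\bp)=\sG(n)\cup\set{\seq{a,U^{\cpl}}}$ explicitly from the arrow relations (Lemmas~\ref{L:Arc=d} and~\ref{L:Araneqbcneqd}). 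You instead pass to the congruence lattice: the dual automorphism $\orth{}$ induces an automorphism of $\Con\Bip(n)$, which must permute the atoms, i.e.\ the clepsydra congruences (Corollary~\ref{C:ConBipn}), and since $\Psi(\bp)$ is the largest congruence not above $\con(\bp)$ you get a dual isomorphism onto some $\sS(n,\seq{\rho(a,U)})$; the $S_n$-equivariance of $\rho$ plus the orbit count $n\binom{n-1}{k}$ then pins the target down to shape $k$ or $n-1-k$, which suffices after the first paragraph. The trade-off is clear: the paper's computation identifies the dual partner \emph{exactly} as $\seq{a,U^{\cpl}}$ and works uniformly in $n$, whereas your soft argument avoids all arrow-relation computations but only locates the target up to isomorphism, leans on the full description of $\Con\Bip(n)$ (hence requires $n\geq3$, with the small cases done by hand as you do), and needs the unimodality of binomial coefficients. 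All the individual steps you flag as delicate --- that $\orth{}$ induces a lattice (not dual) automorphism of $\Con\Bip(n)$, that it commutes with the relabellings $r_\sigma$, and that an equivariant bijection sends orbits onto orbits of equal size --- do check out. There is no circularity, since Corollary~\ref{C:ConBipn} is established before this proposition and independently of it.
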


\begin{proof}
Writing $\bp=\seq{a,U}$, the clopen set $\tilde{\bp}=\seq{a,U^\cpl}$
depends only on~$\bp$. An isomorphism from
$\sS(n,\bp)$ onto $\sS(n,\tilde{\bp})$ is given by the mapping sending a
relation~$\bx$ to its opposite~$\bx^\op$. 
We argue next that $\sS(n,\bp)$ is dually isomorphic to $\sS(n,\tilde{\bp})$. To this goal, denote by~$\sM(\bp)$ the $(\wedge,1)$-subsemilattice of~$\Bip(n)$ generated by the set
 \[
 \setm{\bu\in\Mi\Bip(n)}
 {(\bu,\bu^*)\notin\Psi(\bp)}\,.
 \]
It follows from Lemma~\ref{L:D2ConL}, applied to the dual lattice of~$\Bip(n)$, that $\Bip(n)^\op/{\Psi(\bp)}$ is isomorphic to~$\sM(\bp)$ endowed with the dual ordering of~$L$; hence,
 \[
 \sM(\bp)\cong\sS(n,\bp)
 \cong\Bip(n)/{\Psi(\bp)}\,.
 \]
On the other hand, $\sM(\bp)$ is dually isomorphic, \emph{via} the operation $\bx\mapsto\orth{\bx}$ of complementation on~$\Bip(n)$, to the \jz-subsemilattice~$\sS'(\bp)$ of~$\Bip(n)$ generated by the subset
 \[
 \sG'(\bp)=\setm{\br\in\Ji(\Bip(n))}
 {(\orth{\br},\orth{(\br_*)})\notin\Psi(\bp)}\,.
 \]
Now for each $\br\in\Ji(\Bip(n))$,
 \begin{align*}
 (\orth{\br},\orth{(\br_*)})\notin\Psi(\bp)&\Leftrightarrow
 (\sG(n)\cup\set{\bp})\dnw\orth{\br}\neq
 (\sG(n)\cup\set{\bp})\dnw\orth{(\br_*)}
 &&(\text{by Lemma~\ref{L:D2ConL}})\\
 &\Leftrightarrow(\exists\bq\in\sG(n)\cup\set{\bp})
 (\bq\leq\orth{(\br_*)}\text{ and }\bq\nleq\orth{\br})\\
 &\Leftrightarrow(\exists\bq\in\sG(n)\cup\set{\bp})
 (\bq\nearrow\orth{\br})\,. 
 \end{align*}
If $\br$ is bipartite, then, by Lemma~\ref{L:Araneqbcneqd}, there is always $\bq\in\sG(n)$ such that $\bq\nearrow\orth{\br}$ (for example $\bq=\br$). Now suppose that $\br=\seq{b,W}$ is a clepsydra. By Lemma~\ref{L:Arc=d}, $\bq\nearrow\orth{\br}$ can occur only in case $\bq=\seq{b,W^\cpl}$; furthermore, this element belongs to $\sG(n)\cup\set{\bp}$ if{f} $a=b$ and $U\setminus\set{a}=W^\cpl\setminus\set{a}$ (cf. Lemma~\ref{L:TwoJirrEq}). Therefore,
 \[
 \sG'(\bp)=\sG(n)\cup\set{\tilde{\bp}}\,,
 \]
and therefore $\sS'(\bp)=\sS(n,\tilde{\bp})$ is dually isomorphic to~$\sS(n,\bp)$.
\end{proof}

While the lattices~$\sA(n)$ and $\sA_U(n)$ are lattice-theoretical
retracts of~$\sP(n)$ (this originates in Bj\"orner and Wachs
\cite{BjWa97} and is stated formally in Santocanale and
Wehrung~\cite{SaWe11}), the situation for the
``bipartition-Tamari lattice'' $\sS(n,\seq{a,\es})$ is, as the
following examples show, not so nice.

\begin{example}\label{Ex:meetnotinters}
It is proved in Santocanale and Wehrung~\cite{SaWe11} that the meet in any Cambrian lattice of type~$A$ (i.e., any $\sA_U(n)$) is the same as set-theoretical intersection. This result does not extend to $\sS(n,\seq{a,\es})$. Indeed, both bipartitions
 \begin{align*}
 \ba&=\set{(1,1),(1,3),(2,1),(2,3),(3,1),(3,3)}\,,\\
 \bb&=\set{(1,1),(1,2),(1,3),(3,1),(3,2),(3,3)}
 \end{align*}
belong to $\sK(3)$ (use Proposition~\ref{P:DescrSn}), thus to $\sS(3,\seq{2,\es})$, but their intersection,
 \[
 \ba\cap\bb=\set{(1,1),(1,3),(3,1),(3,3)}
 \]
has empty interior, so the clopen set $\ba\wedge\bb=\es$ (in any of the lattices~$\sK(3)$, $\sS(3,\seq{2,\es})$, $\Bip(3)$) is distinct from $\ba\cap\bb$.
\end{example}

\begin{example}\label{Ex:NotRetr}
The result of~\cite{SaWe11}, stating that every Cambrian lattice~$\sA_U(n)$ is a lattice-theoretical retract of the corresponding permutohedron~$\sP(n)$, does not extend to bipartition lattices. For example, both bipartitions
 \begin{align*}
 \ba&=\set{(1,1),(1,3),(2,1),(2,3),(3,1),(3,3)}\,,\\
 \bc&=\set{(1,1),(1,2),(2,1),(2,2),(3,1),(3,2)}
 \end{align*}
belong to $\sK(3)$ (use Proposition~\ref{P:DescrSn}), thus to $\sS(3,\seq{2,\es})$, and their meet in~$\Bip(3)$ is also their intersection,
 \[
 \ba\wedge\bc=\ba\cap\bc=
 \set{(1,1),(2,1),(3,1)}\,,
 \]
 which does not belong to $\sS(3,\seq{2,\es})$. Hence, $\sS(3,\seq{2,\es})$ is not a sublattice of~$\Bip(3)$.
\end{example}

It is noteworthy to observe that, for a fixed positive integer~$n$,
the cardinality of the Cambrian lattice $\sA_{U}(n)$ is equal to the
Catalan number $\frac{1}{n+1}\binom{2n}{n}$, so it is independent
of~$U$ (combine Reading \cite[Theorem~9.1]{Read07a} with Reading
\cite[Theorem~1.1]{Read07b}: the former gives the enumeration of all
sortable elements, while the latter says that the sortable elements
are exactly the minimal elements of the Cambrian congruence
classes. This argument is established there for all finite Coxeter
groups). The following example 
emphasizes that the situation is quite different for the subdirectly
irreducible factors $\sS(n,\seq{a,U})$ of $\Bip(n)$, whose size might
depend on~$\card U$.
  
\begin{example}\label{Ex:OrdersCleps}
  Set $\sS(n,k)=\sS(n,\seq{1,\set{2,3,\dots,k+1}})$, for all
  integers~$n$ and~$k$ with $0\leq k<n$. Observe that
  $\sS(n,\seq{a,U})\cong\sS(n,\card U)$, for each $a\in[n]$ and each
  $U\subseteq[n]\setminus\set{a}$. Furthermore, it follows from
  Proposition~\ref{P:SnkSelfDual} that
  $\sS(n,\seq{a,U})\cong\sS(n,\seq{a,[n]\setminus(U\cup\set{a})})$,
  hence $\sS(n,k)\cong\sS(n,n-1-k)$, and hence the factors of the
  minimal subdirect product decomposition~\eqref{Eq:MinSDBipn}
  of~$\Bip(n)$ are exactly the lattices~$\sS(n,k)$ where $n>0$ and $0\leq 2k<n$.

For small values of~$n$, the orders of those lattices are the following:
 \begin{gather*}
 \card\sS(3,0)=24\,;
 \quad\card\sS(3,1)=21\,.\\
 \card\sS(4,0)=158\,;
 \quad\card\sS(4,1)=142\,.\\
 \card\sS(5,0)=1{,}320\,;\quad
 \card\sS(5,1)=1{,}202\,;\quad
 \card\sS(5,2)=1{,}198\,.\\
 \card\sS(6,0)=13{,}348\,;\quad
 \card\sS(6,1)=12{,}304\,;\quad
 \card\sS(6,2)=12{,}246\,.
 \end{gather*}
Setting~$M(n)=\card\Bip(n)$, it is established in Wagner~\cite{Wagn82} that the~$M(n)$ are characterized by the induction formula
 \[
 M(0)=1\,;\quad
 M(n)=2\cdot\sum_{k=1}^n
 \binom{n}{k}M(n-k)\quad
 \text{if }n>0\,.
 \]
The first entries of the sequence of numbers~$M(n)$ are
 \[
 M(3)=74\,;\quad
 M(4)=730\,;\quad
 M(5)=9{,}002\,;\quad
 M(6)=133{,}210\,.
 \]
The sequence of numbers~$M(n)$ is \texttt{A004123} of Sloane's Encyclopedia of Integer Sequences~\cite{OEIS}. We do not know of any induction formula for the numbers $\card\sS(n,k)$, where $n>0$ and $0\leq 2k<n$.
 
The lattices~$\sS(3,0)$ and~$\sS(3,1)$ are represented on the left hand side and the right hand side of Figure~\ref{Fig:S301}, respectively.
\end{example}

\begin{figure}[htb]
  \centering
  \includegraphics[scale=0.5]{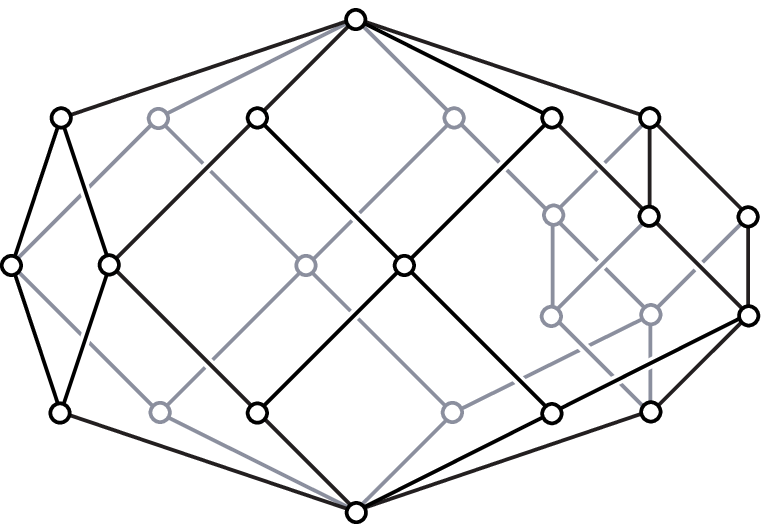}
  \hspace{10mm}
  \includegraphics[scale=0.5]{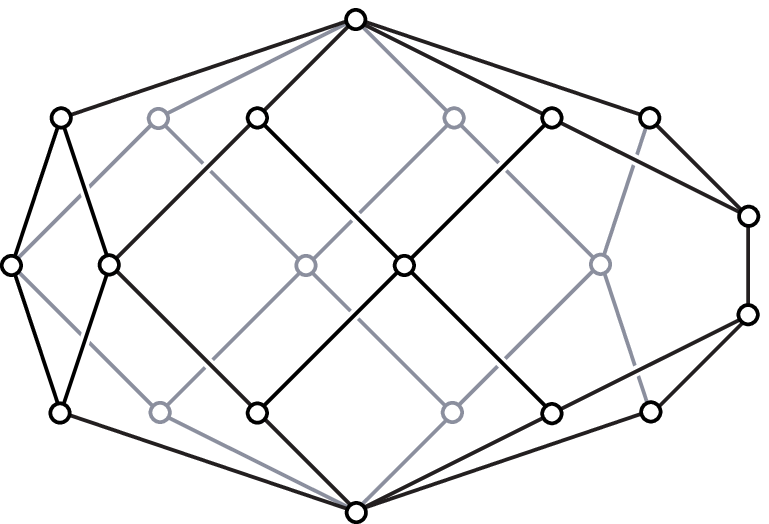}
  \caption{The bipartition-Tamari lattices $\sS(3,0)$ and $\sS(3,1)$}\label{Fig:S301}
\end{figure}

\section{Open problems}\label{S:Pbs}

Our first problem asks for a converse to Theorem~\ref{T:RegeSD}.

\begin{problem}\label{Pb:EmbPbBdedOrth}
Can every finite ortholattice, which is also a bounded homomorphic image of a free lattice, be embedded into~$\Reg(\be)$, for some finite strict ordering~$\be$?
\end{problem}

A variant of Problem~\ref{Pb:EmbPbBdedOrth}, for arbitrary finite ortholattices, is the following.

\begin{problem}\label{Pb:EmbBip}
Can every finite ortholattice be embedded into $\Bip(n)$, for some positive integer~$n$?
\end{problem}

On the opposite side of Problems~\ref{Pb:EmbPbBdedOrth} and~\ref{Pb:EmbBip}, it is natural to state the following problems.

\begin{problem}\label{Pb:EqThRegeBded}
Is there a nontrivial lattice \pup{resp., ortholattice} identity that holds in $\Reg(\be)$ for every finite strict ordering~$\be$?
\end{problem}

\begin{problem}\label{Pb:EqThRege}
Is there a nontrivial lattice \pup{resp., ortholattice} identity that holds in $\Bip(n)$ for every positive integer~$n$?
\end{problem}

Bruns observes in \cite[\S (4.2)]{Bruns76} that the variety of all ortholattices is generated by its finite members (actually, the argument presented there shows that ``variety'' can even be replaced by ``quasivariety''). This shows, for example, that Problems~\ref{Pb:EmbBip} and~\ref{Pb:EqThRege} cannot simultaneously have a positive answer.

\section{Acknowledgment}\label{S:Acknow}
The authors are grateful to William McCune for his
\texttt{Prover9-Mace4} program~\cite{McCune}, to Ralph Freese for his lattice drawing program~\cite{LatDraw}, and to the authors of the \texttt{Graphviz} framework, available online at \url{http://www.graphviz.org/}. Part of this work was completed while the second author was visiting the CIRM in March 2012. Excellent conditions provided by the host institution are greatly appreciated.

\end{document}